\newtheorem{thm}{Theorem}[section]
\newtheorem{prop}[thm]{Proposition}
\newtheorem{lemma}[thm]{Lemma}
\newtheorem{cor}[thm]{Corollary}
\newtheorem{defthm}[thm]{Definition/Theorem}
\theoremstyle{definition}
\newtheorem{notation}[thm]{Notation}
\newtheorem{fact}[thm]{Fact}
\newtheorem{question}[thm]{Question}
\newtheorem{ex}[thm]{Example}
\theoremstyle{remark}
\newtheorem{remark}[thm]{Remark}
\newtheorem{rem}[thm]{Remark}
\newcommand{\ceil}[1]{\left\lceil#1\right\rceil}
\DeclareMathOperator{\HE}{HE}
\renewcommand{\st}{\mathrm{st}}
\DeclareMathOperator{\Crit}{Crit}
\DeclareMathOperator{\Aut}{Aut}
\DeclareMathOperator{\val}{val}
\DeclareMathOperator{\pr}{pr}
\DeclareMathOperator{\indeg}{indeg}
\DeclareMathOperator{\outdeg}{outdeg}
\newcommand{\Z}{\mathbb{Z}}
\newcommand{\C}{\mathbb{C}}
\newcommand{\Q}{\mathbb{Q}}
\newcommand{\A}{\mathbb{A}}
\newcommand{\calA}{\mathcal{A}}
\newcommand{\R}{\mathbb{R}}
\newcommand{\frakX}{\mathfrak{X}}
\newcommand{\boldu}{\mathbf{u}}
\newcommand{\boldx}{\mathbf{x}}
\newcommand{\boldz}{\mathbf{z}}
\renewcommand{\P}{\mathbb{P}}
\newcommand{\M}{\mathcal{M}}
\newcommand{\abs}[1]{\left\lvert#1\right\rvert}
\newcommand{\Mbar}{\overline{\mathcal{M}}}
\newcommand{\Mtilde}{\widetilde{\mathcal{M}}}
\newcommand{\into}{\hookrightarrow}
\newcommand{\deriv}[3][]{\frac{d^{#1} #2}{d{#3}^{#1}}}
\title{Stable curves and chromatic polynomials}
\author[Reinke]{Bernhard Reinke}
\address{Max Planck Institute for Mathematics in the Sciences, Inselstra{\ss}e 22, Leipzig 04103, Germany}
\email{Bernhard.Reinke@mis.mpg.de}
\author[Silversmith]{Rob Silversmith}
\address{Warwick Mathematics Institute, University of Warwick, Coventry CV4 7AL, UK}
\email{Rob.Silversmith@warwick.ac.uk}
\begin{document}

\begin{abstract}
    The intersection numbers of moduli spaces of stable curves $\Mbar_{g,m}$ are well-studied and are known to have rich combinatorial structure. We introduce a natural class of these intersection numbers $\omega_{G,g,m}$ indexed by finite simple graphs $G=(V,E)$. In genus zero, these numbers are closely related to several previously-studied quantities, including maximum likelihood degrees in algebraic statistics, counts of regions of certain hyperplane arrangements, and Kapranov degrees. We give two proofs of a simple closed formula $\omega_{G,g,m}=(-1)^{\abs{V}}\chi_G(-(2g-2+m)),$ where $\chi_G$ is the chromatic polynomial of $G$ --- one proof via intersection theory on moduli spaces of stable curves, and the other using the theory of hyperplane arrangements. We discuss several related questions and speculations, including new candidates for the chromatic polynomial of a directed graph.
\end{abstract}

\maketitle

\section{Introduction}
Let $\Mbar_{g,P}$ denote the moduli space of genus-$g$ stable curves with marked points indexed by a finite set $P$, which has dimension $3g-3+\abs{P}$. For any marked point $p\in P$, there is a corresponding \emph{$\psi$-class}, or \emph{cotangent class}, denoted $\psi_p\in H^2(\Mbar_{g,P})$ (see Section \ref{sec:MgnBasics}). The intersection theory of $\psi$-classes is well-studied, e.g. it is the subject of the Witten conjecture \cite{Witten1991}, proved by Kontsevich \cite{Kontsevich1992} (and later by others \cite{Mirzakhani2007,KazarianLando2007,OkounkovPandharipande2009}). Subsets $P'\subset P$ induce forgetful maps $\pi_{P'}:\Mbar_{g,P}\to\Mbar_{g,P'}$, which forget all marked points not in $P'$. 

Let $G=(V,E)$ be a (nonempty) finite simple graph. In this paper we define, and give a formula for, a natural class of intersection numbers on $\Mbar_{g,P}$ associated to $G$. These are defined by taking products of pullbacks of $\psi$-classes along forgetful maps, where the choices of forgetful maps reflect adjacency in $G$. We first introduce the definition and formula in the simplest special case, which is already interesting (Section \ref{sec:SpecialCase}). The general case and main theorem follow in Section \ref{sec:GeneralCase}.

\subsection{Main definition and theorem, special case}\label{sec:SpecialCase} For $v\in V$, let $N[v]\subseteq V$ denote the closed neighborhood of $v$ --- that is, $v$ and its neighbors. We define an intersection number on the moduli space of genus-zero stable curves, whose marking set is the vertex set $V$ together with three ``extra'' elements $a,b,c$, as follows:
\begin{align}\label{eq:IntersectionNumberFirstDef}
    \omega_G=\int_{\Mbar_{0,V\sqcup\{a,b,c\}}}\prod_{v\in V}\pi_{N[v]\sqcup\{a,b,c\}}^*(\psi_v).
\end{align}
It is natural to ask whether $\omega_G$ is equal to some previously-known graph invariant. Recall the \emph{chromatic polynomial} $\chi_G$, a fundamental object in algebraic graph theory, whose value at a nonnegative integer $x$ is the number of proper vertex colorings of $G$ with $x$ colors. The first case of our main theorem is the following simple-yet-surprising formula:
\begin{thm}[Main theorem, special case]\label{thm:Case03}
    $\omega_G=(-1)^{\abs{V}}\chi_G(-1).$
\end{thm}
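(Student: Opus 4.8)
The plan is to prove $\omega_G=(-1)^{\abs{V}}\chi_G(-1)$ by induction on $\abs{E}$, mirroring the deletion--contraction recursion for the chromatic polynomial. Set $\widetilde\chi_G:=(-1)^{\abs{V}}\chi_G(-1)$. From $\chi_G=\chi_{G\setminus e}-\chi_{G/e}$, together with the fact that $G/e$ has one fewer vertex than $G$, one gets $\widetilde\chi_G=\widetilde\chi_{G\setminus e}+\widetilde\chi_{G/e}$; and $\chi_G(x)=x^{\abs{V}}$ when $E=\varnothing$ gives $\widetilde\chi_G=1$ in that case. (If $v,w$ share a neighbour then $G/e$ is not simple, but $\omega$ and $\chi$ depend only on the underlying simple graph — the moduli construction only ever sees neighbourhood-sets — so this is harmless.) Hence it suffices to establish: (a) $\omega_G=1$ whenever $E=\varnothing$; and (b) $\omega_G=\omega_{G\setminus e}+\omega_{G/e}$ for every edge $e$.

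For (a), the maps $\pi_{\{v,a,b,c\}}$ ($v\in V$) assemble into a morphism $\Phi\colon\Mbar_{0,V\sqcup\{a,b,c\}}\to\prod_{v\in V}\Mbar_{0,\{v,a,b,c\}}\cong(\P^1)^{V}$. Since $\int_{\Mbar_{0,4}}\psi_v=1$, the class $\psi_v$ on $\Mbar_{0,\{v,a,b,c\}}\cong\P^1$ is the class of a point, so $\prod_{v}\pi_{\{v,a,b,c\}}^*\psi_v=\Phi^*([\mathrm{pt}])$; as both spaces have dimension $\abs{V}$ this integrates to $\deg\Phi$. But $\Phi$ restricts on the interior to the open immersion $M_{0,V\sqcup\{a,b,c\}}\into(\P^1)^{V}$ recording the positions of the points indexed by $V$ relative to $a,b,c$, so $\deg\Phi=1$ and $\omega_G=1$.

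For (b), fix $e=vw$, write $P=V\sqcup\{a,b,c\}$ and $A_u:=\pi_{N_G[u]\sqcup\{a,b,c\}}^*\psi_u\in H^2(\Mbar_{0,P})$, so that $\omega_G=\int_{\Mbar_{0,P}}\prod_u A_u$. Only the factors at $v$ and $w$ change under deletion of $e$, and the standard comparison of $\psi$-classes along a forgetful map gives $A_v=A_v^{G\setminus e}+D_v$ and $A_w=A_w^{G\setminus e}+D_w$, where $D_v=\pi_{N_G[v]\sqcup\{a,b,c\}}^*(\delta_{\{v,w\}})$ is an effective sum of boundary divisors, each carrying $\{v,w\}$ on one side (here $\delta_{\{v,w\}}$ denotes the boundary divisor where $v,w$ lie on a rational bubble). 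Expanding, $\omega_G-\omega_{G\setminus e}=\int_{\Mbar_{0,P}}\bigl(A_v^{G\setminus e}D_w+D_vA_w^{G\setminus e}+D_vD_w\bigr)\prod_{u\neq v,w}A_u$, and the task is to identify this with $\omega_{G/e}$. The geometric inputs are: (i) $\delta_{\{v,w\}}\cong\Mbar_{0,V(G/e)\sqcup\{a,b,c\}}$, with the three-pointed $\{v,w,\star\}$-bubble being a point and the node $\star$ playing the role of the contracted vertex $*$; (ii) $A_v|_{\delta_{\{v,w\}}}=A_w|_{\delta_{\{v,w\}}}=0$, since $\pi_{N_G[v]\sqcup\{a,b,c\}}$ contracts the $\{v,w,\star\}$-bubble and $\psi_v$ vanishes on a three-pointed component; and (iii) for $u\neq v,w$ the restriction $A_u|_{\delta_{\{v,w\}}}$ equals, under the identification in (i), the factor $\pi_{N_{G/e}[u]\sqcup\{a,b,c\}}^*\psi_u$ of $\omega_{G/e}$ — a short case check according to whether $u$ is adjacent to $v$, to $w$, to both, or to neither. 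Using (i)--(iii) and the projection formula $i_*(\alpha)\cdot\beta=i_*(\alpha\cdot i^*\beta)$ for $i\colon\delta_{\{v,w\}}\into\Mbar_{0,P}$, together with the fact that $D_v|_{\delta_{\{v,w\}}}$ and $D_w|_{\delta_{\{v,w\}}}$ are (up to sign) pulled-back $\psi$-classes at $\star$ by a further application of the comparison formula, the statement reduces to a finite identity in $H^*(\delta_{\{v,w\}})$ matching the three correction terms with the contracted-vertex factor $\pi_{N_{G/e}[*]\sqcup\{a,b,c\}}^*\psi_*$.

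I expect that last reduction to be the main obstacle: reorganizing $A_v^{G\setminus e}D_w$, $D_vA_w^{G\setminus e}$ and $D_vD_w$ into a single pushforward from $\delta_{\{v,w\}}$ whose restricted class is exactly the contracted-vertex $\psi$-class. The subtlety is that $D_v$ and $D_w$ are themselves sums over subsets of non-neighbours, so their products involve self-intersections of boundary divisors and nested boundary strata, and one must verify that the contributions not supported on $\delta_{\{v,w\}}$ itself cancel against the test class $\prod_{u\neq v,w}A_u$. Everything else — the base case and the reduction to (a) and (b) — is formal. (Alternatively, one could bypass the recursion and prove the identity via the hyperplane-arrangement description of $\Mbar_{0,V\sqcup\{a,b,c\}}$, where $\omega_G$ becomes a count of chambers and the formula follows from Zaslavsky's theorem applied to the graphic arrangement of $G$.)
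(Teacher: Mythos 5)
Your overall strategy is exactly the paper's first proof: induction on $\abs{E}$, base case $\omega_G=1$ for edgeless $G$, and the deletion--contraction identity $\omega_G=\omega_{G\setminus e}+\omega_{G/e}$ obtained by expanding $\pi_{N[u]\sqcup\{a,b,c\}}^*\psi_u=\pi_{(N[u]\setminus\{w\})\sqcup\{a,b,c\}}^*\psi_u+\pi_{N[u]\sqcup\{a,b,c\}}^*D_{uw}$ and interpreting the correction terms as a pushforward from the boundary divisor where $u,w$ bubble off. Your base case is complete and correct: identifying $\prod_v\pi_{\{v,a,b,c\}}^*\psi_v$ with $\Phi^*([\mathrm{pt}])$ for the birational morphism $\Phi:\Mbar_{0,V\sqcup\{a,b,c\}}\to(\P^1)^V$ is a clean proof of the paper's Fact \ref{fact:BasicCrossRatioDegree}, which is all the base case needs when $(g,m)=(0,3)$.

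The inductive step, however, is only a plan, and you say so yourself (``I expect that last reduction to be the main obstacle''). That reduction is the actual content of the proof, and it is not routine. Concretely: (1) each of $D_v$ and $D_w$ is, by the pullback formula for boundary divisors, a sum $\sum_J D_{\{v,w\}\cup J}$ over subsets $J$ of non-neighbours, and one needs the nonvanishing criterion for Kapranov degrees (the ``Cerberus''/surplus condition, Fact \ref{fact:Surplus}) to show that against the test class $\prod_{u\ne v,w}A_u\,$ only $J=\emptyset$ survives in the cross terms --- this is not a cancellation but a vanishing argument applied factor by factor on the bubbled-off component. (2) The term $D_vD_w$ requires the self-intersection formula for boundary divisors (Fact \ref{fact:ProductOfBoundary}), which produces excess $-\psi$ classes at the node and three combinatorial cases ($J_1=J_2$, $J_1\subsetneq J_2$, $J_2\subsetneq J_1$); the $-\psi$ term on the small component must then be re-expanded as a sum of boundary strata (Fact \ref{fact:WritePsiAsBoundary}) before the surplus criterion applies again. (3) The six resulting contributions cancel in a specific pattern (in the paper's notation, $\eqref{eq:FirstTermContribution}+\eqref{eq:ThirdTermContribution3}=\eqref{eq:SecondTermContribution}+\eqref{eq:ThirdTermContribution2}=-\eqref{eq:ThirdTermContribution11}$), leaving exactly $(\iota_{uw})_*$ of the contracted-vertex class $\pi_{N_{G/e}[\star]\sqcup\{a,b,c\}}^*\psi_\star$ times the other factors; finding this pattern is where the work lies. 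A further small inaccuracy: your input (ii), the vanishing of $A_v$ and $A_w$ restricted to $D_{\{v,w\}}$, is true but is not the relevant computation --- your expansion requires the restrictions of $A_v^{G\setminus e}$ and $A_w^{G\setminus e}$, which do \emph{not} vanish there (they restrict to $\psi_\star$-type classes and supply two of the six contributions). The closing suggestion to instead count chambers via Zaslavsky's theorem is the paper's second proof, but there the hard step is precisely showing that $\omega_G$ equals that chamber count (ruling out boundary intersections), which is again left unaddressed. So the proposal is a correct and well-aimed outline with a complete base case, but the inductive step has a genuine gap where the paper's main computation sits.
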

\begin{ex}\label{ex:ExampleKD}
    We illustrate using the graph $G$ in Figure \ref{fig:ExampleOfMainTheorem}. The intersection number \eqref{eq:IntersectionNumberFirstDef} is equal to 
    \begin{align}\label{eq:ExampleIntersectionNumber}
        \omega_G=\int_{\Mbar_{0,1234abc}}\pi_{1234abc}^*(\psi_1)\cdot\pi_{123abc}^*(\psi_2)\cdot\pi_{123abc}^*(\psi_3)\cdot\pi_{14abc}^*(\psi_4)=12.
    \end{align}
    The value of \eqref{eq:ExampleIntersectionNumber} can be computed using the basic intersection-theoretic properties of $\Mbar_{0,n}$ (see Section \ref{sec:MgnBasics}), or using the implementations \texttt{admcycles} or \texttt{KapranovDegrees} in Sage \cite{admcycles,KapranovDegrees}. 
    \begin{figure}
    \centering
        $G=${\begin{tikzpicture}[baseline=-0.65ex]
        \draw (0,0) node {$\bullet$} node[above] {$1$};
        \draw (150:1) node {$\bullet$} node[left] {$2$};
        \draw (210:1) node {$\bullet$} node[left] {$3$};
        \draw (0:1) node {$\bullet$} node[right] {$4$};
        \draw (0,0)--(150:1)--(210:1)--(0,0)--(1,0);
    \end{tikzpicture}}
    \quad{\LARGE$\leadsto$}\quad$\begin{array}{l}
        N[1]=\{1,2,3,4\}  \\
        N[2]=\{1,2,3\}  \\
        N[3]=\{1,2,3\}  \\
        N[4]=\{1,4\}  \\
    \end{array}$\quad{\LARGE$\leadsto$}\quad$\left.\begin{array}{l}
        \pi_{1234abc}^*(\psi_1)  \\
        \pi_{123abc}^*(\psi_2)  \\
        \pi_{123abc}^*(\psi_3)  \\
        \pi_{14abc}^*(\psi_4)  \\
    \end{array}\quad\right\}$
    \begin{tabular}{c}
    multiply and integrate\\over $\Mbar_{1234abc}$
    \end{tabular}
    \caption{Illustrating how a graph yields an intersection number (special case)}
    \label{fig:ExampleOfMainTheorem}
\end{figure}
$G$ has chromatic polynomial $\chi_G(x)=x^4 - 4x^3 + 5x^2 - 2x,$ and indeed $(-1)^4\cdot\chi_G(-1)=12=\omega_G$, as expected from Theorem \ref{thm:Case03}.
\end{ex}
\begin{rem}
    Adding the extra marked points $a,b,c$ is natural for two reasons. First, $\Mbar_{0,V\sqcup\{a,b,c\}}$ has dimension $\abs{V}$, so the product in \eqref{eq:IntersectionNumberFirstDef} is in the correct degree to integrate. Second, $\Mbar_{0,N[v]\sqcup\{a,b,c\}}$ has dimension at least 1 by construction, so the pullbacks $\pi_{N[v]\sqcup\{a,b,c\}}^*(\psi_v)$ are well-defined and nonzero.
\end{rem}

\subsection{Main definition and theorem, general case}\label{sec:GeneralCase} Our main theorem generalizes Theorem \ref{thm:Case03} in two natural ways --- varying the genus of the stable curve, and varying the number of ``extra'' marked points (i.e. $a,b,c$ above). Fix nonnegative integers $g,m$ such that $2g-2+m>0$, and let $M$ be an arbitrary $m$-element set. We define a cohomology class 
\begin{align}\label{eq:PsiDef}
    \Psi_{G,g,m}=\prod_{v\in V}\pi_{N[v]\cup M}^*(\psi_v)\in H^*(\Mbar_{g,V\sqcup M}).
\end{align}
Note that $\Psi_{G,g,m}$ is a codimension-$\abs{V}$ class on a $(3g-3+\abs{V}+m)$-dimensional space; it is therefore natural to define an intersection \emph{number} by 
\begin{align}\label{eq:OmegaDef}
    \omega_{G,g,m}&=\int_{\Mbar_{g,V\sqcup M}}\Psi_{G,g,m}\cdot\pi_M^*([pt]),
\end{align}
where $[pt]\in H^*(\Mbar_{g,M})$ is the class of a point. 
We also define the special case\footnote{It is natural to define (by convention) $\pi_M^*([pt])=24$ when $(g,m)=(1,0)$, since for all $(g,m)$ with $2g-2+m > 0$ and for all $a\in M$ we have $\pi_M^*([pt])=24^g\cdot g!\cdot\pi_M^*(\psi_a^{3g-3+m})$.}:
\begin{align}\label{eq:Omega10}
    \omega_{G,1,0}=24\cdot\int_{\Mbar_{1,V}}\Psi_{G,1,0}.
\end{align}

Our main result states that the numbers $\omega_{G,g,m}$ recover, up to sign, the values of the chromatic polynomial $\chi_G$ at \emph{all negative integer inputs} (and, if $(g,m)=(1,0)$, the coefficient of the linear term of $\chi_G$):
\begin{thm}[Main theorem, general case]\label{thm:main}
    We have the formula
    \begin{align}\label{eq:MainEqn1}
        \omega_{G,g,m}=\begin{cases}
            (-1)^{\abs{V}}\chi_G(-(2g-2+m))&(g,m)\ne(1,0)\\
            (-1)^{\abs{V}-1}\left.\deriv{}{x}\chi_G(x)\right|_{x=0}&(g,m)=(1,0).
        \end{cases}
    \end{align}
\end{thm}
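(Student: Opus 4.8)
The plan is to prove \eqref{eq:MainEqn1} by the deletion--contraction recursion for the chromatic polynomial. Write $d=2g-2+m$, and for a finite simple graph $H$ let $\chi^{\circ}_{H}$ denote the corresponding right-hand side of \eqref{eq:MainEqn1} (with $H$ for $G$). Using $\chi_{K_1}(x)=x$, $\chi_{H\sqcup K_1}=x\cdot\chi_H$, the recursion $\chi_H=\chi_{H-e}-\chi_{H/e}$, and the fact that $H/e$ has one fewer vertex than $H$ (so that the $(-1)^{\abs{V}}$ absorbs the sign flip), one checks that $H\mapsto\chi^{\circ}_{H}$ is the \emph{unique} invariant of finite simple graphs satisfying (a) $\chi^{\circ}_{K_1}=d$ (resp.\ $=1$ when $(g,m)=(1,0)$); (b) $\chi^{\circ}_{H\sqcup K_1}=d\cdot\chi^{\circ}_{H}$; and (c) $\chi^{\circ}_{H}=\chi^{\circ}_{H-e}+\chi^{\circ}_{H/e}$ for every edge $e$. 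It therefore suffices to prove that $H\mapsto\omega_{H,g,m}$ satisfies (a), (b), (c); the theorem then follows by induction on $\abs{E}$, with (b) reducing the edgeless case to $K_1$.

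Properties (a) and (b) are short. For (a), $\Psi_{K_1,g,m}=\psi_v$ on $\Mbar_{g,\{v\}\sqcup M}$, and pushing forward along $\pi_M$ (which forgets $v$) replaces $\psi_v$ by Mumford's class $\kappa_0=2g-2+m$, so $\omega_{K_1,g,m}=\int_{\Mbar_{g,M}}\kappa_0\cdot[pt]=d$, and similarly $\omega_{K_1,1,0}=24\int_{\Mbar_{1,1}}\psi_v=1$. For (b), if $v$ is isolated then $\Psi_{G\sqcup K_1,g,m}=\pi^{*}_{\{v\}\cup M}(\psi_v)\cdot\pi_v^{*}(\Psi_{G,g,m})$ with $\pi_v$ the map forgetting $v$, so the projection formula gives $\omega_{G\sqcup K_1,g,m}=\int_{\Mbar_{g,V\sqcup M}}\bigl((\pi_v)_{*}\pi^{*}_{\{v\}\cup M}\psi_v\bigr)\cdot\Psi_{G,g,m}\cdot\pi_M^{*}([pt])$. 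Iterating the comparison formula $\pi_q^{*}\psi_i=\psi_i-D_{i,q}$ yields $\pi^{*}_{\{v\}\cup M}\psi_v=\psi_v-\sum_{\emptyset\neq S\subseteq V}D_{v,S}$, where $D_{v,S}$ is the boundary divisor carrying $\{v\}\cup S$ on a rational bubble; pushing forward along $\pi_v$ sends $\psi_v\mapsto\kappa_0=2g-2+\abs{V}+m$, sends each $D_{v,\{w\}}\mapsto 1$, and sends each $D_{v,S}$ with $\abs{S}\geq 2$ to $0$ (for dimension reasons). Hence $(\pi_v)_{*}\pi^{*}_{\{v\}\cup M}\psi_v=(2g-2+\abs{V}+m)-\abs{V}=d$, so $\omega_{G\sqcup K_1,g,m}=d\cdot\omega_{G,g,m}$; when $(g,m)=(1,0)$ this gives $0$, matching $\chi_{G\sqcup K_1}(0)=0$.

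The crux is the edge recursion (c). Fix $e=\{u,v\}\in E$. Deleting $e$ enlarges only the complemented neighbourhoods $\bar N_G(u)=V\setminus N_G[u]$ and $\bar N_G(v)$, each by one vertex, so the comparison formula gives $A^{G}_{x}=A^{G-e}_{x}+E_{x}$ for $x\in\{u,v\}$, where $A^{H}_{x}:=\pi^{*}_{N_H[x]\cup M}\psi_x$ and $E_u=\sum_{S\subseteq\bar N_G(u)}[D_{\{u,v\}\cup S}]$ (the boundary divisor with $\{u,v\}\cup S$ on a rational bubble), similarly $E_v$; all remaining factors coincide, so with $R:=\prod_{w\neq u,v}A^{G}_{w}$ one gets, after rearranging,
\[
  \Psi_{G,g,m}-\Psi_{G-e,g,m}=\bigl(E_uA^{G}_{v}+A^{G}_{u}E_v-E_uE_v\bigr)\cdot R .
\]
I would show that the integral of the right-hand side against $\pi_M^{*}([pt])$ equals $\omega_{G/e,g,m}$. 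Each divisor $D_{\{u,v\}\cup S}$ is a product $\Mbar_{0,S\cup\{u,v,\star\}}\times\Mbar_{g,(V\setminus(\{u,v\}\cup S))\cup\{\bullet\}\cup M}$, and restricting the factors of the integrand to it produces only pulled-back $\psi$-classes on the bubble factor. A count then kills the two mixed terms and every contribution with $S\neq\emptyset$: on a bubble $\Mbar_{0,S\cup\{u,v,\star\}}$ of dimension $\abs{S}$ one accumulates more than $\abs{S}$ pulled-back $\psi$-classes — one for $u$ or $v$ and one for each vertex of $S$ — and if some $w\in S$ has no neighbour in $\{u,v\}\cup S$, then all of these classes are pulled back along the map forgetting $w$, which again forces the product to vanish. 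What survives is $-[D_{\{u,v\}}]^{2}\cdot R$, and here $D_{\{u,v\}}\cong\Mbar_{g,(V/e)\sqcup M}$ (with the new vertex of $G/e$ playing the role of the node $\bullet$); using $[D_{\{u,v\}}]^{2}=\iota_{*}(-\psi_{\star}-\psi_{\bullet})$ with $\psi_{\star}=0$ on $\Mbar_{0,3}$, that $R|_{D_{\{u,v\}}}=\prod_{w\neq u,v}\pi^{*}_{N_{G/e}[w]\cup M}\psi_{w}$ (contracting the bubble turns $N_G[w]$ into $N_{G/e}[w]$), and that $\psi_{\bullet}$ together with the surviving corrections rebuilds the missing factor $\pi^{*}_{N_{G/e}[\bullet]\cup M}\psi_{\bullet}$, the integrand becomes $\Psi_{G/e,g,m}$. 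Since $\pi_M$ restricted to $D_{\{u,v\}}$ is precisely the forgetful map defining $\omega_{G/e,g,m}$, this gives $\omega_{G,g,m}-\omega_{G-e,g,m}=\omega_{G/e,g,m}$.

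The main obstacle is the bookkeeping inside (c): one must verify carefully that the boundary contributions cancel and reassemble into \emph{exactly} $\Psi_{G/e,g,m}$, uniformly in $G$ — the vanishing of the mixed and higher-bubble terms via degree counts on the rational bubbles, and the precise identification of $\psi_{\bullet}$ plus corrections with the correct pulled-back $\psi$-class at the contracted vertex. Everything else (the comparison formula, the projection formula, and $\kappa_0=2g-2+n$) is routine. One alternative worth noting: expand $\Psi_{G,g,m}=\prod_{v}\bigl(\psi_v-\sum_{\emptyset\neq S\subseteq\bar N_G(v)}D_{v,S}\bigr)$ completely, and match the resulting alternating sum of boundary intersection numbers against Whitney's broken-circuit expansion $(-1)^{\abs{V}}\chi_G(-d)=\sum_{F}d^{\,\abs{V}-\abs{F}}$ over broken-circuit-free edge subsets $F$; this is morally the hyperplane-arrangement route and has the same combinatorial core.
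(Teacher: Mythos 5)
Your overall strategy --- induction on edges via deletion--contraction, with an edgeless base case --- is the same as the paper's first proof. Steps (a) and (b) are correct: the paper instead evaluates the edgeless case directly by writing $\pi_M^*([pt])$ as a sum of zero-dimensional boundary strata, but your route through $(\pi_v)_*\pi_{\{v\}\cup M}^*\psi_v=\kappa_0-\abs{V}=2g-2+m$ is a legitimate alternative (it is essentially the computation the paper uses for Proposition \ref{prop:SourceRecursion}). Your regrouping of the edge step with $A^G_u,A^G_v$ rather than $A^{G-e}_u,A^{G-e}_v$ in the mixed terms is also a genuine and correct simplification: since $u,v\in N_G[v]$, the class $A^G_v$ restricts on every $D_{\{u,v\}\cup S}$ to a $\psi$-pullback \emph{on the bubble}, and a degree count combined with Fact \ref{fact:Surplus} does kill both mixed terms --- though note your stated count is imprecise, because a vertex $w\in S$ contributes a class to the bubble only when it has a neighbour in $\{u,v\}\cup S$, and in the remaining configurations you genuinely need the Cerberus-type vanishing of Fact \ref{fact:Surplus}, not dimension alone.

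The gap is in the treatment of $E_uE_v$. The claim that every contribution with $S\neq\emptyset$ dies, so that ``what survives is $-[D_{\{u,v\}}]^2\cdot R$'', is false, and the recursion fails if those terms are dropped. Concretely, let $G$ be a single edge $\{u,v\}$ plus an isolated vertex $w_0$, with $(g,m)=(0,3)$: then $-\int_{\Mbar_{0,6}} D_{uv}^2\,R\,\pi_M^*([pt])=\int_{\Mbar_{0,5}}\psi_\star\cdot\pi_{w_0abc}^*\psi_{w_0}=2$, whereas $\omega_{G/e,0,3}=1$; the discrepancy is accounted for by $D_{uv}\cdot D_{uvw_0}$ (which contributes $+1$ and occurs twice in $E_uE_v$) and by $D_{uvw_0}^2$ (which contributes $-1$). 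In general, the surviving $S\neq\emptyset$ terms are precisely what converts the naked $\psi_\star$ arising from $-D_{uv}^2=\iota_*(\psi_{\star})$ (the bubble $\psi$ vanishing on $\Mbar_{0,3}$) into the required factor $\pi_{N_{G/e}[\star]\cup M}^*\psi_\star=\psi_\star-\sum_I D_I$; they vanish only when $\star$ has no non-neighbour in $G/e$. Your closing sentence does allude to ``surviving corrections,'' but that contradicts the vanishing claim two sentences earlier, and identifying exactly which boundary terms survive and showing that they assemble into $\sum_I D_I$ is the real content of the inductive step --- in the paper these are the nonzero contributions \eqref{eq:ThirdTermContribution12Rewrite1}, \eqref{eq:ThirdTermContribution2}, \eqref{eq:ThirdTermContribution3}, which are isolated using Facts \ref{fact:WritePsiAsBoundary}, \ref{fact:ProductOfBoundary} and \ref{fact:Surplus} and then recombined via Fact \ref{fact:PsiPullback}. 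As written, the crux of (c) is not established.
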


Note that $\omega_{G,0,3}=\omega_G$, and specializing Theorem \ref{thm:main} to $(g,m)=(0,3)$ recovers Theorem \ref{thm:Case03}. The exceptional case $(g,m)=(1,0)$ gives (up to sign) the coefficient of the linear term of $\chi_G$, which has several interpretations --- e.g. it is equal to $T(1,0)$, where $T(x,y)$ is the Tutte polynomial of $G$.

\subsection{Interpretation of the main theorem and its proofs} 
We give two proofs of Theorem \ref{thm:main}, which are substantially different from each other, and highlight different aspects of the geometry of the problem.

The first proof shows that both sides of \eqref{eq:MainEqn1} satisfy the same recursion and agree in initial cases, and hence must agree for all graphs $G$. This recursion, called the deletion-contraction formula, is a common tool in graph theory. Proving that (a) the initial cases agree and (b) the right side of \eqref{eq:MainEqn1} satisfies the deletion-contraction formula both require nontrivial geometric computations in the cohomology ring of $\Mbar_{g,n}$. We give a more detailed outline of the necessary tools and ideas in Section \ref{sec:ProofOutline}, and carry out the proof in Section \ref{sec:Proof}.

As is often the case for combinatorial identities, it is enlightening to find a ``combinatorial proof'' in addition to a recursive one. Our second proof provides an enumerative interpretation to the left side of \eqref{eq:MainEqn1}, at least when $g=0$. The roots of this interpretation are in the theory of hyperplane arrangements and maximum likelihood estimation --- $\omega_{G,0,m}$ counts critical points of a certain function $F$ defined on a hyperplane arrangement complement. Such critical points comprise the intersection of the vanishing loci of the logarithmic derivatives of $F$, and we establish the above enumerative interpretation by identifying a representative of each classes $\pi_{N[v]\sqcup M}^*(\psi_v)$ with the vanishing locus of one logarithmic derivative. (These loci lie in different ambient spaces, and the most technical part of the paper is making this identification rigorous.) To prove \eqref{eq:MainEqn1}, we then adapt classical theorems of Stanley, Zaslavsky, and Varchenko, which give interpretations of the right side of \eqref{eq:MainEqn1} in terms of hyperplanes. We outline this proof in detail in Section \ref{sec:MLDegrees}, and carry out the proof in Section \ref{sec:hyperplanes}.

Theorem \ref{thm:main} raises many interesting questions about the connection between algebraic graph theory and the tautological ring of the moduli space of curves. We discuss some of these in Section \ref{sec:FurtherQuestions}. We also give there a combinatorial application --- the construction \eqref{eq:OmegaDef} generalizes naturally to directed graphs, and using this, we define natural notions of the chromatic polynomial of a directed graph that do not appear in the literature. The existence of these polynomials raises many interesting questions, foremost among them whether they admit an enumerative interpretation. (See Question \ref{q:DigraphQuestions}.) Details about these constructions are in Section \ref{sec:Digraphs}.

\subsection{Outline of the first proof: Intersection theory on moduli spaces of curves}\label{sec:ProofOutline} 

The first proof proceeds by induction on the number of edges in $G$. The base case, proven in Section \ref{sec:BaseCase}, states that when $G$ has no edges --- and hence has chromatic polynomial $\chi_G(x)=x^{\abs{V}}$ --- we have
\begin{align}\label{eq:BaseCase}
    \omega_{G,g,m}=\begin{cases}(2g-2+m)^{\abs{V}}&(g,m,\abs{V})\ne(1,0,1)\\1&(g,m,\abs{V})=(1,0,1).\end{cases}
\end{align}
The key idea behind the proof of \eqref{eq:BaseCase} is to represent $[pt]\in H^*(\Mbar_{g,M})$ as the class of a zero-dimensional \emph{boundary stratum}, which allows us to express $\pi_M^*([pt])$ as an explicit sum of boundary strata.

For the inductive step (Section \ref{sec:InductiveStep}), we use the \emph{deletion-contraction recursion} familiar in graph theory. Let $e$ be an edge of $G$, let $G\setminus e$ denote the graph obtained by deleting $e$, and let $G/e$ denote the (simple) graph obtained by contracting $e$ (and removing loops and doubled edges). It is well-known, and elementary to show, that the chromatic polynomial satisfies, for any edge $e$ of $G$, the recursion 
\begin{align*}
    \chi_G=\chi_{G\setminus e}-\chi_{G/e},
\end{align*}
or equivalently
\begin{align}\label{eq:DeletionContractionChromatic}
    (-1)^{\abs{V}}\chi_G=(-1)^{\abs{V}}\chi_{G\setminus e}+(-1)^{\abs{V}-1}\chi_{G/e},
\end{align}
where we note that $\abs{V}-1$ is the number of vertices in $G/e.$ The main part of the proof establishes that the intersection numbers $\omega_{G,g,m}$ satisfy the \emph{same} recursion as $(-1)^{\abs{V}}\chi_G$, i.e. \begin{align}\label{eq:OmegaDeletionContraction}
    \omega_{G,g,m}=\omega_{G\setminus e,g,m}+\omega_{G/e,g,m},
\end{align}
which, together with the base case, implies Theorem \ref{thm:main}.

We now explain some of the geometry behind \eqref{eq:OmegaDeletionContraction}. There are three intersection numbers appearing in \eqref{eq:OmegaDeletionContraction}. Two of them (namely $\omega_{G,g,m}$ and $\omega_{G\setminus e,g,m}$) come from graphs on the same vertex set $V=V(G)=V(G\setminus e)$, and hence the $\omega_{G,g,m}$ and $\omega_{G\setminus e,g,m}$ are both defined as integrals are over the same moduli space $\Mbar_{g,V\sqcup M}$. The third (namely $\omega_{G/e,g,m}$) comes from a graph with one fewer vertex --- specifically, let $V'=V(G/e)=(V\setminus\{u,w\})\cup\{\star\}$, where $\star$ is the vertex to which $e$ was contracted. The intersection number $\omega_{G/e,g,m}$ is an integral over $\Mbar_{g,V'\sqcup M}$. Observe that we have a rather natural embedding $\iota:\Mbar_{g,V'\sqcup M}\into\Mbar_{g,V\sqcup M}$, which takes a $V'\sqcup M$-marked curve and glues on, at $\star$, a genus-zero bubble with markings $u$ and $w$. 
Pushing forward via this embedding gives us a way to compare cohomology classes on $\Mbar_{g,V\sqcup M}$ with classes on $\Mbar_{g,V'\sqcup M}$. Indeed, by a somewhat involved computation using geometric properties of $\psi$-classes, we show
\begin{align}\label{eq:ClassDeletionContraction}
    \Psi_{G,g,m}(G)\cdot\pi_M^*([pt])=\Psi_{G\setminus e,g,m}\cdot\pi_M^*([pt])+\iota_*(\Psi_{G/e,g,m}\cdot\pi_M^*([pt]))\in H^*(\Mbar_{g,V\sqcup M}).
\end{align}
The numerical identity \eqref{eq:OmegaDeletionContraction} follows from the cohomological identity \eqref{eq:ClassDeletionContraction} by integrating.

\begin{remark}
    The genus-zero intersection numbers $\omega_{G,0,m}$ are ``Kapranov degrees'' in the sense of \cite{BrakensiekEurLarsonLi2023}. Indeed, the recursion for Kapranov degrees in \cite[Thm. B]{BrakensiekEurLarsonLi2023} can be used to give an alternative, purely combinatorial, proof of \eqref{eq:OmegaDeletionContraction} when $g=0$.
\end{remark}

\subsection{Outline of the second proof: Oriented graphs,  hyperplane arrangements and maximum likelihood degrees}\label{sec:MLDegrees} In this section we explain how Theorem \ref{thm:main} fits naturally into an existing body of results in combinatorics and algebra, which is closely related to maximum likelihood estimation in algebraic statistics. From these connections, we ultimately obtain a second proof of Theorem \ref{thm:main}. We begin the discussion by recalling two classical characterizations of evaluations of $\chi_G$ at negative values.

\begin{thm}[{Stanley, \cite[Thm.~1.2]{Stanley1973}}]
\label{thm:Stanley}
For a simple graph $G = (V,E)$, the value $(-1)^{|V|} \chi_G(-1)$ is equal to the number of acyclic orientations of $G$. More generally, the value $(-1)^{|V|} \chi_G(-k)$ is equal to the number of pairs $(\sigma, O)$, where $\sigma : V \rightarrow \set{1, \dots, k}$ and $O$ is an acyclic orientation of $G$ subject to the condition that if $u \rightarrow w$ in the orientation $O$, then $\sigma(u) \geq \sigma(w)$.
\end{thm}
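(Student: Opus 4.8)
The plan is to prove the general statement and recover the first sentence by specializing to $k=1$, where the unique map $\sigma\colon V\to\set{1}$ is compatible with every orientation, so the count collapses to the number of acyclic orientations of $G$. Fix a positive integer $k$, write $\bar\chi_G$ for the number of pairs $(\sigma,O)$ described in the theorem, and set $f(H):=(-1)^{\abs{V(H)}}\chi_H(-k)$. Evaluating the deletion--contraction recursion \eqref{eq:DeletionContractionChromatic} at $x=-k$ gives $f(G)=f(G\setminus e)+f(G/e)$ for every edge $e$ (using, as noted there, that $\abs V-1$ is the number of vertices of $G/e$), and when $G$ has no edges both $\bar\chi_G$ and $f(G)$ equal $k^{\abs V}$, since the empty orientation is compatible with every $\sigma$. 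Thus it suffices to show that $\bar\chi_G$ obeys the \emph{same} recursion, $\bar\chi_G=\bar\chi_{G\setminus e}+\bar\chi_{G/e}$; the theorem then follows by induction on $\abs E$.

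To establish this recursion, fix an edge $e=\{u,w\}$ and consider the restriction map $(\sigma,O)\mapsto(\sigma,O|_{G\setminus e})$ from compatible pairs on $G$ to compatible pairs on $G\setminus e$. I would show it is surjective with every fiber of size one, \emph{except} that the fiber over $(\sigma,O')$ has size two precisely when $\sigma(u)=\sigma(w)$ and $O'$ contains no directed path joining $u$ and $w$. Indeed, if $\sigma(u)\ne\sigma(w)$ then exactly one orientation of $e$ is $\sigma$-compatible, and it cannot close a directed cycle, since the $\sigma$-values are non-increasing along any directed path and such a cycle would force a path from the smaller value up to the larger one; if $\sigma(u)=\sigma(w)$ then both orientations of $e$ are compatible, and acyclicity forbids exactly the one (at most one, as $O'$ is acyclic) that would close a directed $u$--$w$ path. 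Consequently $\bar\chi_G-\bar\chi_{G\setminus e}$ equals the number of compatible pairs $(\sigma,O')$ on $G\setminus e$ with $\sigma(u)=\sigma(w)$ and no directed path between $u$ and $w$ in $O'$.

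It remains to biject these pairs with the compatible pairs on $G/e$. Given such a pair, contract $e$, assign the common value $\sigma(u)=\sigma(w)$ to the new vertex $\star$, and push $O'$ forward. One checks that parallel edges created at $\star$ are oriented consistently --- otherwise $O'$ would contain $u\to x\to w$ or $w\to x\to u$, a forbidden path --- so the pushforward is a genuine orientation of $G/e$, and it is acyclic because a directed cycle through $\star$ lifts to a directed walk in $G\setminus e$ from $u$ or $w$ to $u$ or $w$ with at least one interior vertex, hence either a directed cycle in $O'$ or a forbidden $u$--$w$ path. The inverse map ``un-contracts'', orienting each edge of $G\setminus e$ incident to $u$ or $w$ the way the corresponding edge at $\star$ is oriented in $G/e$; the same two observations show its output lies in the prescribed set. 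This gives $\bar\chi_G=\bar\chi_{G\setminus e}+\bar\chi_{G/e}$ and completes the induction.

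I expect the main obstacle to be the middle step --- precisely cataloguing the fibers of the restriction map when $\sigma(u)=\sigma(w)$ and matching ``fiber of size two'' exactly with ``no directed $u$--$w$ path'' --- while the remaining ingredients are the standard chromatic recursion and short verifications. A more concise alternative, at the cost of invoking an external theorem, is to decompose proper $k$-colorings by their induced acyclic orientation to obtain $\chi_G(k)=\sum_O\bar\Omega_{P_O}(k)$, and similarly $\bar\chi_G=\sum_O\Omega_{P_O}(k)$, where $O$ ranges over acyclic orientations and $P_O$ is the transitive closure of $O$; the claim then follows from order-polynomial reciprocity $\bar\Omega_P(k)=(-1)^{\abs P}\Omega_P(-k)$ together with $\Omega_P=\Omega_{P^{\mathrm{op}}}$.
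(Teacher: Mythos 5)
The paper does not prove this statement at all --- it is imported verbatim as Stanley's theorem \cite[Thm.~1.2]{Stanley1973}, with only a pointer to the Greene--Zaslavsky bijection with regions of the graphical arrangement as an alternative route in the $k=1$ case. So there is no ``paper proof'' to compare against; what you have written is essentially Stanley's original argument, and it is correct. Your reduction is sound: the base case $k^{\abs V}$ matches $(-1)^{\abs V}(-k)^{\abs V}$, the sign bookkeeping in \eqref{eq:DeletionContractionChromatic} is handled correctly (using that $G/e$ has $\abs V-1$ vertices), and the combinatorial heart --- that the fiber of the restriction map over $(\sigma,O')$ has size $2$ exactly when $\sigma(u)=\sigma(w)$ and $O'$ has no directed path between $u$ and $w$, and size $1$ otherwise --- is argued correctly, including the observation that when $\sigma(u)>\sigma(w)$ the forced orientation $u\to w$ cannot close a cycle because $\sigma$ is non-increasing along directed paths. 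You also caught the one genuinely delicate point in the contraction bijection, namely that a vertex $x$ adjacent to both $u$ and $w$ must see its two edges oriented consistently toward or away from the pair (otherwise a forbidden $u$--$w$ path of length two exists), so that the pushforward to the simple graph $G/e$ is well defined; the acyclicity of the pushforward and the inverse ``un-contraction'' are likewise justified (note that since $G$ is simple, $u$ and $w$ are non-adjacent in $G\setminus e$, so every lifted closed walk through $\star$ has nonempty segments). Your closing alternative via order-polynomial reciprocity is also a valid classical route, at the cost of importing $\bar\Omega_P(k)=(-1)^{\abs P}\Omega_P(-k)$.
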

The second characterization is in terms of hyperplane arrangements. 
Recall that for a simple graph $G=(V,E)$, we associate its \emph{real graphical arrangement} $\mathcal{A}_{\R}(G)\subset\R^V$, which is the union of the hyperplanes $$H_e:=\{\boldz = (z_v)_{v \in V}\in\R^V:z_v=z_w\}$$ over all edges $e=\{v,w\}\in E$. 
Zaslavsky proved the following (together with a generalization to arbitrary hyperplane arrangements\footnote{Las Vergnas gave a further generalization involving oriented matroids \cite{LasVergnas1980}.}):
\begin{thm}[{Zaslavsky, \cite[Thm. A]{Zaslavsky1975}}]
\label{thm:Zaslavsky}
The number of regions of $\calA_{\R}(G)$ (i.e. the number of connected components of $\R^V\setminus\mathcal A_{\R}(G)$) is precisely $(-1)^{|V|} \chi_G(-1)$.
\end{thm}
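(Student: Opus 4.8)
The plan is to reduce to Stanley's Theorem~\ref{thm:Stanley}, which we may assume, by exhibiting an explicit bijection between the regions of $\calA_{\R}(G)$ and the acyclic orientations of $G$. First I would define the map: a point $z$ in the complement $\R^V\setminus\calA_{\R}(G)$ satisfies $z_v\neq z_w$ for every edge $\{v,w\}\in E$, so it is natural to orient each such edge from its larger endpoint to its smaller one, and one checks the resulting orientation is acyclic because along any directed cycle the coordinates would have to strictly decrease and yet return to their starting value. The function $z\mapsto z_v-z_w$ is continuous and nowhere zero on $\R^V\setminus\calA_{\R}(G)$, hence has constant sign on each region, so this assignment descends to a map from regions to acyclic orientations. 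Next I would check it is a bijection: for injectivity, a region is precisely the (convex, hence connected) locus on which the sign vector $\bigl(\operatorname{sign}(z_v-z_w)\bigr)_{\{v,w\}\in E}$ is fixed, and that sign vector is exactly the data of the orientation; for surjectivity, given an acyclic orientation I would exhibit a preimage by choosing a topological sort $\sigma\colon V\hookrightarrow\{1,\dots,|V|\}$ and taking the point $(\sigma(v))_{v\in V}\in\R^V$. Combining the bijection with the first assertion ($k=1$) of Theorem~\ref{thm:Stanley} then gives that the number of regions is $(-1)^{|V|}\chi_G(-1)$.

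A self-contained alternative, not using Theorem~\ref{thm:Stanley}, is induction on $|E|$ via deletion--contraction, in the spirit of the paper's first proof. The base case $E=\varnothing$ is immediate, since the empty arrangement has a single region while $\chi_G(x)=x^{|V|}$ gives $(-1)^{|V|}\chi_G(-1)=1$. For the inductive step I would fix an edge $e=\{u,w\}$ and invoke the standard ``adding-a-hyperplane'' identity $r(\calA)=r(\calA\setminus H_e)+r(\calA^{H_e})$, where $r(-)$ denotes the number of regions and $\calA^{H_e}$ is the arrangement that the remaining hyperplanes induce on $H_e$. Because $G$ is simple, $\calA_{\R}(G)\setminus H_e=\calA_{\R}(G\setminus e)$, and the crucial point is to identify $\calA_{\R}(G)^{H_e}$, viewed inside $H_e=\{z_u=z_w\}\cong\R^{V(G/e)}$, with $\calA_{\R}(G/e)$. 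Granting this, the recursion $r(\calA_{\R}(G))=r(\calA_{\R}(G\setminus e))+r(\calA_{\R}(G/e))$ matches the signed chromatic recursion~\eqref{eq:DeletionContractionChromatic} --- using that $G/e$ has $|V|-1$ vertices --- and the induction closes.

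I expect the main obstacle in the first approach to be surjectivity of the region-to-orientation map, i.e.\ showing that every ``consistent'' sign vector is realized by an honest point of $\R^V$; the topological-sort construction is the crux, whereas well-definedness and injectivity are routine. In the second approach the delicate step is the identification $\calA_{\R}(G)^{H_e}\cong\calA_{\R}(G/e)$: one must check that each edge of $G$ incident to $e$ at a single vertex restricts to the corresponding edge-hyperplane of $G/e$, that a parallel pair of edges produced by the contraction restricts to a single hyperplane, and that no edge restricts to all of $H_e$ (which is where simplicity of $G$, i.e.\ the absence of loops, is used). Put differently, one must verify that the restricted arrangement ``sees'' precisely the \emph{simple} graph $G/e$ --- exactly the reason $G/e$ is defined that way, and the reason this route dovetails with the chromatic deletion--contraction recursion.
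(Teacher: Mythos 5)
Both of your routes are correct. Note that the paper does not actually prove this statement --- it is quoted from Zaslavsky --- but your first approach coincides with the bijective argument the paper explicitly attributes to Greene--Zaslavsky and then reuses, in elaborated form, in its own proof of the analogous Theorem~\ref{thm:num_bounded_components}: there the regions (of the enlarged arrangement $\calA_{\R}(G;m)$) are matched with acyclic orientations plus a compatible coloring, and the count is read off from Stanley's Theorem~\ref{thm:Stanley}, exactly as you do in the $k=1$ case. Your handling of the only delicate points is right: convexity of a fixed sign chamber gives injectivity, and a linear extension of the acyclic orientation (ordered so that $\sigma(u)>\sigma(w)$ whenever $u\to w$) gives surjectivity. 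Your second, deletion--contraction argument is a genuinely different and self-contained route (essentially Zaslavsky's original method specialized to graphical arrangements); its payoff is independence from Theorem~\ref{thm:Stanley} and a structural parallel with the recursion \eqref{eq:DeletionContractionChromatic} driving the paper's first proof of the main theorem, at the cost of having to verify the identification $\calA_{\R}(G)^{H_e}\cong\calA_{\R}(G/e)$, whose subtleties (collapsing of parallel hyperplanes, absence of loops) you have correctly isolated.
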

Greene and Zaslavsky \cite[Lem. 7.1]{GreeneZaslavsky} later gave a natural bijection between regions of $\calA_{\R}(G)$ and acyclic orientations of $G$, giving an alternate proof of the above theorem of Stanley. 

In fact, one may characterize arbitrary evaluations of $\chi_G$ at nonpositive integer values in terms of hyperplane arrangements. Fix an integer $m\ge3$, and let $\mathcal{A}_{\R}(G;m)\subset\R^V$ be the hyperplane arrangement obtained from $\mathcal{A}_{\R}(G)$ by adding\footnote{In this definition, the set $\{0,\ldots,m-2\}$ could be replaced with an arbitrary $(m-1)$-element set of real numbers, and everything that in this section would still hold. We have made a concrete choice for convenience in Section \ref{sec:hyperplanes}.} the hyperplanes $\{\boldz\in\R^V:z_v=i\}$ for every $v\in V$ and $i\in\{0,\ldots,m-2\}$. 
In Section \ref{sec:hyperplanes} we prove:

\begin{thm}
    $\calA_{\R}(G;m)$ has exactly $(-1)^{|V|} \chi_G( - (m - 2))$ bounded regions.
    \label{thm:num_bounded_components}
\end{thm}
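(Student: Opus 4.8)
The plan is to identify the characteristic polynomial of $\calA_{\R}(G;m)$ with a shift of the chromatic polynomial $\chi_G$, and then quote Zaslavsky's enumeration of bounded regions. Since $\calA_{\R}(G;m)$ contains the $\abs{V}$ hyperplanes $\{z_v=0\}$, whose normal vectors $\{e_v\}_{v\in V}$ span $\R^V$, the arrangement is essential of rank $\abs{V}$. Zaslavsky's theorem on bounded regions (the companion result to Theorem~\ref{thm:Zaslavsky} in \cite{Zaslavsky1975}) then gives that the number of bounded regions of $\calA_{\R}(G;m)$ equals $(-1)^{\abs{V}}\chi_{\calA_{\R}(G;m)}(1)$, where $\chi_{\calA_{\R}(G;m)}(t)$ denotes the characteristic polynomial of the arrangement. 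It therefore suffices to establish the polynomial identity
\begin{align}\label{eq:CharPolyShift}
    \chi_{\calA_{\R}(G;m)}(t)=\chi_G(t-(m-1)),
\end{align}
since evaluating \eqref{eq:CharPolyShift} at $t=1$ yields $\chi_G(2-m)=\chi_G(-(m-2))$.

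To prove \eqref{eq:CharPolyShift} I would apply the finite field method (Crapo--Rota, Athanasiadis). All defining equations $z_v=z_w$ (for $\{v,w\}\in E$) and $z_v=i$ (for $i\in\{0,\dots,m-2\}$) have integer coefficients, and for all sufficiently large primes $q$ the reductions of these hyperplanes modulo $q$ have the same intersection lattice as over $\Q$; in fact any prime $q\ge m$ works, since the only numerical coincidences that could collapse distinct flats would require $q$ to divide one of the integers $1,\dots,m-2$. For such $q$ the finite field method gives
\begin{align*}
    \chi_{\calA_{\R}(G;m)}(q)=\#\{z\in\mathbb{F}_q^V:\ z_v\ne z_w\text{ for all }\{v,w\}\in E,\ \ z_v\notin\{0,\dots,m-2\}\text{ for all }v\in V\}.
\end{align*}
For each vertex, the coordinate $z_v$ is constrained to the $(q-(m-1))$-element set $\mathbb{F}_q\setminus\{0,\dots,m-2\}$, and the quantity on the right counts exactly the proper colorings of $G$ using this palette of $q-(m-1)$ colors, which is $\chi_G(q-(m-1))$. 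Since two polynomials agreeing on infinitely many integers coincide, \eqref{eq:CharPolyShift} follows, and with it the theorem.

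The only step that is not pure bookkeeping is the justification of the finite field method, i.e.\ the claim that no unexpected coincidences among the reduced hyperplanes occur for large $q$; I would make this explicit via the bound $q\ge m$, which keeps the forbidden values $0,\dots,m-2$ pairwise distinct in $\mathbb{F}_q$ and distinct from the relevant coordinate differences, and then invoke the standard criterion that this suffices. As a sanity check one can verify small cases directly: for $G=K_2$ the arrangement $\calA_{\R}(K_2;m)$ is $m-1$ vertical lines, $m-1$ horizontal lines and the diagonal $z_1=z_2$ in $\R^2$, which has $(m-1)(m-2)$ bounded regions, matching $(-1)^2\chi_{K_2}(-(m-2))=(m-1)(m-2)$. (An alternative proof avoiding finite fields is to establish \eqref{eq:CharPolyShift} by induction on $\abs{E}$ using the deletion--restriction recursion for characteristic polynomials, which runs parallel to the deletion--contraction recursion $\chi_G=\chi_{G\setminus e}-\chi_{G/e}$.)
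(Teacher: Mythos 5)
Your argument is correct, but it takes a genuinely different route from the paper. The paper proves Theorem \ref{thm:num_bounded_components} bijectively: it first shows every bounded region lies in $[0,m-2]^V$, then assigns to each point of a bounded region the acyclic orientation determined by the coordinate order together with the coloring $\sigma(v)=\ceil{x_v}$, and checks this gives a bijection between bounded regions and the pairs counted by Stanley's theorem (Theorem \ref{thm:Stanley}); the count $(-1)^{|V|}\chi_G(-(m-2))$ is then immediate. You instead compute the characteristic polynomial of the arrangement via the finite field method, obtaining $\chi_{\calA_{\R}(G;m)}(t)=\chi_G(t-(m-1))$, and invoke Zaslavsky's bounded-region formula $(-1)^{|V|}\chi_{\calA}(1)$ for an essential arrangement. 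All the steps check out: the arrangement is essential because it contains $\{z_v=0\}$ for every $v$; the intersection semilattice is preserved mod $q$ once the constants $0,\dots,m-2$ stay distinct (the coefficient vectors $e_v$ and $e_v-e_w$ cause no rank degeneration), so $q\ge m$ indeed suffices; and the point count over $\mathbb{F}_q$ is exactly a proper-coloring count with a palette of size $q-(m-1)$. What each approach buys: the paper's bijection is more explicit and meshes with the orientation/coloring language used later (e.g.\ in the digraph discussion), whereas your computation is shorter, essentially automatic, and yields the full characteristic polynomial of $\calA_{\R}(G;m)$ rather than just its value at $1$ --- for instance it also recovers the total region count and the Euler characteristic input to Varchenko's theorem without further work. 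Either proof is acceptable here.
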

Note that the case $m=3$ gives a characterization of $(-1)^{|V|}\chi_G(-1)$ slightly different from Zaslavsky's theorem above, but it is easily seen that they are equivalent; each region of $\calA_{\R}(G)$ intersects the unit hypercube $[0,1]^V$ in a unique (bounded) region of $\calA_{\R}(G;3)$, and all other regions of $\calA_{\R}(G;3)$ are unbounded.

The number of bounded regions of a hyperplane arrangement also has an interpretation in terms of critical points of a functional, due to Varchenko \cite{Varchenko1995}. For this,
let $\calA_{\R}\subset\R^n$ be a real hyperplane arrangement, and let $\calA_{\C}\subseteq\C^n$ denote the corresponding complex hyperplane arrangement. (We will sometimes write simply $\calA$ if the distinction between $\calA_{\R}$ and $\calA_{\C}$ is irrelevant.)
For every hyperplane $H \in \calA_{\R}$, let $f_H$ be a real degree-1 polynomial with zero set $H$. Let $\boldu = (u_H)_{H \in \calA} \in \C^{\# \calA}$, and consider the function
\begin{equation}\label{eq:LikelihoodFunction}
    F_{\calA, \boldu} = \prod_{H \in \calA} f_H^{u_H}.
\end{equation}
This is a \emph{multivalued} function on $\C^n \setminus \calA_{\C}$. However, the logarithmic derivatives $\frac{\partial \log F_{\calA, \boldu}}{\partial z_v}$ are independent of the branch, so it makes sense to talk about critical points of $F_{\calA, \boldu}$.
\begin{thm}[{Varchenko, \cite[Thm. 1.2.1]{Varchenko1995}, see also \cite{OrlikTerao,Huh2013}}]
    For general $\boldu\in\C^{\# \calA}$, the following hold:
    \begin{enumerate}
        \item $F_{\calA, \boldu}$ has finitely many critical points on $\C^n \setminus \calA_{\C}$, all of which are nondegenerate\footnote{A critical point is nondegenerate if the Hessian matrix at that point is nonsingular.}. \label{it:FinitelyManyCriticalPoints}
        \item The number of critical points is equal to the signed Euler characteristic $(-1)^{n}\chi(\C^n \setminus \calA_{\C})$.\label{it:CountCriticalPoints}
        \item If $u_H\in\R_{>0}$ for all hyperplanes $H$ in $\calA_{\R}$, then the critical points of $F_{\calA, \boldu}$ are all real, are all contained in bounded regions of $\R^n\setminus\calA_{\R}$, and there is exactly one critical point in each bounded region.\label{it:1PerRegion}
    \end{enumerate}
    \label{thm:varschenko}
\end{thm}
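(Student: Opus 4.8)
The plan is to prove the three assertions together, combining (i) real convexity of $\log\abs{F_{\calA,\boldu}}$ on bounded chambers for the positive‑real statement, (ii) a generic‑smoothness argument on an incidence variety for finiteness and nondegeneracy, and (iii) a logarithmic Chern‑class computation on a good compactification for the exact count; the resulting upper and lower bounds will then be forced to agree. After quotienting by the common lineality space we may assume $\calA$ is essential --- otherwise $\C^n\setminus\calA_\C\cong\C^k\times U$ with $k\ge1$, so $\chi(\C^n\setminus\calA_\C)=0$, the form $d\log F_{\calA,\boldu}$ is pulled back from $U$ and has no isolated zeros, and $\calA_\R$ has no bounded regions, making all three claims trivial.

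Step 1 (positive real $\boldu$; statement (3) and a lower bound for (2)). Fix $u_H\in\R_{>0}$ and set $\phi_{\boldu}=\sum_{H}u_H\log\abs{f_H}=\operatorname{Re}\log F_{\calA,\boldu}$. Since $\log F_{\calA,\boldu}$ is holomorphic on $\C^n\setminus\calA_\C$, the Cauchy--Riemann equations identify its critical points and their nondegeneracy with those of $\phi_{\boldu}$. On any bounded region $R$ of $\R^n\setminus\calA_\R$ each $f_H$ has a fixed sign, so $\phi_{\boldu}$ is a positive combination of logarithms of affine functions, hence concave, with Hessian equal to a negative combination of the rank‑one matrices $(\nabla f_H)(\nabla f_H)^{\mathsf{T}}$; essentiality makes these span, so the Hessian is negative definite and $\phi_{\boldu}$ is strictly concave on $R$. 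Since every $u_H>0$, $\phi_{\boldu}\to-\infty$ as $z\to\partial R$, so $\phi_{\boldu}$ has a maximum at a unique interior point of $R$, a nondegenerate critical point of $F_{\calA,\boldu}$. Hence $F_{\calA,\boldu}$ has at least one nondegenerate critical point in each bounded region of $\R^n\setminus\calA_\R$, i.e. at least $b(\calA_\R)$ of them in $\C^n\setminus\calA_\C$; and $b(\calA_\R)=(-1)^n\chi(\C^n\setminus\calA_\C)$ by Zaslavsky \cite{Zaslavsky1975} together with the Orlik--Solomon formula for $\chi$ of an arrangement complement (see \cite{OrlikTerao}).

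Step 2 (generic $\boldu$; statements (1) and (2)). Consider the incidence variety $Z=\{(z,\boldu):z\in\C^n\setminus\calA_\C,\ \textstyle\sum_H u_H\,\partial_{z_j}f_H/f_H=0\ \forall j\}$ and project it to $\boldu$‑space; a standard generic‑finiteness and generic‑smoothness argument shows the fiber is finite and reduced off a proper subvariety of $\boldu$‑space, proving (1). For the count, use \cite{Huh2013}: $\C^n\setminus\calA_\C$ admits a smooth compactification $X$ with simple normal crossings boundary $D$ such that $\Omega^1_X(\log D)$ is globally generated; the $1$‑form $\omega_{\boldu}=\sum_H u_H\,d\log f_H$ extends to a global section of $\Omega^1_X(\log D)$, which for generic $\boldu$ is transverse to the zero section and vanishes only on $\C^n\setminus\calA_\C$. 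Therefore the number of critical points equals $\int_X c_n\bigl(\Omega^1_X(\log D)\bigr)=(-1)^n\chi(\C^n\setminus\calA_\C)$ by logarithmic Gauss--Bonnet. (One may instead cite \cite{OrlikTerao} or \cite{Huh2013} for this evaluation directly.)

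Step 3 (closing the loop for (3)). It remains to show that for all $\boldu$ with $u_H\in\R_{>0}$ the critical points of $F_{\calA,\boldu}$ are exactly those produced in Step 1. The generic count of Step 2 is an upper bound for the length of any finite critical scheme, and Step 1 supplies $b(\calA_\R)=(-1)^n\chi(\C^n\setminus\calA_\C)$ nondegenerate real critical points, so it is enough to prove that the critical scheme of $F_{\calA,\boldu}$ is finite for such $\boldu$ --- equivalently, that no critical point escapes to the boundary $D$ of $X$ as $\boldu$ ranges over the positive cone. This is the main obstacle: near each boundary stratum one must examine the leading (residue) term of $\omega_{\boldu}$ and exploit positivity of the weights $u_H$ to rule out a common zero there. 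Granting this, the lower bound of Step 1 and the upper bound of Step 2 coincide, forcing every critical point to be real, to lie in a bounded region, and to be the unique one in its region; combined with the nondegeneracy already established, this completes the proof.
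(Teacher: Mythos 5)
First, a point of orientation: the paper does not prove Theorem \ref{thm:varschenko} at all --- it is quoted from Varchenko (with Orlik--Terao and Huh as further references) and used strictly as a black box in Section \ref{sec:MLDegrees}, so there is no internal proof to compare yours against. On its own terms, your sketch reconstructs the standard arguments: the strict-concavity-plus-properness argument on bounded chambers is exactly Varchenko's proof of (3) and of the lower bound, and identifying the count with $\int_X c_n\bigl(\Omega^1_X(\log D)\bigr)=(-1)^n\chi$ on a wonderful compactification is the Orlik--Terao/Huh route to (1) and (2) (your Step 2 is essentially a citation of \cite{Huh2013}, which is acceptable here since the paper cites the same source). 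The self-acknowledged gap in Step 3 --- ruling out critical points escaping to the boundary for \emph{every} positive real $\boldu$ --- is a genuine gap if you want Varchenko's full original statement, but it is not needed for the statement as quoted: all three parts are asserted only for \emph{general} $\boldu$, and since $\R_{>0}^{\# \calA}$ is Zariski-dense in $\C^{\# \calA}$, a general positive real $\boldu$ already lies in the Zariski-open locus of Step 2 where the critical scheme is finite and reduced of cardinality $(-1)^n\chi(\C^n\setminus\calA_{\C})$; Step 1 then accounts for all of these points, forcing them to be real, one per bounded region. So the boundary-residue analysis you flag as the main obstacle can simply be deleted.

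The step that actually fails is your opening reduction to the essential case. You assert that if $\C^n\setminus\calA_{\C}\cong\C^k\times U$ with $k\ge1$ then $\chi(\C^n\setminus\calA_{\C})=0$; in fact $\chi(\C^k\times U)=\chi(\C^k)\cdot\chi(U)=\chi(U)$, which is nonzero whenever the essentialization has bounded regions. In that situation the critical locus is a nonempty union of parallel $k$-dimensional affine subspaces (e.g. $\calA=\{x=0\}\cup\{x=1\}$ in $\R^2$, where $\Crit$ is the line $x=u_1/(u_1+u_2)$), so parts (1) and (3) literally fail rather than being ``trivial''. The theorem must be read with essentiality assumed --- as in Varchenko's original formulation, and as holds automatically for every arrangement $\calA(G;m)$ used in this paper, since the hyperplanes $z_v=0$ for $v\in V$ already have spanning normals. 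You should state that hypothesis rather than claim the non-essential case for free.
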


Theorem \ref{thm:varschenko}\eqref{it:1PerRegion} applied to $\calA(G;m)$, combined with Theorem~\ref{thm:num_bounded_components}, gives yet another interpretation of $(-1)^{|V|} \chi_G( - (m - 2))$. In fact, this interpretation is not too far from Theorem~\ref{thm:main}:

\begin{thm}
    For general $\boldu \in \C^{\# \calA(G;m)}$, $F_{\calA(G;m), \boldu}$ has exactly $\omega_{G,0,m}$ critical points.
    \label{thm:critical_point_interpretation}
\end{thm}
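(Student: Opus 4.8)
The plan is to compute the number of critical points of $F_{\calA(G;m),\boldu}$ using Theorem \ref{thm:varschenko}\eqref{it:CountCriticalPoints}, and then to identify the resulting signed Euler characteristic with the intersection number $\omega_{G,0,m}$ by realizing $\C^V\setminus\calA_\C(G;m)$ as an open subset of the moduli space $\Mbar_{0,V\sqcup M}$ (or of the open part $\M_{0,V\sqcup M}$), with the hyperplane complement precisely cut out by the non-vanishing of the relevant $\psi$-pullbacks. First I would fix coordinates: identifying $\M_{0,V\sqcup M}$ with a configuration space of distinct points on $\P^1$, I would place the $m$ points of $M$ at $0,1,\infty,\ldots$ and the remaining $m-3$ at generic fixed positions, so that the point class $\pi_M^*([pt])$ corresponds to this fixed configuration and the fiber of $\pi_M$ over it is itself a configuration space of the $|V|$ points indexed by $V$ on $\P^1\setminus\{0,1,\infty,\text{others}\}$, i.e. $(\P^1\setminus\{m\text{ points}\})^V$ minus diagonals. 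Choosing the $m-1$ finite points of $M$ to be $\{0,\ldots,m-2\}$ matches the definition of $\calA(G;m)$, the hyperplanes $z_v=i$ being exactly the loci where the point $v$ collides with a point of $M$, and the hyperplanes $z_v=z_w$ being the diagonals for edges of $G$.

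The key step is then to show that on this fiber the class $\Psi_{G,0,m}\cdot\pi_M^*([pt])$ represents, up to the intersection-theoretic dictionary, the top Chern class / Euler-characteristic contribution of the hyperplane complement, so that integrating it computes $(-1)^{|V|}\chi(\C^V\setminus\calA_\C(G;m))$. Concretely, $\pi_{N[v]\cup M}^*(\psi_v)$ restricted to the fiber is (up to a boundary correction supported away from the open locus) the class of the locus where $v$ collides with \emph{some} point of $N[v]\cup M$; on the open stratum these collision loci are exactly the hyperplanes of $\calA(G;m)$ adjacent to the variable $z_v$. A transversality/genericity argument — the genericity of $\boldu$ and of the fixed $M$-configuration — ensures the relevant intersection is proper and the boundary contributions vanish, so that $\omega_{G,0,m}$ equals the number of critical points guaranteed by Theorem \ref{thm:varschenko}. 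Alternatively, and perhaps more cleanly, I would invoke Theorem \ref{thm:num_bounded_components} together with Theorem \ref{thm:varschenko}\eqref{it:1PerRegion} to see that the critical-point count is $(-1)^{|V|}\chi_G(-(m-2))$, and then cite Theorem \ref{thm:main} in the case $(g,m)=(0,m)$ to conclude this equals $\omega_{G,0,m}$; but since the stated goal of Section \ref{sec:MLDegrees} is to give an \emph{independent} proof of Theorem \ref{thm:main}, this shortcut is circular, and the honest route must go through the geometric identification above without using Theorem \ref{thm:main}.

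The main obstacle I anticipate is precisely controlling the boundary: the class $\pi_{N[v]\cup M}^*(\psi_v)$ is a $\psi$-class pulled back from a \emph{smaller} moduli space, so on $\Mbar_{0,V\sqcup M}$ it is not simply the closure of a collision locus but differs from it by boundary divisors (this is exactly the subtlety the first proof handles via the deletion–contraction recursion). Showing that, after intersecting all $|V|$ of these classes with the point class $\pi_M^*([pt])$ and with the genericity afforded by $\boldu$, the extra boundary terms contribute zero to the count of critical points on the open stratum $\C^V\setminus\calA_\C(G;m)$ — equivalently, that no critical point escapes to the boundary and no spurious intersection appears there — is the technical heart. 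The excerpt itself flags that the second proof requires ``some involved computations to rule out certain unexpected intersections,'' and this is where that work lives; I would handle it by a dimension count on each boundary stratum of $\Mbar_{0,V\sqcup M}$, using that each $\psi_v$-pullback vanishes or drops the expected codimension on strata where $v$ lies on a component with too few special points, combined with the fact that a generic point of $\Mbar_{0,M}$ avoids all the relevant excess-dimensional images.
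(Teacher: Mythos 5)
Your overall architecture matches the paper's: identify $\pi_M^{-1}(y)\cap\M_{0,V\sqcup M}$ with the hyperplane complement, represent the classes $\pi_{N[v]\cup M}^*(\psi_v)$ by explicit divisors, and rule out boundary contributions by a stratum-by-stratum codimension argument (you also correctly diagnose that citing Theorem \ref{thm:main} would be circular). But there is a genuine gap at the central step. You propose that $\pi_{N[v]\cup M}^*(\psi_v)$ restricted to the fiber is ``the class of the locus where $v$ collides with some point of $N[v]\cup M$.'' Those collision loci are boundary divisors of $\Mbar_{0,V\sqcup M}$, and the open fiber $\pi_M^{-1}(y)\cap\M_{0,V\sqcup M}\cong\C^V\setminus\calA_{\C}(K_V;m)$ is precisely the locus where no such collision occurs --- so your proposed representative is disjoint from the set on which the critical points live and cannot cut out $Z(\partial\log F_{\calA,\boldu}/\partial z_v)$ there. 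The mechanism you are missing is the Kapranov morphism $\abs{\psi_v}:\Mbar_{0,N[v]\cup M}\to\P^{k-1}$, whose coordinates are the reciprocals $1/(z_v-z_w)$: the paper takes $L_{v,\boldu}$ to be the pullback of the hyperplane with coefficients $\boldu$, which simultaneously (a) represents $\pi_{N[v]\cup M}^*(\psi_v)$ because hyperplane sections of a complete linear system represent the class, and (b) restricts on the open fiber to exactly the vanishing of the logarithmic derivative \eqref{eqn:log_diff}. Without some such choice of representative depending on $\boldu$, the link between $\omega_{G,0,m}$ and the critical points of $F_{\calA(G;m),\boldu}$ specifically (rather than some Euler-characteristic count) is not established.

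A second, smaller gap: the fiber identification removes \emph{all} diagonals $z_v=z_w$ (marked points on a smooth curve are distinct), whereas the domain of $F_{\calA(G;m),\boldu}$ only removes the diagonals indexed by edges of $G$. A priori a critical point could sit on a diagonal for a non-edge and thus fail to correspond to a point of $\M_{0,V\sqcup M}$ at all; the paper rules this out separately (Lemma \ref{lem:CritAvoidsKV}, via irreducibility of the likelihood correspondence), and your proposal does not address it. Finally, on the boundary analysis: your instinct (dimension count per stratum) is the right one, but be aware that a naive count does not suffice --- the restricted classes can fail to meet properly on a stratum, and the paper has to exploit genericity of $\boldu$ one hypersurface at a time via a combinatorial assignment of ``fresh'' coordinates of $\boldu$ to the vertices sitting on an overloaded component (Lemma \ref{lem:combinatorial_assignment}); this is where most of the real work lives.
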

We prove Theorem \ref{thm:critical_point_interpretation} in Section~\ref{sec:hyperplanes}, as follows. By definition, the critical locus of $F_{\calA(G;m), \boldu}$ is an intersection of hypersurfaces: $$\Crit(F_{\calA(G;m), \boldu})=\bigcap_{v\in V}Z\left(\frac{\partial \log F_{\calA, \boldu}}{\partial z_v}\right)\subseteq\C^{V}$$ defined by the logarithmic derivatives of $F_{\calA, \boldu},$ where $Z(g)$ denotes the vanishing locus of $g$. We prove that under an appropriate embedding $\C^{V}\into\Mbar_{0,V\sqcup M}$, the intersection $\Crit(F_{\calA(G;m), \boldu})\subseteq\C^{V}$ is identified with the intersection of certain subvarieties of $\Mbar_{0,V\sqcup M}$; these subvarieties are representatives of the cohomology classes $\pi_M^*([pt])$ and $\pi_{N[v]\cup M}^*(\psi_v)$ (for all $v$) appearing in the definition of $\omega_{G,0,m}$. As is typical in intersection theory, some careful case analysis is required to match up the intersection number $\omega_{G,0,m}$ with the cardinality of an actual intersection.

Combining Theorem \ref{thm:critical_point_interpretation} with Theorem \ref{thm:num_bounded_components} and Varchenko's theorem above gives an alternative proof of Theorem \ref{thm:main} in the genus zero case (and in the higher-genus case, see Remark \ref{rem:HigherGenusFromGenusZero}):
\begin{proof}[Second proof of Theorem \ref{thm:main}, genus zero case]
    Let $U\subseteq\C^{\# \calA(G;m)}$ be a dense open set such that for $\boldu\in U,$ the conclusion of Theorem \ref{thm:varschenko} holds. Since the positive orthant $\R_{>0}^{\# \calA(G;m)}\subset\C^{\# \calA(G;m)}$ is Zariski-dense, we may fix $\boldu\in U\cap\R_{>0}^{\# \calA(G;m)}.$ Then
    \begin{align*}
        \omega_{G,0,m}&=\#\{\text{critical points of $F_{\calA(G;m), \boldu}$}\}&&\text{by Theorem \ref{thm:critical_point_interpretation}}\\
        &=\#\{\text{bounded regions of $\R^V\setminus\calA_{\R}(G;m)$}\}&&\text{by Theorem \ref{thm:varschenko}\eqref{it:1PerRegion}}\\
        &=(-1)^{\abs{V}}\chi_G(-(m-2))&&\text{by Theorem \ref{thm:num_bounded_components}}.\qedhere
    \end{align*}
\end{proof}

\subsection{Connections to algebraic statistics and to scattering amplitudes} \subsubsection{Algebraic Statistics} Theorem \ref{thm:critical_point_interpretation} interprets $\omega_{G,0,m}$ as counting critical points of a product of powers of linear functions. The problem of finding critical points of functions of this  form arises in statistics, specifically in maximum likelihood estimation (see \cite{HuhSturmfels}).
Maximum likelihood estimation is used to find a probability distribution within a fixed model (i.e. family of distributions) that best explains observed data. In this context, the hyperplanes $H\in\mathcal A$ correspond to possible outcomes, and the numbers $u_H$ are the observed frequencies of each outcome. The family of distributions, dependent on a vector of parameters $\boldz=(z_v)_{v\in V}$, is given by the function $(f_H)_{H\in\calA}:\C^{V}\setminus\calA\to\C^{\#\calA}.$ The function $F_{\calA,\boldu}$ measures the likelihood that a particular distribution would give rise to the observed data $\boldu=(u_H)_{H\in\calA},$ and is called the \emph{likelihood function}. One finds the critical points of this function in order to maximize it. The \emph{maximum likelihood degree} of the model is the number of critical points of the likelihood function, for generic data. 

Theorem \ref{thm:critical_point_interpretation} thus realizes $\omega_{G,0,m}$ as a maximum likelihood degree. The situation we consider, where the $f_H$s are all linear in the $z_v$s, correspond to models in which there are linear restrictions on the relative probabilities of various outcomes.

There are some differences between the usual setup of maximum likelihood estimation and our setup --- for example, in order to specify a model, one requires $\sum_{H\in\calA}f_H=1,$ while we do not require this.

\subsubsection{Scattering Amplitudes} In the case $m=3,$ Theorem \ref{thm:critical_point_interpretation} is also closely related to the Cachazo-He-Yuan model \cite{CHY} in the computation of scattering amplitudes. We identify $\M_{0,n+3}$ with the set of matrices of the form
\begin{equation*}
    X = \begin{bmatrix}
    1 & 1 & 1 & 1 & \dots & 1 & 0 \\
    0 & 1 & x_1 & x_2 & \dots & x_n & 1
    \end{bmatrix},
\end{equation*}
where all minors $p_{ij}, 1 \leq i < j \leq n+3$ are nonzero. In the CHY model, one considers the critical points of the \emph{scattering potential}
\begin{equation}
    L = \sum_{1 \leq i < j \leq n+3} s_{ij} \log (p_{ij})
\end{equation}
where $s_{ij}$ are called \emph{Mandelstam invariants}, satisfying $s_{ji} = s_{ij}, s_{ii} = 0$ and $\sum_{1\leq j \leq n+3} s_{ij} = 0$ (``conservation of momentum''). The space of all possible Mandelstam invariants is called \emph{kinematic space}. Scattering \emph{amplitudes}, in the CHY model, are sums of certain rational functions over all critical points of the scattering potential.

Note that the (nonconstant) minors $p_{ij}$ define hyperplanes in $\calA(K_n;3)$: indeed, $p_{1(j+2)} = x_j, p_{2(j+2)} = x_j - 1, p_{(i+2)(j+2)} = x_j - x_i$ for $1 \leq i,j \leq n$. We can therefore identify the scattering potential $L$ with the logarithm of $F_{\calA(K_n;3), \boldu}$. In particular, applying Theorem \ref{thm:critical_point_interpretation} to $G=K_n$ recovers the classical result (see \cite[Sec. 2]{SturmfelsTelen}) that, for generic Mandelstam invariants, the scattering potential has exactly $n!$ critical points in $\M_{0,n+3}$.

More generally, given a graph with index set $[n]$, we can consider the subspace of Mandelstam invariants where $s_{i+2,j+2} = 0 $ for nonadjacent $i,j\in[n]$. The number of critical points for generic Mandelstam invariants in this subspace is then given as $(-1)^n\chi_G(-1)$. Note that this never imposes restrictions on the Mandelstam invariants $s_{1,i}, s_{2,i}, s_{i,n+3}$, corresponding to the fact that our forgetful maps $\pi_{N[v]\sqcup M}$ never forget elements of $M$. Similar restrictions on Mandelstam invariants appear in \cite{CachazoEarly,EarlyPfisterSturmfels}.

\subsection{Further exploration}\label{sec:FurtherQuestions}
Theorem \ref{thm:main} raises many interesting questions. We outline some of these here, along with a few related observations, but there are doubtless many others that we are unaware of. Proofs of claims in this section are provided in Section \ref{sec:Digraphs}. 

\subsubsection{Directed graphs}
Attempting to generalize Theorem \ref{thm:main} to directed graphs (digraphs) yields some new and mysterious combinatorics. Note that the definition of $\Psi_{G,g,m}$ (Equation \eqref{eq:PsiDef} on page \pageref{eq:PsiDef}) works just as well for $G$ a (finite simple\footnote{This is, we allow two-cycles $u \rightleftarrows w$, but not parallel arrows $u \rightrightarrows w$ or loops.}) digraph, provided we replace the set of neighbors $N[v]$ with either the set of in-neighbors $N^{\mathrm{in}}[v]$ (vertices $w$ such that $w\to v$ is an edge of $G$) or out-neighbors $N^{\mathrm{out}}[v]$. For a digraph $G$, we therefore get \emph{two} classes $$\Psi_{G,g,m}^{\mathrm{in}},\Psi_{G,g,m}^{\mathrm{out}}\in H^*(\Mbar_{g,V\sqcup M})$$ and \emph{two} sets of intersection numbers $$\omega_{G,g,m}^{\mathrm{in}},\omega_{G,g,m}^{\mathrm{out}}$$ in exact analogy with \eqref{eq:PsiDef} and \eqref{eq:OmegaDef}. Reversing all edges of $G$ interchanges these pairs. The numbers $\omega_{G,g,m}^{\mathrm{in}}$ and $\omega_{G,g,m}^{\mathrm{out}}$ vary polynomially in $m$ as in the undirected case:
\begin{defthm}\label{defthm:DigraphPolynomials}
    There exist monic polynomials $\chi_G^{\mathrm{in}},\chi_G^{\mathrm{out}}\in\Z[x]$ of degree $\abs{V}$ such that, for $g\ge0$ and $m\ge0$ with $2g-2+m>0,$ we have
    \begin{align*}
        \omega_{G,g,m}^{\mathrm{in}}&=(-1)^{\abs{V}}\chi_G^{\mathrm{in}}(-(2g-2+m))\\
        \omega_{G,g,m}^{\mathrm{out}}&=(-1)^{\abs{V}}\chi_G^{\mathrm{out}}(-(2g-2+m)).\nonumber
    \end{align*}
    Furthermore, for the case $(g,m)=(1,0),$ we have
    \begin{align*}
        \omega_{G,1,0}^{\mathrm{in}}&=(-1)^{\abs{V}-1}\left.\deriv{}{x}\chi_G^{\mathrm{in}}(x)\right|_{x=0}&&\text{and}&
        \omega_{G,1,0}^{\mathrm{out}}&=(-1)^{\abs{V}-1}\left.\deriv{}{x}\chi_G^{\mathrm{out}}(x)\right|_{x=0}.
    \end{align*}
\end{defthm}
Definition/Theorem \ref{defthm:DigraphPolynomials}, which is proved in Section \ref{sec:Digraphs}, 
immediately raises many natural questions.
\begin{question}\label{q:DigraphQuestions}
    \begin{enumerate}
        \item Are $\chi_G^{\mathrm{in}}(x)$ and $\chi_G^{\mathrm{out}}(x)$ nonnegative for integers $x\ge0$?
        \item If so, do $\chi_G^{\mathrm{in}}(x)$ and $\chi_G^{\mathrm{out}}(x)$ have combinatorial interpretations?
        \item Do $\chi_G^{\mathrm{in}}$ and $\chi_G^{\mathrm{out}}$ have alternating coefficients?
        \item Do $\chi_G^{\mathrm{in}}$ and $\chi_G^{\mathrm{out}}$ satisfy recursions along the lines of deletion-contraction?
        \item Do $\chi_G^{\mathrm{in}}$ and $\chi_G^{\mathrm{out}}$ satisfy other properties of chromatic polynomials, e.g. most notably log-concavity of coefficients as per \cite{Huh2012}?
    \end{enumerate}
\end{question}

As an initial observation, we have the following recursion, which gives a closed formula for $\chi_G^{\mathrm{in}}$ and $\chi_G^{\mathrm{out}}$ when $G$ has no directed cycles.
\begin{prop}\label{prop:SourceRecursion}
    Let $G=(V,E)$ be a finite simple directed graph, let $v\in V$, and let $G\setminus\{v\}$ be the graph obtained by deleting $v$. If $v$ is a sink, then
    \begin{align*}
        \chi_G^{\mathrm{in}}(x)=(x-\indeg(v))\cdot\chi_{G\setminus v}^{\mathrm{in}}(x).
    \end{align*}
    If $v$ is a source, then
    \begin{align*}
        \chi_G^{\mathrm{out}}(x)=(x-\outdeg(v))\cdot\chi_{G\setminus v}^{\mathrm{out}}(x).
    \end{align*}
    Here $\indeg(v)$ and $\outdeg(v)$ denote the number of incoming and outgoing edges at $v$, respectively.
\end{prop}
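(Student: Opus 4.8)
The plan is to prove the statement about sinks; the statement about sources then follows formally by reversing all edges of $G$, which interchanges $\chi^{\mathrm{in}}_{\bullet}$ with $\chi^{\mathrm{out}}_{\bullet}$, interchanges $\indeg$ with $\outdeg$, exchanges sinks and sources, and commutes with deleting $v$. So suppose $v$ is a sink; write $d=\indeg(v)$, $V'=V\setminus\{v\}$, and let $S^\circ\subseteq V'$ be the set of in-neighbours of $v$, so that $|S^\circ|=d$. By Definition/Theorem \ref{defthm:DigraphPolynomials} it suffices to establish the numerical identity
\[
\omega^{\mathrm{in}}_{G,g,m}=(2g-2+m+d)\cdot\omega^{\mathrm{in}}_{G\setminus v,\,g,m}
\]
for all $g,m$ with $2g-2+m>0$: once this is known, the Definition/Theorem converts it (writing $y=-(2g-2+m)$) into $\chi^{\mathrm{in}}_{G}(y)=(y-d)\,\chi^{\mathrm{in}}_{G\setminus v}(y)$ at $y=-1,-2,-3,\dots$, hence identically, both sides being polynomials of degree $\abs{V}$; and the $(g,m)=(1,0)$ assertion about the coefficient of $x$ then comes for free, since it was deduced from the identities at $(0,m)$, $m\ge 3$, alone. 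I would dispose of $\abs{V}=1$ at the outset: then $G$ is a single vertex, $\chi^{\mathrm{in}}_G(x)=x$ by the base case \eqref{eq:BaseCase}, and the recursion reads $x=(x-0)\cdot 1$ with the convention $\chi^{\mathrm{in}}_\emptyset=1$; so assume $\abs{V}\ge 2$.

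The key structural point is that because $v$ is a sink, $v\notin N^{\mathrm{in}}[w]$ for every $w\ne v$, so in $\Psi^{\mathrm{in}}_{G,g,m}=\prod_{w\in V}\pi_{N^{\mathrm{in}}[w]\cup M}^*(\psi_w)$ the marked point $v$ appears only in the single factor $\pi_S^*(\psi_v)$, where $S:=N^{\mathrm{in}}[v]\cup M=\{v\}\sqcup S^\circ\sqcup M$; moreover deleting $v$ leaves $N^{\mathrm{in}}[w]$ unchanged for $w\ne v$. Hence, writing $\pi\colon\Mbar_{g,V\sqcup M}\to\Mbar_{g,V'\sqcup M}$ for the map forgetting $v$, each remaining factor $\pi_{N^{\mathrm{in}}[w]\cup M}^*(\psi_w)$, and also $\pi_M^*([pt])$, is a $\pi$-pullback, and one obtains
\[
\Psi^{\mathrm{in}}_{G,g,m}\cdot\pi_M^*([pt])=\pi_S^*(\psi_v)\cdot\pi^*\!\big(\Psi^{\mathrm{in}}_{G\setminus v,\,g,m}\cdot\pi_M^*([pt])\big).
\]
Integrating and applying the projection formula gives $\omega^{\mathrm{in}}_{G,g,m}=\int_{\Mbar_{g,V'\sqcup M}}\pi_*\!\big(\pi_S^*\psi_v\big)\cdot\big(\Psi^{\mathrm{in}}_{G\setminus v,\,g,m}\cdot\pi_M^*([pt])\big)$; since the second factor is a top-degree class with integral $\omega^{\mathrm{in}}_{G\setminus v,\,g,m}$ and $\pi_*(\pi_S^*\psi_v)\in H^0(\Mbar_{g,V'\sqcup M})$ is a constant (it is the pushforward of a divisor class along a family of curves), everything reduces to computing that constant.

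To evaluate $\pi_*(\pi_S^*\psi_v)$ I would use that this constant equals the degree of $\pi_S^*\psi_v$ on a general fibre of $\pi$. A general fibre of $\pi$ is a smooth curve $C$ (obtained by attaching the extra marking $v$), and — provided $\Mbar_{g,S^\circ\sqcup M}$ is nonempty and forgetting the markings outside $S$ from $C$ requires no stabilization, which I would check holds for $(g,m)\ne(1,0)$ — the restriction of $\pi_S$ to this fibre is exactly the inclusion of $C$ as the fibre over $[C]$ of the universal curve $\Mbar_{g,S}=\Mbar_{g,\{v\}\sqcup(S^\circ\sqcup M)}\to\Mbar_{g,S^\circ\sqcup M}$. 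Thus $\pi_S^*\psi_v$ restricts to $\psi_v$ on this fibre, whose degree is $2g-2+\abs{S^\circ\sqcup M}=2g-2+d+m$ by the dilaton equation ($\pi_*\psi_{n+1}=2g-2+n$). This yields $\pi_*(\pi_S^*\psi_v)=2g-2+m+d$, hence the numerical identity and the proposition. The step I expect to be the main obstacle — really the only nonroutine point — is this last fibrewise analysis: checking that all the relevant moduli spaces are defined and that the general fibre of $\pi$, together with the restriction of $\pi_S$, genuinely realizes the universal-curve picture (so that $\psi_v$ restricts as claimed), which requires the small-case bookkeeping on numbers of marked points.
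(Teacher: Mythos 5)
Your proof is correct and follows essentially the same route as the paper's: reduce to the sink case by edge-reversal symmetry, observe that the sink $v$ appears in only one factor of $\Psi^{\mathrm{in}}_{G,g,m}\cdot\pi_M^*([pt])$ so that the projection formula along the map $\pi$ forgetting $v$ reduces everything to showing $\pi_*(\pi_S^*\psi_v)=(2g-2+m+\indeg(v))\cdot[\Mbar_{g,V'\sqcup M}]$, and then conclude by interpolating the resulting numerical identity at all negative integers. The only (harmless) divergence is in how that constant is evaluated: the paper expands $\pi_S^*\psi_v=\psi_v-\sum_J D_{\{v\}\cup J}$ via Fact \ref{fact:PsiPullback} and pushes each term forward, obtaining $(2g-2+(\abs{V}-1)+m)-(\abs{V}-1-\indeg(v))$, whereas you restrict to a general fibre of $\pi$, identify it with a fibre of the universal curve over $\Mbar_{g,S\setminus\{v\}}$, and read off the degree of $\omega_C(\textrm{markings})$ there --- both computations give the same answer.
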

An acyclic directed graph always has at least one sink (vertex with no outgoing edges) and at least one source (vertex with no incoming edges). Proposition \ref{prop:SourceRecursion} therefore immediately implies the following by induction:
\begin{cor}\label{cor:AcyclicDigraph}
    Let $G=(V,E)$ be an acyclic finite simple directed graph. Then
    \begin{align*}
        \chi_G^{\mathrm{in}}(x)&=\prod_{v\in V}(x-\indeg(v))&&\text{and}&
        \chi_G^{\mathrm{out}}(x)&=\prod_{v\in V}(x-\outdeg(v)).
    \end{align*}
    
\end{cor}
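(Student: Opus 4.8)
The plan is a routine induction on $\abs{V}$, with Proposition \ref{prop:SourceRecursion} supplying the inductive step; I expect no real obstacle, and the only point that needs a moment's thought is the bookkeeping of degrees under vertex deletion. I would prove the statement for $\chi_G^{\mathrm{in}}$ in detail; the statement for $\chi_G^{\mathrm{out}}$ then follows by reversing all edges of $G$, an operation that swaps sources with sinks and interchanges $\chi_G^{\mathrm{in}}$ with $\chi_G^{\mathrm{out}}$ (cf. the remark in Section \ref{sec:FurtherQuestions} that edge reversal interchanges $\omega^{\mathrm{in}}$ and $\omega^{\mathrm{out}}$).

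For the base case $\abs{V}=1$, the digraph $G$ has no edges, so $N^{\mathrm{in}}[v]=N^{\mathrm{out}}[v]=\{v\}$ and $\Psi_{G,g,m}^{\mathrm{in}}$ coincides with the edgeless class; the base-case formula \eqref{eq:BaseCase} gives $\omega_{G,g,m}^{\mathrm{in}}=2g-2+m$ for $(g,m)\ne(1,0)$, and comparing with Definition/Theorem \ref{defthm:DigraphPolynomials} forces $\chi_G^{\mathrm{in}}(x)=x$. Since the unique vertex has $\indeg(v)=0$, this agrees with $\prod_{v\in V}(x-\indeg(v))=x$, as required.

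For the inductive step, assume $\abs{V}\ge2$. A nonempty acyclic digraph has a sink $v$ — take the terminal vertex of a maximal directed path, noting that any edge leaving it would close a directed cycle — so Proposition \ref{prop:SourceRecursion} applies and gives $\chi_G^{\mathrm{in}}(x)=(x-\indeg(v))\cdot\chi_{G\setminus v}^{\mathrm{in}}(x)$. The digraph $G\setminus v$ is again acyclic, since any directed cycle in $G\setminus v$ is a directed cycle in $G$, and it has $\abs{V}-1\ge1$ vertices, so the inductive hypothesis yields $\chi_{G\setminus v}^{\mathrm{in}}(x)=\prod_{w\in V\setminus\{v\}}(x-\indeg_{G\setminus v}(w))$. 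The one genuinely substantive point is this: because $v$ is a \emph{sink}, every edge incident to $v$ is directed into $v$, so deleting $v$ removes no edge directed into any other vertex; hence $\indeg_{G\setminus v}(w)=\indeg_G(w)$ for all $w\ne v$. Substituting gives $\chi_G^{\mathrm{in}}(x)=(x-\indeg(v))\prod_{w\in V\setminus\{v\}}(x-\indeg(w))=\prod_{w\in V}(x-\indeg(w))$, which completes the induction. All the real content lies in Proposition \ref{prop:SourceRecursion}, so I anticipate nothing here being difficult.
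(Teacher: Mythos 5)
Your proof is correct and follows exactly the route the paper intends: the paper simply states that Proposition \ref{prop:SourceRecursion} ``immediately implies the following by induction'' after observing that an acyclic digraph has a sink, and your write-up supplies precisely the missing details (existence of a sink, acyclicity of $G\setminus v$, and the fact that deleting a sink preserves the in-degrees of the remaining vertices). Nothing further is needed.
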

Proposition \ref{prop:SourceRecursion} is proved in Section \ref{sec:Digraphs}.
\begin{ex}\label{ex:Digraph}
    For the digraph shown in Figure \ref{fig:Digraph}, we have 
    \begin{align*}
        \chi_G^{\mathrm{in}}(x)& = x^2(x-2) =x^3-2x^2&&\text{and}&
        \chi_G^{\mathrm{out}}(x)& = x(x-1)^2 =x^3-2x^2+x.
    \end{align*}
    
    \begin{figure}
        \centering
        \begin{tikzpicture}
            \draw[->,very thick] (0.1,0)--(0.9,0);
            \draw[->,very thick] (1.9,0)--(1.1,0);
            \draw (0,0) node {$\bullet$};
            \draw (1,0) node {$\bullet$};
            \draw (2,0) node {$\bullet$};
        \end{tikzpicture}
        \caption{The smallest digraph for which $\chi_G^{\mathrm{in}}\ne\chi_G^{\mathrm{out}}$, see Example \ref{ex:Digraph}.}
        \label{fig:Digraph}
    \end{figure}
\end{ex}

We will establish many combinatorial properties of $\chi_G^{\mathrm{in}}$ and $\chi_G^{\mathrm{out}}$ in the forthcoming paper \cite{ReinkeSilversmith2026}.

\begin{remark}
    There is a notion of \emph{chromatic polynomial of a digraph} \cite{Harutyunyan2011,AkbariGhodratiJabalameliSaghafian2017,HochstattlerWiehe2021,GonzalezHernandezOrtizLlanoOlsen2022}. These polynomials count digraph colorings in the sense of Neumann-Lara \cite{NeumannLara1982}, where each color must induce an acyclic subdigraph. There is no obvious relationship to $\chi_G^{\mathrm{in}}$ and $\chi_G^{\mathrm{out}}$; for example, the digraph in Figure \ref{fig:Digraph} has no directed cycles, hence has chromatic polynomial $x^3$ in this sense.

    (There is another, inequivalent, notion of digraph coloring that applies only to digraphs with no pairs of opposite edges, which also yields a ``oriented chromatic polynomial'', see \cite{Sopena2016}. Again, there appears to be no relation to $\chi_G^{\mathrm{in}}$ and $\chi_G^{\mathrm{out}}$.)
\end{remark}

\subsubsection{Other intersection numbers involving \texorpdfstring{$\Psi_{G,g,m}$}{Psi\_{G,g,m}}} It would be interesting to know how much combinatorial information about $G$ is captured in $\Psi_{G,g,m}.$ By Theorem \ref{thm:main}, the classes $\Psi_{G,g,m}$ collectively ``know about'' the chromatic polynomial of $G$, in the sense that $\chi_G$ is determined by its values at negative integers --- but it may know more. 
\begin{question}
    If $\Psi_{G_1,g,m}=\Psi_{G_2,g,m}$ for all $g$ and $m$, what can one conclude about the relationship between $G_1$ and $G_2$? Must they be isomorphic? Must they have the same graphic matroid? Must they have the same Tutte polynomial? 
\end{question}
Perhaps relatedly, besides \eqref{eq:OmegaDef}, there are other natural ways to define an intersection number from $\Psi_{G,g,m}$. For example, recall the tautological ``kappa-classes'' $$\kappa_a=(\pi_{12\cdots n})_*(\psi_{n+1}^{a+1})\in H^{2a}(\Mbar_{g,n}),$$ pushed forward along the map $\pi_{12\cdots n}:\Mbar_{g,n+1}\to\Mbar_{g,n}.$ We define
\begin{align*}
    \omega^{\kappa}_{G,g,m}=24^g\cdot g!\cdot\int_{\Mbar_{g,V\sqcup M}}\Psi_{G,g,m}\cdot\kappa_{3g-3+m},
\end{align*}
where the factor is, like in \eqref{eq:Omega10}, a rather natural normalization.
\begin{question}
    Find a combinatorial interpretation (or a closed formula) for $\omega^{\kappa}_{G,g,m}$.
\end{question}
One may prove the following formula for graphs with no edges --- it is striking that even the edgeless case is somewhat nontrivial:
\begin{prop}
    Let $G_n$ be a graph with $n$ vertices and no edges. Then $$\omega^\kappa_{G,g,m}=\sum_{i=0}^{3g-2+m}(2g-2+m)^{n-i}\cdot\binom{n}{i}.$$
\end{prop}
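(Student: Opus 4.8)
\emph{The plan} is to prove the formula by induction on $n=\abs{V}$, removing one vertex at a time, using only the standard behaviour of $\psi$- and $\kappa$-classes under forgetful maps together with the edgeless case of the base-case formula \eqref{eq:BaseCase}. Write $k:=2g-2+m$ and $d:=3g-2+m=\dim\Mbar_{g,\{v\}\cup M}$, and set $\bar\psi_v:=\pi_{\{v\}\cup M}^*(\psi_v)$, so that $\Psi_{G_n,g,m}=\prod_{v\in V}\bar\psi_v$ for the edgeless graph $G_n$. The case $(g,m)=(1,0)$ is immediate: there $\kappa_{3g-3+m}=\kappa_0$ equals the constant $2g-2+n=n$ on $\Mbar_{1,V}$, so by \eqref{eq:Omega10} and \eqref{eq:BaseCase} one has $\omega^\kappa_{G_n,1,0}=24n\int_{\Mbar_{1,n}}\Psi_{G_n,1,0}=n\cdot\omega_{G_n,1,0}$, which is $1$ for $n=1$ and $n\cdot 0^n=0$ for $n\ge2$, matching $\sum_{i=0}^{1}\binom ni 0^{n-i}$. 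So assume $k>0$ from now on.

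For $n=0$ the claim reads $\omega^\kappa_{G_0,g,m}=1$: indeed $\omega^\kappa_{G_0,g,m}=24^gg!\int_{\Mbar_{g,M}}\kappa_{3g-3+m}$, and $\kappa_{3g-3+m}$ is the top $\kappa$-class on $\Mbar_{g,M}$, which integrates to $\langle\tau_{3g-2}\rangle_g=1/(24^gg!)$ via the relation $\kappa_a^{\Mbar_{g,m}}=\pi^*\kappa_a^{\Mbar_{g,m-1}}+\psi_m^a$ and the string equation. For the inductive step I would fix $v_n\in V$, write $X_n=\Mbar_{g,V\sqcup M}$, $X_{n-1}=\Mbar_{g,(V\setminus\{v_n\})\sqcup M}$, let $\pi_n:X_n\to X_{n-1}$ forget $v_n$, and let $G_{n-1}$ be the edgeless graph on $V\setminus\{v_n\}$. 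Since $\pi_{\{v\}\cup M}$ factors through $\pi_n$ for $v\ne v_n$, we have $\prod_{v\ne v_n}\bar\psi_v=\pi_n^*\Psi_{G_{n-1},g,m}$; applying $\kappa_a^{X_n}=\pi_n^*\kappa_a^{X_{n-1}}+\psi_{v_n}^a$ with $a=3g-3+m$ and integrating gives
\begin{align*}
  \int_{X_n}\Psi_{G_n,g,m}\,\kappa_{3g-3+m}
  &=\int_{X_n}\bar\psi_{v_n}\,\pi_n^*\bigl(\Psi_{G_{n-1},g,m}\,\kappa_{3g-3+m}^{X_{n-1}}\bigr)\\
  &\quad+\int_{X_n}\bar\psi_{v_n}\,\psi_{v_n}^{3g-3+m}\,\pi_n^*\Psi_{G_{n-1},g,m}.
\end{align*}

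For the first integral I would compute $(\pi_n)_*\bar\psi_{v_n}$ using the comparison formula $\bar\psi_{v_n}=\psi_{v_n}-\sum_{\emptyset\ne A\subseteq V\setminus\{v_n\}}[D_{\{v_n\}\cup A}]$ (where $D_{\{v_n\}\cup A}$ carries a genus-$0$ bubble with exactly the markings $\{v_n\}\cup A$): one gets $(\pi_n)_*\psi_{v_n}=\kappa_0^{X_{n-1}}=2g-3+n+m$, while $(\pi_n)_*[D_{\{v_n\}\cup A}]$ vanishes for $\abs A\ge2$ (the map has positive-dimensional fibres) and equals $1$ for $\abs A=1$ (contracting the $\Mbar_{0,3}$-bubble gives an isomorphism onto $X_{n-1}$), so $(\pi_n)_*\bar\psi_{v_n}=k$, and by the projection formula the first integral is $\tfrac{k}{24^gg!}\,\omega^\kappa_{G_{n-1},g,m}$. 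For the second, expand $\bar\psi_{v_n}\psi_{v_n}^{3g-3+m}=\psi_{v_n}^{3g-2+m}-\sum_A[D_{\{v_n\}\cup A}]\psi_{v_n}^{3g-3+m}$; since $(\pi_n)_*\psi_{v_n}^{3g-2+m}=\kappa_{3g-3+m}^{X_{n-1}}$, the first piece contributes $\tfrac1{24^gg!}\,\omega^\kappa_{G_{n-1},g,m}$. Each remaining piece lives on $D_{\{v_n\}\cup A}\cong\Mbar_{0,\{v_n\}\sqcup A\sqcup\{\star_0\}}\times Z_A$ with $Z_A=\Mbar_{g,(V\setminus(\{v_n\}\sqcup A))\cup M\cup\{\star_1\}}$: there $\psi_{v_n}^{3g-3+m}$ restricts to the genus-$0$ factor, of dimension $\abs A-1$, so it vanishes unless $\abs A\ge d$; and for $v\in A$ the class $\bar\psi_v$ restricts to the pullback of $\psi_{\star_1}$ from $\Mbar_{g,\{\star_1\}\cup M}$ --- the \emph{same} class for every $v\in A$ --- so their product is a pullback of $\psi_{\star_1}^{\abs A}$, which vanishes unless $\abs A\le d$. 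Hence only $\abs A=d$ survives; for each of the $\binom{n-1}{d}$ such $A$ the $\Mbar_{0,d+2}$-factor contributes $\int\psi_{v_n}^{d-1}=1$, the top power $\psi_{\star_1}^d$ contributes $1/(24^gg!)$, and the residual $Z_A$-integral against $\prod_{v\notin\{v_n\}\sqcup A}\bar\psi_v$ --- identified, via the description of $\pi_{\{v\}\cup M}$ restricted to $D_{\{v_n\}\cup A}$ and the fact that dropping $\star_1$ is a birational morphism, with $\omega_{G_{n-1-d},g,m}=k^{n-1-d}$ from \eqref{eq:BaseCase} --- contributes that factor. So the second integral equals $\tfrac1{24^gg!}\bigl(\omega^\kappa_{G_{n-1},g,m}-\binom{n-1}{d}k^{n-1-d}\bigr)$, and multiplying by $24^gg!$ yields
\[
  \omega^\kappa_{G_n,g,m}=(k+1)\,\omega^\kappa_{G_{n-1},g,m}-\binom{n-1}{d}k^{n-1-d},
\]
which together with $\omega^\kappa_{G_0,g,m}=1$ solves, by Pascal's rule and induction, to $\omega^\kappa_{G_n,g,m}=\sum_{i=0}^{d}\binom ni k^{n-i}$, as desired.

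The main obstacle is the boundary analysis in the second integral --- pinning down which $\abs A$ contribute (the two opposing dimension bounds forcing $\abs A=d$) and evaluating the surviving term. This hinges on understanding how each $\bar\psi_v$ restricts to $D_{\{v_n\}\cup A}$, in particular that all $v\in A$ give the \emph{same} node $\psi$-class, so that their product collapses to a single top power of $\psi_{\star_1}$, and on identifying the residual genus-$g$ integral with a lower instance of the base case, which uses the birational invariance of the pushforward of the fundamental class under the forgetful map that drops $\star_1$. Everything else is a formal manipulation with the standard $\psi$- and $\kappa$-forgetful relations and the value $\langle\tau_{3g-2}\rangle_g=1/(24^gg!)$.
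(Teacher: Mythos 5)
Your proof is correct and follows essentially the same route as the paper's (sketched) proof: both establish, by comparing $\Psi_{G_n,g,m}$ on $\Mbar_{g,V\sqcup M}$ with the corresponding class after forgetting one vertex, the identical recursion $\omega^\kappa_{G_n,g,m}=(2g-2+m+1)\,\omega^\kappa_{G_{n-1},g,m}-\binom{n-1}{3g-2+m}(2g-2+m)^{n-1-(3g-2+m)}$, and then conclude by induction via Pascal's rule. The only (cosmetic) difference is that the paper converts $\kappa_{3g-3+m}$ into a power of a $\psi$-class by pulling back to $\Mbar_{g,n+m+1}$, whereas you invoke the comparison $\kappa_a^{X_n}=\pi_n^*\kappa_a^{X_{n-1}}+\psi_{v_n}^a$ directly on the vertex-forgetting map; your boundary analysis (the two opposing dimension bounds forcing $\abs{A}=3g-2+m$) correctly fills in the details the paper leaves implicit.
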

\begin{proof}[Proof sketch]
    Pull back to $\Mbar_{g,n+m+1}$ to replace $\kappa_{3g-3+m}$ with a $\psi$-class, then use Facts \ref{fact:PsiPullback}--\ref{fact:Surplus} as in the proof of Theorem \ref{thm:main} to prove the recursion \begin{align*}
        \omega_{G_n,g,m}&=(2g-2+m+1)\omega_{G_{n-1},g,m}-\binom{n-1}{3g-3+m+1}(2g-2+m)^{n-1-(3g-3+m+1)},
    \end{align*}
    from which the Proposition follows by a straightforward induction.
\end{proof}
One may check that $\omega^{\kappa}_{G,g,m}$ does \emph{not} satisfy the deletion-contraction formula.
\begin{question}
    Does $\omega^{\kappa}_{G,g,m}$ satisfy a recursion similar to the deletion-contraction formula? Does $\Psi_{G,g,m}$?
\end{question}
\begin{remark}
    One might interpolate between $\omega_{G,g,m}$ and $\omega^{\kappa}_{G,g,m}$ by considering numbers of the form $$\int_{\Mbar_{g,V\sqcup M}}\Psi_{G,g,m}\cdot\kappa_a\cdot\pi_{M}^*(\kappa_{3g-3+m-a}).$$
\end{remark}

\subsubsection{Generalization to matroids}
A matroid $M$ has a characteristic polynomial $p_M(k)$, which specializes (after a small correction) to the chromatic polynomial of a graph in the case of a graphic matroid, and which satisfies a deletion-contraction formula.
\begin{question}
    Do there exist classes $\Psi_{M,g,m}\in A^*(\Mbar_{g,n})$ (for appropriate $n$), and corresponding intersection numbers $\omega_{M,g,m}$, that recover the values of $p_M(k)$ at negative integers?
\end{question}

\subsection{Acknowledgements} The first author would like to thank Leonie Kayser and  Andreas Kretschmer for helpful discussions related to the likelihood correspondence. The second author is grateful to R. Cavalieri, A. Pixton, R. Ramadas, and D. Ross for useful discussions and suggestions --- in particular around the base case of Theorem \ref{thm:main} (Ramadas), the genus-1 case (Cavalieri), and the proof of Definition/Theorem \ref{defthm:DigraphPolynomials} (Pixton, Ross). We also thank the anonymous referees for helpful suggestions on how to improve the exposition. This project began during the second author's visit to the Max Planck Institute for Mathematics in the Natural Sciences, in Leipzig, for the conference ``Combinatorial algebraic geometry from physics''. The second author thanks the MPI for their hospitality.

\section{Basic geometry and intersection theory of \texorpdfstring{$\Mbar_{g,n}$}{M\_{g,n}-bar}}\label{sec:MgnBasics}
Here we summarize the facts that are needed for the proof of Theorem \ref{thm:main}. Everything in this section is well-known --- see \cite{KockNotes} for details unless otherwise specified.

Let $g$ be a nonnegative integer, and let $P$ be a finite set such that $2g-2+\abs{P}>0.$ Let $\Mbar_{g,P}=\Mbar_{g,\abs{P}}$ denote the moduli space of stable curves of genus $g$ with marked points indexed by $P$.

\begin{notation}\label{not:BasicMgnNotation} We will use the following notations:
\begin{enumerate}
    \item For $P'\subseteq P$ with $2g-2+\abs{P'}>0,$ let $$\pi_{P'}:\Mbar_{g,P}\to\Mbar_{g,P'}$$ denote the (flat) ``\textbf{forgetful morphism}'' that removes the marked points \emph{not} in $P'$, and stabilizes the curve if necessary.
    \item For $i\in P,$ let $\psi_i$ denote the \textbf{cotangent class} at the $i$th marked point.
    \item A $P$-marked stable genus-$g$ curve $C$ has a dual graph $\Gamma$, whose vertices $\alpha$ correspond to the irreducible components $C_\alpha\subseteq C$, and whose edges correspond to nodes of $C$. There is a natural ``weight function'' on the vertices that assigns to $\alpha$ the  genus $g_\alpha=g(C_\alpha)$, and a natural way to attach extra ``half-edges'' indexed by $P$ to vertices of $\Gamma$. The resulting structure is a ``\textbf{stable $P$-marked genus-$g$ weighted graph}''. That is, $\Gamma$ satisfies $b_1(\Gamma)+\sum_{\alpha\in V(\Gamma)}g_\alpha=g,$ where $b_1(\Gamma)$ is the first Betti number of $\Gamma$, and every vertex $\alpha\in V(\Gamma)$ satisfies $2g_\alpha-2+\val(\alpha)>0,$ where $\val(\alpha)$ includes the half-edges attached to $\alpha$. (See e.g. \cite[App. A]{GraberPandharipande2003}.) We write $\HE(\Gamma,\alpha)$ for the set of half-edges of $\Gamma$ incident to a vertex $\alpha\in V(\Gamma)$ (also called ``flags'' in the literature), which includes marked half-edges attached to $\alpha$ as well as germs of actual edges.
    \item For any stable $P$-marked genus-$g$ weighted graph $\Gamma,$ define $$\Mtilde_{\Gamma}=\prod_{\alpha\in V(\Gamma)}\Mbar_{g_\alpha,\HE(\Gamma,\alpha)}.$$ There is a natural morphism $\iota_\Gamma:\Mtilde_{\Gamma}\to\Mbar_{g,P}$ whose image is the (closed) \textbf{boundary stratum} $\Mbar_{\Gamma}\subseteq\Mbar_{g,P}$ corresponding to $\Gamma,$ i.e. the closure of the locus of stable curves with dual graph $\Gamma.$ The map $\iota_\Gamma$ induces an isomorphism $\Mtilde_\Gamma/\Aut(\Gamma)\to\Mbar_\Gamma$. 
    \label{not:DecomposeBoundaryStratum}
    \item In the case $$\Gamma=\begin{tikzpicture}[baseline=-0.65ex]
            \draw (0,0) -- (1,0);
            \foreach \th in {-60,-40,-20,0,20,40,60} {
            \draw (0,0)--++(180+\th:.6);
            };
            \draw (-.6,0) node[left] {$I$};
            \foreach \th in {-60,-40,-20,0,20,40,60} {
            \draw (1,0)--++(\th:.6);
            };
            \draw (1.6,0) node[right] {$P\setminus I$};
            \filldraw[fill=white] (0,0) circle(.25);
            \filldraw[fill=white] (1,0) circle(.25);
            \draw (0,0) node {$0$};
            \draw (1,0) node {$g$};
        \end{tikzpicture}$$ where $I\subseteq P$ with $\abs{I}\ge2$, we use the special notation $D_{I}:=\Mbar_\Gamma$ and $\iota_{I}:=\iota_\Gamma$.\label{not:BoundaryDivisor} We abuse notation slightly and also denote by $D_I$ the corresponding divisor \emph{class}.
    \item Below we will also introduce pictorial notations for certain sums of (pushforwards of $\psi$-classes from) boundary strata. The most basic case is that we write the stable graph $\Gamma$ as a placeholder for the class $[\Mbar_{\Gamma}]\in H^*(\Mbar_{g,P}).$
\end{enumerate}
\end{notation}

\begin{remark}\label{rem:Stacks}
    Some stack-theoretic carefulness is needed to work with boundary strata $\Mbar_{\Gamma}$ such that $\Aut(\Gamma)$ is nontrivial --- however, this is essentially irrelevant for us, as almost all boundary strata used in this paper manifestly have trivial $\Aut(\Gamma)$. (The only exception is in the base case of the proof of Theorem \ref{thm:main}.)
\end{remark}

\begin{fact}[Pulling back $\psi$-classes along forgetful maps, {\cite[Lem. 1.3.1]{KockNotes}}]\label{fact:PsiPullback}
    For distinct $i,j\in P,$ the following ``comparison theorem'' relates $\psi_i\in H^2(\Mbar_{g,P})$ to the pullback of $\psi_i\in H^2(\Mbar_{g,P\setminus\{j\}})$ along the map that forgets $j$: $$\psi_i=\pi_{P\setminus\{j\}}^*\psi_i+D_{\{ij\}}.$$ Pictorially, we write
    \begin{align}\label{eq:PsiPullbackSingle}
        \boxed{\psi_i\quad=\quad\pi_{P\setminus\{j\}}^*\psi_i\quad+\quad
        \begin{tikzpicture}[baseline=-0.65ex]
            \draw (0,0) -- (1,0);
            \draw (0,0)--++(135:.6);
            \draw (135:.6) node[left] {$i$};
            \draw (0,0)--++(225:.6);
            \draw (225:.6) node[left] {$j$};
            \foreach \th in {-60,-40,-20,0,20,40,60} {
            \draw (1,0)--++(\th:.6);
            };
            \draw (1.6,0) node[right] {$P\setminus\{i,j\}$};
            \filldraw[fill=white,thick] (0,0) circle(.25);
            \filldraw[fill=white,thick] (1,0) circle(.25);
            \draw (0,0) node {$0$};
            \draw (1,0) node {$g$};
        \end{tikzpicture}}
    \end{align}
    Applying \eqref{eq:PsiPullbackSingle} repeatedly, we get a comparison formula for pullbacks of $\psi$-classes along general forgetful maps. Let $P'\subseteq P$ with $i\in P'$, such that $2g-2+\abs{P'}>0.$ Then $$\psi_i=\pi_{P'}^*\psi_i+\sum_{I}D_{I},$$ where $I$ ranges over subsets of $P\setminus P'$ such that $i\in I$ and $\abs{I}\ge2$. We depict this identity pictorially as
    \begin{align*}
        \boxed{\psi_i\quad=\quad\pi_{P'}^*\psi_i\quad+\quad
        \begin{tikzpicture}[baseline=-0.65ex]
            \draw (0,0) -- (1,0);
            \draw (0,0)--++(135:.6);
            \draw (135:.6) node[left] {$i$};
            \foreach \th in {-60,-40,-20,0,20,40,60} {
            \draw (1,0)--++(\th:.6);
            };
            \draw (1.6,0) node[right] {$P'\setminus i$};
            \filldraw[fill=white,thick] (0,0) circle(.25);
            \filldraw[fill=white,thick] (1,0) circle(.25);
            \draw (0,0) node {$0$};
            \draw (1,0) node {$g$};
        \end{tikzpicture}}
    \end{align*}
    Here the picture means ``sum over the ways to attach all unassigned marked points at vertices of the graph, such that the resulting graph is stable.''
\end{fact}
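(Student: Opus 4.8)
The statement has two parts: the boxed single-point identity $\psi_i=\pi_{P\setminus\{j\}}^*\psi_i+D_{\{ij\}}$, and the general comparison formula, which follows from the single-point one by iteration. So the plan is to prove the single-point case by a direct geometric comparison of cotangent lines, and then bootstrap.

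For the single-point case, I would identify the forgetful morphism $\pi:=\pi_{P\setminus\{j\}}\colon\Mbar_{g,P}\to\Mbar_{g,P\setminus\{j\}}$ with the universal curve over $\Mbar_{g,P\setminus\{j\}}$, with the forgotten point $j$ playing the role of the moving point on the fiber. Under this identification the marked point $i$ defines a section $\sigma_i$ whose image is exactly the boundary divisor $D_{\{ij\}}$ (the locus where $i$ and $j$ have bubbled off onto a rational tail, which is irreducible since $\Mbar_{0,3}$ is a point). The key observation is that away from $D_{\{ij\}}$, forgetting $j$ leaves the curve near $i$ — and hence its cotangent line at $i$ — unchanged, so the line bundles underlying $\psi_i$ and $\pi^*\psi_i$ literally agree on $\Mbar_{g,P}\setminus D_{\{ij\}}$. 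Therefore $\psi_i-\pi^*\psi_i$ is supported on the irreducible divisor $D_{\{ij\}}$, so $\psi_i-\pi^*\psi_i=a\,D_{\{ij\}}$ for some integer $a$.

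To pin down $a=1$, I would restrict this identity to $D_{\{ij\}}$, which via $\sigma_i$ is isomorphic to $\Mbar_{g,P\setminus\{j\}}$, and use three standard inputs: first, $\psi_i|_{D_{\{ij\}}}=0$, since on this stratum $i$ sits on a three-pointed genus-zero component whose moduli is a point; second, $\sigma_i^*\pi^*\psi_i=\psi_i$, because $\pi\circ\sigma_i=\mathrm{id}$; and third, $\sigma_i^*D_{\{ij\}}=-\psi_i$, the computation of the first Chern class of the normal bundle of a boundary divisor as minus the sum of the two cotangent-line classes at the node (the $\Mbar_{0,3}$-side class being zero). Substituting yields $0=\psi_i-a\psi_i$, hence $a=1$ as $\psi_i\neq0$.

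For the general formula, write $P\setminus P'=\{j_1,\dots,j_r\}$ and apply the single-point case repeatedly along the tower $\Mbar_{g,P}\to\Mbar_{g,P\setminus\{j_1\}}\to\cdots\to\Mbar_{g,P'}$, pushing the accumulated corrections forward at each stage. The only extra fact needed is that the preimage of a boundary divisor $D_I$ under a one-point forgetful map $\pi_{j_k}$ is $D_I+D_{I\cup\{j_k\}}$ (the forgotten point goes either on the main component or on the rational tail). A short induction then shows the total correction is $\sum_{\emptyset\neq S\subseteq P\setminus P'}D_{\{i\}\cup S}$, i.e. the sum over all $I\subseteq P$ with $i\in I$, $I\setminus\{i\}\subseteq P\setminus P'$, and $|I|\geq2$, as claimed; along the way one checks that $P\setminus I$ (with genus $g$) is always stable, so each $D_I$ really is a divisor. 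I expect the main obstacle to be input three above — the normal-bundle/self-intersection identity — together with the minor stack-theoretic care needed when restricting classes to boundary strata.
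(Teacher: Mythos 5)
The paper does not actually prove this Fact --- it is imported from \cite[Lem.~1.3.1]{KockNotes} --- so there is no internal proof to compare against; your argument is essentially the standard one for that lemma. The structure is sound: identify $\pi_{P\setminus\{j\}}$ with the universal curve, note that $\mathcal{L}_i$ and $\pi_{P\setminus\{j\}}^*\mathcal{L}_i$ can only differ along the image of the section $\sigma_i$, which is the irreducible divisor $D_{\{ij\}}$, and determine the coefficient by restriction. Your three ``standard inputs'' are precisely Fact \ref{fact:PsiRestrict} and the self-intersection case of Fact \ref{fact:ProductOfBoundary}, neither of which is proved using the present Fact, so there is no circularity; and the iteration step, using the one-point case of Fact \ref{fact:BoundaryPullback} and the bookkeeping by the largest forgotten index in each subset, correctly produces the sum over all $I$ with $i\in I$, $I\setminus\{i\}\subseteq P\setminus P'$, $|I|\ge 2$.

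There is, however, one genuine (if small) gap. Your determination of the coefficient ends with ``$0=(1-a)\psi_i$, hence $a=1$ as $\psi_i\ne 0$,'' where this $\psi_i$ lives on $\Mbar_{g,P\setminus\{j\}}$. That nonvanishing fails exactly when $\Mbar_{g,P\setminus\{j\}}=\Mbar_{0,3}$, i.e.\ $g=0$ and $|P|=4$, where $\psi_i=0$ and the restriction argument reads $0=0$, determining nothing. This case is not vacuous for the paper: it occurs at the last stage of your tower whenever $g=0$ and $|P'|=3$, which is precisely what yields Fact \ref{fact:WritePsiAsBoundary}, used repeatedly in Section \ref{sec:Proof}. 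You need a separate one-line argument there (on $\Mbar_{0,4}\cong\P^1$ both $\psi_i$ and the reduced point $D_{\{ij\}}$ have degree $1$ while $\pi^*\psi_i=0$), and even in the remaining cases the assertion $\psi_i\ne 0$ deserves a word of justification, e.g.\ positivity of the top self-intersection $\int\psi_i^{\dim}$. A cleaner variant that avoids the issue entirely: the differential of the stabilization map gives a canonical morphism $\pi_{P\setminus\{j\}}^*\mathcal{L}_i\to\mathcal{L}_i$ which is an isomorphism off $D_{\{ij\}}$ and vanishes to order exactly one along it, giving $a=1$ uniformly. Finally, ``pushing the accumulated corrections forward'' should read ``pulling back''; the substance of that step is correct.
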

\begin{fact}[Pulling back boundary strata, see {\cite[Fact 3]{Keel1992}}]\label{fact:BoundaryPullback}
    Let $P'\subseteq P$, and let $I\subseteq P'$ with $2\le\abs{I}\le\abs{P'}-2$. Then $$\pi_{P'}^*D_I=\sum_{J}D_J,$$ where $J$ ranges over subsets of $P$ containing $I$ and \emph{no other} elements of $P'$. Using the same pictorial system as above (with the same implicit sum taken), we get:
    \begin{align*}
        \boxed{\pi_{P'}^*D_I\quad=\quad
        \begin{tikzpicture}[baseline=-0.65ex]
            \draw (0,0) -- (1,0);
            \draw (0,0) node {$\bullet$};
            \draw (1,0) node {$\bullet$};
            \foreach \th in {-60,-40,-20,0,20,40,60} {
            \draw (1,0)--++(\th:.6);
            \draw (0,0)--++(180+\th:.6);
            };
            \draw (1.6,0) node[right] {$P'\setminus I$};
            \draw (-.6,0) node[left] {$I$};
            \filldraw[fill=white,thick] (0,0) circle(.25);
            \filldraw[fill=white,thick] (1,0) circle(.25);
            \draw (0,0) node {$0$};
            \draw (1,0) node {$g$};
        \end{tikzpicture}}
    \end{align*}
    This formula generalizes to pullbacks of arbitrary boundary strata. Let $\Gamma$ be a stable $P'$-marked genus-$g$ weighted graph. Then 
    \begin{align}\label{eq:PullbackOfBoundaryStrata1}
        \pi_{P'}^*(\iota_{\Gamma})_*([\Mtilde_\Gamma])=\sum_{\Lambda}(\iota_{\Lambda})_*([\Mtilde_{\Lambda}]),
    \end{align}
    where $\Lambda$ runs over the set of stable $P$-marked genus-$g$ graphs obtained by attaching the elements of $P\setminus P'$ to vertices of $\Gamma.$ This may be stated in terms of classes of boundary strata: 
     \begin{align*}
         \pi_{P'}^*([\Mbar_{\Gamma}])=\sum_{\Lambda}\frac{\lvert\Aut(\Lambda)\rvert}{\abs{\Aut\Gamma}}[\Mbar_{\Lambda}].
     \end{align*} 
     Fact \ref{fact:BoundaryPullback} is well-known to experts, but we do not know of a proof in the literature. The genus-zero case is proved in \cite[Fact 3]{Keel1992}, and the general proof is similar.
    \begin{proof}[Proof of Fact \ref{fact:BoundaryPullback}]
        By induction we may assume that $\abs{P'}=\abs{P}-1.$ Then a stable graph $\Lambda$ as above is specified by the vertex of $\Gamma$ where we attach the new marking. By \cite[Sec. 2]{Knudsen1983}, the map $\pi_{P'}:\Mbar_{g,P}\to\Mbar_{g,P'}$ is naturally identified with the universal curve $\mathcal C_{g,P'}\to\Mbar_{g,P'}$ over $\Mbar_{g,P'}$, and similarly $\bigsqcup_{\Lambda}\Mtilde_\Lambda\to\Mtilde_\Gamma$ is naturally identified with $\bigsqcup_{\alpha\in V(\Gamma)}\pr_\alpha^*\mathcal C_{g_\alpha,\HE(\Gamma,\alpha)}$, where $\pr_\alpha:\Mtilde_\Gamma\to\Mbar_{g_\alpha,\HE(\Gamma,\alpha)}$ is the projection onto the $\alpha$-th factor. 

        Thus we have a commutative diagram
        $$\begin{tikzcd}
            {\displaystyle \bigsqcup_{\alpha\in V(\Gamma)}\pr_\alpha^*\mathcal C_{g_\alpha,\HE(\Gamma,\alpha)}}\arrow[r,"\phi_1"]
            \arrow[d,equal]&\iota_{\Gamma}^*\mathcal C_{g,P'}\arrow[r]\arrow[d,equal]&\mathcal C_{g,P'}\arrow[d,equal]\\
            {\displaystyle \bigsqcup_{\Lambda}\Mtilde_\Lambda}\arrow[r,"\phi_2"]\arrow[dr]&\iota_{\Gamma}^*\Mbar_{g,P}\arrow[r,"\phi_3"]\arrow[d]&\Mbar_{g,P}\arrow[d,"\pi_{P'}"]\\
            &\Mtilde_\Gamma\arrow[r,"\iota_\Gamma",swap]&\Mbar_{g,P'}
        \end{tikzcd}$$
        where the lower right square is Cartesian. It is clear from these identifications (1) that $\phi_3\circ\phi_2=\bigsqcup_{\Lambda}\iota_\Lambda$, and (2) that the map labeled $\phi_1$ is simply the normalization map that glues together the irreducible components of $\iota_\Gamma^*\mathcal C_{g,P'}$. In particular, $\phi_2$ is birational, so the push-pull formula gives:
    \begin{align*}
        \pi_{P'}^*(\iota_{\Gamma})_*([\Mtilde_\Gamma])&=(\phi_3)_*([\iota_\Gamma^*\mathcal C_{g,P'}])\\
        &=\sum_{\Lambda}(\phi_3\circ\phi_2)_*([\Mtilde_\Lambda])\\
        &=\sum_{\Lambda}(\iota_{\Lambda})_*([\Mbar_{\Lambda}]).\qedhere
    \end{align*}

\end{proof}
    
\end{fact}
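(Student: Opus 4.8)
The plan is to reduce to the case of a one-point forgetful morphism and then use Knudsen's description of such a morphism as the universal curve to read the pullback off geometrically. Forgetting the points of $P\setminus P'$ one at a time writes $\pi_{P'}$ as a composite of one-point forgetful maps, and since flat pullback is functorial it suffices to treat $P=P'\sqcup\{q\}$. In that case $L$ consists of the graphs obtained from $\Gamma$ by attaching a half-edge labelled $q$ to a single vertex; writing $\Lambda_\alpha$ for the graph in which $q$ is attached at $\alpha$, the set $L=\{\Lambda_\alpha\}_{\alpha\in V(\Gamma)}$ is canonically in bijection with $V(\Gamma)$.

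By \cite{Knudsen1983}, $\pi_{P'}\colon\Mbar_{g,P}\to\Mbar_{g,P'}$ is canonically the universal curve $\mathcal C_{g,P'}\to\Mbar_{g,P'}$, with $q$ the moving point. Base-changing this family along $\iota_\Gamma\colon\Mtilde_\Gamma\to\Mbar_{g,P'}$ produces a Cartesian square whose total space $\iota_\Gamma^*\mathcal C_{g,P'}$ is the restriction of the universal curve to the stratum; its irreducible components are indexed by $V(\Gamma)$, the component over $\alpha$ being canonically $\pr_\alpha^*\mathcal C_{g_\alpha,\HE(\Gamma,\alpha)}$ — which, since adding a point to a factor is again the universal curve, is canonically $\Mtilde_{\Lambda_\alpha}$ — and the map this component induces to $\Mbar_{g,P}$ is $\iota_{\Lambda_\alpha}$. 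Since $\pi_{P'}$ is flat and $\iota_\Gamma$ is proper, base change around the square gives
\[
    \pi_{P'}^*(\iota_\Gamma)_*[\Mtilde_\Gamma]=(\pi')_*[\iota_\Gamma^*\mathcal C_{g,P'}],
\]
where $\pi'\colon\iota_\Gamma^*\mathcal C_{g,P'}\to\Mbar_{g,P}$ is the remaining projection. Decomposing the right-hand cycle into its components — equivalently, pushing forward along the normalization map $\bigsqcup_\alpha\pr_\alpha^*\mathcal C_{g_\alpha,\HE(\Gamma,\alpha)}\to\iota_\Gamma^*\mathcal C_{g,P'}$, which maps birationally onto each component — yields $\sum_{\Lambda\in L}(\iota_\Lambda)_*[\Mtilde_\Lambda]$. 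Translating this into classes of boundary strata only introduces the degrees $\abs{\Aut\Lambda}/\abs{\Aut\Gamma}$ of the maps $\iota_\Lambda,\iota_\Gamma$ onto their images, giving the displayed formula; the special case $\Gamma=D_I$ recovers the first assertion of the Fact.

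The step I expect to cost the most effort is the middle one: showing that the restriction of the universal curve to $\Mbar_\Gamma$ breaks up, component by component and each with multiplicity one, into the universal curves over the factors $\Mbar_{g_\alpha,\HE(\Gamma,\alpha)}$, and that the $\alpha$-th piece maps onto the correct stratum $\Mbar_{\Lambda_\alpha}$. Two points need attention: the stabilization that occurs when $q$ degenerates toward a node of a rational component (this affects only the closure, not the generic point, hence not the cycle, since stable curves have reduced fibers over $\Mbar_{g,P'}$), and the stack-theoretic treatment of automorphisms — which is why the cleanest formulation is the one via the gluing maps $\iota_\Gamma,\iota_\Lambda$ rather than directly via stratum classes. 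The genus-zero case of this argument is \cite[Fact 3]{Keel1992}, and the general case follows the same pattern.
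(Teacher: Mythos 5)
Your proposal is correct and follows essentially the same route as the paper's proof: induction to the one-point forgetful map, Knudsen's identification of $\pi_{P'}$ with the universal curve, base change along $\iota_\Gamma$ in the resulting Cartesian square, and decomposition of $\iota_\Gamma^*\mathcal C_{g,P'}$ into components via normalization. Your added remarks on stabilization near nodes and on automorphisms address exactly the subtleties the paper leaves implicit.
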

\begin{fact}[Restricting $\psi$-classes to boundary strata]\label{fact:PsiRestrict}
    The restriction of $\psi_i$ to a boundary stratum $\Mbar_{\Gamma}$ is given by ``$\psi_i$ on the vertex of $\Gamma$ containing $i$.'' Precisely, let $\Gamma$ be a stable $P$-marked genus-$g$ weighted graph, let $i\in P$, and let $\alpha_i\in V(\Gamma)$ be the vertex at which $i$ is attached. Then
    \begin{align}\label{eq:PsiRestrict1}
        \iota_\Gamma^*\psi_i=\pr_{\alpha_i}^*\psi_i\in H^2(\Mtilde_\Gamma),
    \end{align} where $\pr_{\alpha_i}:\Mtilde_\Gamma\to\Mbar_{g_{\alpha_i},\HE(\Gamma,\alpha_i)}$ is the projection. Note that on the left side of \eqref{eq:PsiRestrict1},  $\psi_i$ is a class in $H^2(\Mbar_{g,P})$, whereas on the right side, $\psi_i$ is a class in $H^2(\Mbar_{g_{\alpha_i},\HE(\Gamma,\alpha_i)})$.
    
    More generally, the restriction of $\pi_{P'}^*\psi_i$ to $\Mbar_{\Gamma}$ is the $\psi$-class of the half-edge where $i$ lands under \emph{forgetting and stabilizing}, pulled back along the appropriate forgetful map. This is easiest to understand pictorially, and is illustrated in Figure \ref{fig:PsiRestrict}; we now introduce the notation required to state it precisely. 
    
    Let $P'\subseteq P$ be a subset with $2g-2+\abs{P'}>0,$ and let $i\in P'.$ Let $\Gamma'$ denote the stable $P'$-marked genus-$g$ graph obtained from $\Gamma$ by forgetting the marks in $P\setminus P'$ and stabilizing (i.e. successively contracting edges incident to vertices with weight 0 of valence $\le2$, see e.g. \cite[Sec. 1.3.3]{KockVainsencher}). 
    Stabilization induces a natural weight-preserving inclusion of vertex sets $$\st_{\Gamma'}^{\mathrm{vert}}:V(\Gamma')\into V(\Gamma),$$ and for any vertex $\epsilon\in V(\Gamma'),$ induces a natural inclusion $$\st_{\Gamma',\epsilon}^{\mathrm{he}}:\HE(\Gamma',\epsilon)\to\HE(\Gamma,\st_{\Gamma'}^{\mathrm{vert}}(\epsilon)).$$ Let $\alpha_i'\in V(\Gamma')$ be the vertex at which the marked half-edge $i$ is attached, let $$\alpha_{P',i}=\st_{\Gamma'}^{\mathrm{vert}}(\alpha_i')\in V(\Gamma),$$ and let $$q_{P',i}=\st_{\Gamma',\alpha_i'}^{\mathrm{he}}(i)\in\HE(\Gamma,\alpha_{P',i}).$$ Let $Q_{P',i}\subseteq\HE(\Gamma,\alpha_{P',i})$ denote the set of half-edges that come from $\Gamma'$, i.e. $Q_{P',i}$ is the image of $\st_{\Gamma',\alpha_i'}^{\mathrm{he}}$. (In particular, $q_{P',i}\in Q_{P',i}$.) Then 
    \begin{align}\label{eq:PsiRestrict2}
        \iota_\Gamma^*(\pi_{P'}^*\psi_i)=\pr_{\alpha_{P',i}}^*\pi_{Q_{P',i}}^*\psi_{q_{P',i}}\in H^2(\Mtilde_\Gamma).
    \end{align}
    
    \begin{figure}
        \centering
        \begin{tikzpicture}[scale=1.5]
            \draw[very thick] (0,0) -- (2,0);
            \draw[very thick] (1,0)--(1,-1);
            \draw[very thick] (1,-1)--++(-30:1);
            \draw (0,0) node {$\bullet$};
            \draw (1,0) node {$\bullet$};
            \draw (2,0) node {$\bullet$};
            \draw (1,-1) node {$\bullet$};
            \draw (1,-1)++(-30:1) node {$\bullet$};
            \draw[very thick,densely dashed] (1,-1)++(-30:1)--++(0:.6);
            \draw (1,-1)++(-30:1)--++(-60:.6);
            \foreach \th in {-60,-30,0,30,60} {
            \draw[very thick,densely dashed] (0,0)--++(180+\th:.6);
            };
            \foreach \th in {-60,-20,20,60} {
            \draw[very thick,densely dashed] (2,0)--++(\th:.6);
            };
            \draw[very thick,densely dashed] (1,-1)--++(-90-60:.6);
            \draw[very thick,densely dashed] (1,-1)--++(-90:.6);
            \draw[very thick,densely dashed] (1,-1)--++(-90+20:.6);
            \draw[red,very thick] (1,-1)--++(-90+60:.5);
            \draw[red,very thick] (1,-1)--++(90:.5);
            \draw (240:.6) node[below] {$i$};
            \draw[line width=3 pt] (240:.6)--(0,0);
            \draw[line width=3 pt,red] (1,-1)--++(-90-40:.6);
            \draw[line width=3 pt,red] (1,-1)--++(-90-20:.6);
            \draw[line width=3 pt,red] (1,-1)--++(-90+40:.6);
            \draw[line width=3 pt] (1,-1)++(-90+60:1)--++(-60:.6);
            \draw (2.5,-1) node {$\alpha_{P',i}$};
            \draw[->] (2.2,-.95) to[out=170,in=10] (1.2,-.9);
            \draw (-.5,-1.5) node {$q_{P',i}$};
            \draw[->] (-.4,-1.4) to[out=45,in=180] (.95,-.7);
            \filldraw[fill=white,very thick] (0,0) circle(.17);
            \filldraw[fill=white,very thick] (1,0) circle(.17);
            \filldraw[fill=white,very thick] (2,0) circle(.17);
            \filldraw[fill=white,very thick] (1,-1) circle(.17);
            \filldraw[fill=white,very thick] (1,-1)++(-30:1) circle(.17);
            \draw (0,0) node {$0$};
            \draw (1,0) node {$0$};
            \draw (2,0) node {$0$};
            \draw (1,-1) node {$0$};
            \draw (1,-1)++(-30:1) node {$1$};
            \draw (1,-2.2) node {\Large$\Gamma$};
            \draw (4,-1.2) node {\Large$\hookleftarrow$};
        \end{tikzpicture}
        \quad\quad\quad\quad
        \begin{tikzpicture}[scale=1.5]
            \draw[very thick] (1,-1)--++(-30:1);
            \draw (1,-1) node {$\bullet$};
            \draw (1,-1)++(-30:1) node {$\bullet$};
            \draw (2.5,-1) node {$\alpha_i'$};
            \draw[->] (2.35,-.95) to[out=170,in=10] (1.2,-.9);
            \draw (1,-1)++(90:.6) node[above] {$i$};
            \draw[line width=3 pt] (1,-1)--++(-90-40:.6);
            \draw[line width=3 pt] (1,-1)--++(-90-20:.6);
            \draw[line width=3 pt] (1,-1)--++(-90+40:.6);
            \draw[line width=3 pt] (1,-1)--++(90:.6);
            \draw[line width=3 pt] (1,-1)++(-90+60:1)--++(-60:.6);
            \filldraw[fill=white,very thick] (1,-1) circle(.17);
            \filldraw[fill=white,very thick] (1,-1)++(-30:1) circle(.17);
            \draw (1,-1) node {$0$};
            \draw (1,-1)++(-30:1) node {$1$};
            \draw (1,-2.2) node {\Large$\Gamma'$};
        \end{tikzpicture}
        \caption{An illustration of Fact \ref{fact:PsiRestrict}. We are pulling back $\pi_{P'}^*\psi_i$ along the boundary stratum map $\iota_\Gamma:\Mtilde_\Gamma\to\Mbar_{g,P}$, where $\Gamma$ is the stable marked genus-1 weighted graph on the left, $P'\subset P$ is the set of solid marked half-edges. The stabilization $\Gamma'$ is shown on the right. The class $\iota_\Gamma^*\pi_{P'}^*\psi_i$ lives on the vertex $\alpha_{P',i}$, where it is the $\psi$-class at the half-edge $q_{P',i}$, pulled back along the forgetful map that remembers the red half-edges (since those are the ones that survive under stabilization). In particular, $$\Mtilde_\Gamma\cong\Mbar_{0,6}\times\Mbar_{0,3}\times\Mbar_{0,5}\times\Mbar_{0,8}\times\Mbar_{1,3},$$ and $\iota_\Gamma^*\pi_{P'}^*\psi_i$ is pulled back from $\Mbar_{0,8}$. In our notation, the inclusions $\st_{P'}^\mathrm{vert}$ and $\st_{P',\epsilon}^{\mathrm{he}}$ (for both choices of $\epsilon\in V(\Gamma')$) are ``translation to the left''.}
        
        % Here $\Gamma$ is the stable $P$-marked genus-1 weighted graph on the left, $P'\subset P$ is the set of thick marked half-edges in $\Gamma$, and the stabilization $\Gamma'$ is on the right. The vertex $\alpha_{P',i}\in V(\Gamma)$ and the half-edge $q_{P',i}\in\HE(\Gamma,\alpha_{P',i})$ are as shown, and $Q_{P',i}\subset\HE(\Gamma,\alpha_{P',i})$ consists of the five red half-edges. The natural inclusions $\st_{P'}^\mathrm{vert}$ and $\st_{P',\epsilon}^{\mathrm{he}}$ (for both choices of $\epsilon\in V(\Gamma')$) are ``translation to the left''. In this case we have $$\Mtilde_\Gamma\cong\Mbar_{0,6}\times\Mbar_{0,3}\times\Mbar_{0,5}\times\Mbar_{0,8}\times\Mbar_{1,3},$$ and $\iota_\Gamma^*\psi_i$ is pulled back from $\Mbar_{0,8}$. The class on $\Mbar_{0,8}$ in question is itself the pullback of $\psi_{q_{P',i}}$ along the forgetful map $\Mbar_{0,8}\to\Mbar_{0,5}$ that remembers the red half-edges.}
        
        \label{fig:PsiRestrict}
    \end{figure}
    We will often use Fact \ref{fact:PsiRestrict} in the following form, where we have used the projection formula: $$(\pi_{P_1'}^*\psi_{i_1})\cdots(\pi_{P_r'}^*\psi_{i_r})\cdot(\iota_\Gamma)_*([\Mtilde_{\Gamma}])=(\iota_\Gamma)_*((\pr_{\alpha_{P_1',i_1}}^*\pi_{Q_{P_1',i_1}}^*\psi_{q_{P_1',i_1}})\cdots(\pr_{\alpha_{P_r',i_r}}^*\pi_{Q_{P_r',i_r}}^*\psi_{q_{P_r',i_r}})).$$

    We provide a (straightforward) proof of \eqref{eq:PsiRestrict2}, since we do not know a reference. (The case where $g=0$ and $\Gamma$ has one edge is \cite[Lem. 3.2]{BrakensiekEurLarsonLi2023}, and the proof is similar.)
    \begin{proof}[Proof of Fact \ref{fact:PsiRestrict}]
        By definition, $\psi_i$ is the first Chern class of the line bundle $\mathcal{L}_i$ whose fiber at a $P$-marked curve $C$ is the cotangent space to $C$ at the marked point $i$. It is clear from this description that \begin{align}\label{eq:PsiRestrict3}
            \iota_\Gamma^*\mathcal{L}_i=\pr_{\alpha_i}^*\mathcal{L}_i,
        \end{align} where $\alpha_i$ is the vertex of $\Gamma$ containing half-edge $i$. Note that in \eqref{eq:PsiRestrict3}, the $\mathcal L_i$ on the left is a line bundle over $\Mbar_{g,P},$ whereas the $\mathcal L_i$ on the right is a line bundle over $\Mbar_{g_{\alpha_i},\HE(\Gamma,\alpha_i)}$. This immediately implies \eqref{eq:PsiRestrict1}.

        The argument for \eqref{eq:PsiRestrict2} is similar. For brevity, we write 
        \begin{align*}
            \alpha&:=\alpha_{P',i}&
            q&:=q_{P',i}&&\text{and}&
            Q&:=Q_{P',i}.
        \end{align*}
        We have a commutative diagram
        $$\begin{tikzcd}
            &\Mbar_{g_{\alpha},\HE(\Gamma,\alpha)}\arrow[dl,"\pi_Q",swap]\arrow[d,"\pi_{P',\alpha}"]&\Mtilde_{\Gamma}\arrow[r,"\iota_\Gamma"]\arrow[d,"\widetilde{\pi_{P'}}"]\arrow[l,"\pr_{\alpha}",swap]&\Mbar_{g,P}\arrow[d,"\pi_{P'}"]\\
            \Mbar_{g_{\alpha_i'},Q}\arrow[r,equal]&\Mbar_{g_{\alpha_i'},\HE(\Gamma',\alpha_i')}&\Mtilde_{\Gamma'}\arrow[r,"\iota_{\Gamma'}"]\arrow[l,"\pr_{\alpha_i'}",swap]&\Mbar_{g,P'}
        \end{tikzcd}$$
        Here $\widetilde{\pi_{P'}}$ is a product of forgetful maps, together with projection along the factors $\alpha\in V(\Gamma)$ corresponding to vertices of $\Gamma$ not in $\Gamma'$, and $\pi_{P',\alpha}$ renames each half-edge in $Q$ to the corresponding half-edge in $\Gamma'$ based at $\alpha_i'$, and forgets the half-edges not in $Q$.
        Thus 
        \begin{align*}
            \iota_\Gamma^*\pi_{P'}^*\psi_i&=\widetilde{\pi_{P'}}^*\iota_{\Gamma'}^*\psi_i\\
            &=\widetilde{\pi_{P'}}^*\pr^*_{\alpha_i'}\psi_i\quad\quad\quad\quad\text{by \eqref{eq:PsiRestrict3}}\\
            &=\pr_{\alpha}^*\pi_{P',\alpha}^*\psi_i\\
            &=\pr_{\alpha}^*\pi_{Q}^*\psi_q.\qedhere
        \end{align*}
    \end{proof}
    
\end{fact}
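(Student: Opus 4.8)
The plan is to establish \eqref{eq:PsiRestrict1} directly from the modular description of $\psi$-classes, and then deduce \eqref{eq:PsiRestrict2} from it together with the functoriality of forgetful morphisms; the ``often-used form'' obtained via the projection formula is then automatic.

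To prove \eqref{eq:PsiRestrict1}, I would recall that $\psi_i=c_1(\mathcal L_i)$, where $\mathcal L_i$ is the line bundle on $\Mbar_{g,P}$ with fiber $T^*_{p_i}C$ over a $P$-marked stable curve $(C,(p_j)_{j\in P})$. A point of $\Mtilde_\Gamma=\prod_{\alpha\in V(\Gamma)}\Mbar_{g_\alpha,\HE(\Gamma,\alpha)}$ is a tuple of pointed curves which $\iota_\Gamma$ glues along the edges of $\Gamma$ into a nodal curve $C$; the marking $i$ lies on the unique component $C_{\alpha_i}$ whose vertex carries the half-edge $i$, and $p_i$ is a smooth point of $C$ on that component, so $T^*_{p_i}C$ is canonically $T^*_{p_i}C_{\alpha_i}$. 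This identification is functorial in families, giving an isomorphism $\iota_\Gamma^*\mathcal L_i\cong\pr_{\alpha_i}^*\mathcal L_i$ of line bundles on $\Mtilde_\Gamma$; taking $c_1$ yields \eqref{eq:PsiRestrict1}.

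For \eqref{eq:PsiRestrict2} the crux is a commutative square relating $\pi_{P'}$ to the gluing maps of $\Gamma$ and its stabilization $\Gamma'$. Forgetting the marks in $P\setminus P'$ from a stable curve in $\Mbar_\Gamma$ and restabilizing contracts exactly the genus-$0$ components that become unstable, which is precisely the combinatorial operation taking $\Gamma$ to $\Gamma'$: the surviving components are indexed by $V(\Gamma')$ via $\st^v_{\Gamma'}$, and the half-edges at such a component that persist are exactly those in the image of $\st^{he}_{\Gamma',\epsilon}$. This yields a morphism $\widetilde{\pi_{P'}}\colon\Mtilde_\Gamma\to\Mtilde_{\Gamma'}$ — project onto the factors surviving in $\Gamma'$, then apply on each the forgetful map dropping the half-edges not coming from $\Gamma'$ (with renaming) — fitting into $\pi_{P'}\circ\iota_\Gamma=\iota_{\Gamma'}\circ\widetilde{\pi_{P'}}$. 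Pulling $\psi_i$ around this square and applying \eqref{eq:PsiRestrict1} to $\Gamma'$ gives $\iota_\Gamma^*\pi_{P'}^*\psi_i=\widetilde{\pi_{P'}}^*\pr_{\alpha_i'}^*\psi_i$; since $\pr_{\alpha_i'}\circ\widetilde{\pi_{P'}}=\pi_{Q_{P',i}}\circ\pr_{\alpha_{P',i}}$, with the renaming carrying $i$ to $q_{P',i}$, this is exactly \eqref{eq:PsiRestrict2}. The reformulation then follows from the projection formula.

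The main obstacle is making the second argument rigorous, i.e. verifying that the curve-level restabilization following $\pi_{P'}$ is faithfully modeled by the dual-graph stabilization $\Gamma\leadsto\Gamma'$. This requires checking (a) that contracting a genus-$0$ valence-$\le 2$ vertex of the dual graph corresponds to contracting an unstable genus-$0$ component (a rational tail or bridge) of the curve; (b) that these contractions leave untouched the component carrying the half-edge tracking $i$, so that $\alpha_{P',i}$ and $q_{P',i}$ are well-defined; and (c) that the identification is compatible with the universal curves, so the square commutes over $\Mbar_{g,P'}$ and not merely pointwise. These follow from the standard construction of the forgetful and stabilization morphisms (cf. \cite{Knudsen1983}); once the square is in place, the rest is formal.
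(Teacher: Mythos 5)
Your proposal is correct and follows essentially the same route as the paper: both prove \eqref{eq:PsiRestrict1} via the identification $\iota_\Gamma^*\mathcal L_i\cong\pr_{\alpha_i}^*\mathcal L_i$ of cotangent line bundles, and both deduce \eqref{eq:PsiRestrict2} from the commutative square $\pi_{P'}\circ\iota_\Gamma=\iota_{\Gamma'}\circ\widetilde{\pi_{P'}}$ together with the factorization $\pr_{\alpha_i'}\circ\widetilde{\pi_{P'}}=\pi_{Q_{P',i}}\circ\pr_{\alpha_{P',i}}$ (up to renaming of half-edges). The verification you flag as the ``main obstacle'' --- that curve-level restabilization is faithfully modeled by dual-graph stabilization --- is exactly the point the paper also delegates to the standard construction of forgetful morphisms, so there is no gap relative to the published argument.
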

\begin{fact}[Writing $\psi$-classes as sums of boundary divisors in genus zero, {\cite[Sec. 1.5.2]{KockNotes}}]\label{fact:WritePsiAsBoundary}
    Assume $g=0$, and let $i,j,k\in P$ be distinct. Then in $H^2(\Mbar_{0,P})$ we have the identity (with an implicit summation as above) $$\boxed{\psi_i\quad=\quad\begin{tikzpicture}[baseline=-0.65ex]
            \draw (0,0) -- (1,0);
            \draw (0,0) node {$\bullet$};
            \draw (1,0) node {$\bullet$};
            \draw (0,0)--++(135:.6);
            \draw (135:.6) node[left] {$i$};
            \foreach \th in {-45,45} {
            \draw (1,0)--++(\th:.6);
            };
            \draw (1,0)++(45:.6) node[right] {$j$};
            \draw (1,0)++(-45:.6) node[right] {$k$};
            \filldraw[fill=white,thick] (0,0) circle(.25);
            \filldraw[fill=white,thick] (1,0) circle(.25);
            \draw (0,0) node {$0$};
            \draw (1,0) node {$0$};
        \end{tikzpicture}}$$
\end{fact}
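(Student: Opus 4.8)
The plan is to deduce this directly from the general comparison formula in Fact~\ref{fact:PsiPullback}, applied to the forgetful map that remembers only $i,j,k$. Concretely, set $P'=\{i,j,k\}\subseteq P$. Since $g=0$ we have $2g-2+\abs{P'}=1>0$, so the forgetful morphism $\pi_{P'}\colon\Mbar_{0,P}\to\Mbar_{0,\{i,j,k\}}$ is defined; moreover $\Mbar_{0,\{i,j,k\}}$ is a single point, hence $H^2(\Mbar_{0,\{i,j,k\}})=0$ and in particular the cotangent class $\psi_i$ vanishes there. Pulling back, $\pi_{P'}^*\psi_i=0$ in $H^2(\Mbar_{0,P})$.

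Now I would apply the general form of Fact~\ref{fact:PsiPullback} with this $P'$, which gives
\[
\psi_i \;=\; \pi_{P'}^*\psi_i \;+\; \sum_I D_I \;=\; \sum_I D_I,
\]
where $I$ ranges over subsets of $P$ with $i\in I$, $I\cap\{i,j,k\}=\{i\}$ (that is, $j,k\notin I$), and $\abs{I}\ge 2$. This is exactly the content of the boxed identity: the picture fixes $i$ on one genus-$0$ vertex and $j,k$ on the other, and the implicit summation runs over all stable ways of distributing the remaining marked points of $P\setminus\{i,j,k\}$ between the two vertices --- which is the same as choosing the nonempty subset $I\setminus\{i\}\subseteq P\setminus\{i,j,k\}$ of points that land on the $i$-side. (Stability of such a two-vertex graph is automatic: the $j,k$-side has valence $\ge 3$ because $j,k$ are there together with the node, and the $i$-side has valence $\ge 3$ precisely because $\abs{I}\ge 2$ forces at least one further marked point onto it.)

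I do not expect a genuine obstacle: the argument is a one-line corollary of Fact~\ref{fact:PsiPullback} once one notes that $\Mbar_{0,3}$ is a point. The only thing requiring (minor) care is the bookkeeping --- matching the index set $\{I : i\in I,\ j,k\notin I,\ \abs{I}\ge 2\}$ produced by Fact~\ref{fact:PsiPullback} with the pictorial convention's implicit sum, and confirming that each such $I$ really does determine a nonzero boundary divisor $D_I\subseteq\Mbar_{0,P}$.
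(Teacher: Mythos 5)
Your derivation is correct and complete: the paper offers no proof of this fact (it only cites Kock's notes), and your argument --- apply the general comparison formula of Fact~\ref{fact:PsiPullback} with $P'=\{i,j,k\}$ and observe that $\pi_{P'}^*\psi_i=0$ because $\Mbar_{0,3}$ is a point --- is exactly the standard derivation given in the cited reference. Your bookkeeping of the index set $\{I: i\in I,\ j,k\notin I,\ \abs{I}\ge2\}$ and the stability check for the two-vertex graphs also match the paper's pictorial convention.
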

\begin{fact}[Product of boundary divisors {\cite[Lem. 25.2.2]{HoriKatzKlemmPandharipandeThomasVafaVakilZaslow2003}}]\label{fact:ProductOfBoundary}
    Let $D_{I_1},D_{I_2}$ be boundary divisors on $\Mbar_{g,P}$ of the form in Notation~\ref{not:BasicMgnNotation}\eqref{not:BoundaryDivisor}. Then the class $D_{I_1}\cdot D_{I_2}$ is given by the following, where a half-edge decorated with $-\psi$ means the pushforward of the corresponding $\psi$-class along the inclusion of that stratum: $$\boxed{D_{I_1}\cdot D_{I_2}=\begin{cases}
        \begin{tikzpicture}[baseline=-0.65ex]
            \draw (0,0) -- (1,0);
            \draw[very thick] (0,0)--(.5,0);
            \draw (0,0) node {$\bullet$};
            \draw (1,0) node {$\bullet$};
            \foreach \th in {-60,-40,-20,0,20,40,60} {
            \draw (1,0)--++(\th:.6);
            \draw (0,0)--++(180+\th:.6);
            };
            \draw (1.6,0) node[right] {$P\setminus I_1$};
            \draw (-.6,0) node[left] {$I_1$};
            \draw (.35,0.15) node {\tiny $-\psi$};
            \filldraw[fill=white,thick] (0,0) circle(.15);
            \filldraw[fill=white,thick] (1,0) circle(.15);
            \draw (0,0) node {\tiny$0$};
            \draw (1,0) node {\tiny$g$};
        \end{tikzpicture}+\begin{tikzpicture}[baseline=-0.65ex]
            \draw (0,0) -- (1,0);
            \draw (0,0) node {$\bullet$};
            \draw (1,0) node {$\bullet$};
            \foreach \th in {-60,-40,-20,0,20,40,60} {
            \draw (1,0)--++(\th:.6);
            \draw (0,0)--++(180+\th:.6);
            };
            \draw (1.6,0) node[right] {$P\setminus I_1$};
            \draw (-.6,0) node[left] {$I_1$};
            \draw (.65,0.15) node {\tiny $-\psi$};
            \draw[very thick] (1,0)--(.5,0);
            \filldraw[fill=white,thick] (0,0) circle(.15);
            \filldraw[fill=white,thick] (1,0) circle(.15);
            \draw (0,0) node {\tiny$0$};
            \draw (1,0) node {\tiny$g$};
        \end{tikzpicture}&I_1=I_2\text{ \quad (up to $I_k\leftrightarrow P\setminus I_k$ if $g=0$)}\\
        &\\
        \begin{tikzpicture}[baseline=-0.65ex]
            \draw (0,0) -- (2,0);
            \draw (0,0) node {$\bullet$};
            \draw (1,0) node {$\bullet$};
            \foreach \th in {-60,-40,-20,0,20,40,60} {
            \draw (2,0)--++(\th:.6);
            \draw (0,0)--++(180-\th:.6);
            \draw (1,0)--++(-90+\th:.6);
            };
            \draw (2.6,0) node[right] {$P\setminus I_2$};
            \draw (1,-.6) node[below] {$I_2\setminus I_1$};
            \draw (-.6,0) node[left] {$I_1$};
            \filldraw[fill=white,thick] (0,0) circle(.25);
            \filldraw[fill=white,thick] (1,0) circle(.25);
            \filldraw[fill=white,thick] (2,0) circle(.25);
            \draw (0,0) node {$0$};
            \draw (1,0) node {$0$};
            \draw (2,0) node {$g$};
        \end{tikzpicture}&I_1\subsetneq I_2\text{ \quad (up to $I_k\leftrightarrow P\setminus I_k$ if $g=0$)}\\
        &\\
        0&\text{otherwise}.
    \end{cases}}$$
\end{fact}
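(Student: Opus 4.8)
This formula appears as \cite[Lem.~25.2.2]{HoriKatzKlemmPandharipandeThomasVafaVakilZaslow2003}; here is how I would prove it. The plan exploits two structural facts: $\Mbar_{g,P}$ is a smooth Deligne--Mumford stack, and each boundary divisor $D_I$ is the image of the gluing morphism $\iota_I$ of Notation~\ref{not:BasicMgnNotation}. With these in hand I would split into three cases according to the combinatorics of $I_1$ and $I_2$. Since every stable graph occurring below has trivial automorphism group, I can treat the relevant $\iota$'s as honest closed immersions and disregard the stacky subtleties of Remark~\ref{rem:Stacks}. (In genus $0$ one could instead deduce everything from Keel's presentation \cite{Keel1992}; the deformation-theoretic route below is uniform in $g$.)

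First I would handle the vanishing case. If $I_1$ and $I_2$ ``cross'' --- meaning that none of $I_1\subseteq I_2$, $I_2\subseteq I_1$, $I_1\cup I_2=P$, $I_1\cap I_2=\emptyset$ holds (the last two only relevant when $g=0$) --- then I claim $D_{I_1}\cap D_{I_2}=\emptyset$ as sets, whence $D_{I_1}\cdot D_{I_2}=0$. This is a combinatorial check: a stable curve lying in both divisors would have one edge partitioning the markings as $I_1\mid P\setminus I_1$ with a genus-$0$ component on the $I_1$-side, and a second such edge for $I_2$, and no stable genus-$g$ dual graph supports both edges when the partitions cross.

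Next, the transverse case: suppose $I_1\subsetneq I_2$ (in genus $0$ the disjoint and cofinal subcases reduce to this after replacing some $I_k$ by its complement). Let $\Gamma$ be the three-vertex chain pictured in the statement, so that $D_{I_1}\cap D_{I_2}=\Mbar_\Gamma$ set-theoretically. I would then verify that this intersection is dimensionally transverse and generically reduced: at a general point of $\Mbar_\Gamma$ the stack $\Mbar_{g,P}$ admits \'etale-local coordinates that include the two independent smoothing parameters $t_1,t_2$ of the two nodes of $\Gamma$, with $D_{I_1}=\{t_1=0\}$ and $D_{I_2}=\{t_2=0\}$; it follows that $D_{I_1}\cdot D_{I_2}=[\Mbar_\Gamma]$. (In positive genus, the same argument applied to a three-vertex ``cherry'' handles $I_1\cap I_2=\emptyset$.)

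Finally, the self-intersection case $I_1=I_2=:I$ (or $I_1=P\setminus I_2$ when $g=0$). Here $\iota_I$ is a closed immersion onto a smooth divisor, so the self-intersection formula gives $D_I^2=(\iota_I)_*(c_1(N_{\iota_I}))$, and the crux is to identify the normal bundle. By the deformation theory of a node, $N_{\iota_I}\cong\mathcal T_\bullet\otimes\mathcal T_\star$, the tensor product of the tangent lines to the two branches at the glued node, so $c_1(N_{\iota_I})=-\psi_\bullet-\psi_\star$; pushing the two summands forward gives exactly the two pictures in the first line of the formula. The main obstacle lives entirely in these last two cases: the transversality-and-reducedness statement of the third paragraph and the normal-bundle identity $c_1(N_{\iota_I})=-\psi_\bullet-\psi_\star$ are the only substantive inputs, and both are standard consequences of the deformation theory of stable curves.
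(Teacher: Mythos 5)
The paper does not actually prove Fact \ref{fact:ProductOfBoundary}: it is quoted from \cite[Lem.~25.2.2]{HoriKatzKlemmPandharipandeThomasVafaVakilZaslow2003}, so there is no internal argument to compare yours against. Your sketch is the standard proof of that cited lemma, and it is correct in all three cases: set-theoretic disjointness of $D_{I_1}$ and $D_{I_2}$ for genuinely crossing partitions; transversality of the intersection along the three-vertex chain, seen from the two independent node-smoothing coordinates in an \'etale-local chart; and the self-intersection formula $D_I^2=(\iota_I)_*c_1(N_{\iota_I})$ combined with the deformation-theoretic identification $N_{\iota_I}\cong\mathcal{T}_\bullet\otimes\mathcal{T}_\star$, which is exactly what produces the two $-\psi$ decorations in the first line of the displayed formula. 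Your appeal to trivial automorphism groups is also justified, since the two or three vertices of each relevant stable graph carry distinct nonempty marking sets. One point worth flagging: for $g>0$ and $I_1\cap I_2=\emptyset$ the two divisors are \emph{not} disjoint --- their product is the class of the ``cherry'' stratum, exactly as you observe --- so the literal reading of the Fact's ``otherwise $0$'' is too coarse in positive genus; this is harmless for the paper, because every application in Section \ref{sec:InductiveStep} has $\{u,w\}\subseteq I_1\cap I_2$, but your parenthetical ``(the last two only relevant when $g=0$)'' in the vanishing case sits awkwardly next to your later cherry remark. I would state the vanishing criterion uniformly: $D_{I_1}\cap D_{I_2}=\emptyset$ unless the two one-edge dual graphs admit a common degeneration, i.e.\ unless there is a single stable $P$-marked genus-$g$ graph carrying both separating edges.
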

In \cite{BrakensiekEurLarsonLi2023}, genus-zero intersection numbers of the form 
\begin{align}\label{eq:KapranovDegree}
    \int_{\Mbar_{0,n}}\prod_{k=1}^{n-3}\pi_{T_k}^*\psi_{i_k},
\end{align}
where for all $k=1,\ldots,n-3,$ we have $T_k\subseteq[n]$ with $\abs{T_k}\ge3$ and $i_k\in T_k$, are referred to as \emph{Kapranov degrees}. Note that the numbers $\omega_{G,0,3}$ (and in fact $\omega_{G,0,m}$) are a special case. We have the following criterion for Kapranov degrees to be nonzero (the ``Cerberus condition'' of \cite[Sec. 4]{BrakensiekEurLarsonLi2023}, see also the ``surplus-3'' condition in \cite[Sec. 4]{Silversmith2022}).
\begin{fact}[{\cite[Thm. A(b)]{BrakensiekEurLarsonLi2023}}]\label{fact:Surplus}
    The intersection number $\int_{\Mbar_{0,n}}\prod_{k=1}^{n-3}\pi_{T_k}^*\psi_{i_k}$ is nonzero if and only if, for every nonempty $K\subseteq\{1,\ldots,n-3\}$, we have $\abs{\bigcup_{k\in K}T_k}\ge\abs{K}+3$.
\end{fact}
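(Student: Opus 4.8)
The plan is to prove the two implications separately; the ``only if'' direction is a short dimension count, while the ``if'' direction is the substance. For necessity I argue by contraposition: suppose some nonempty $K\subseteq\{1,\ldots,n-3\}$ satisfies $\abs{T}<\abs{K}+3$, where $T:=\bigcup_{k\in K}S_k$. For each $k\in K$ the forgetful map $\pi_{S_k}\colon\Mbar_{0,P}\to\Mbar_{0,S_k}$ factors through $\pi_T\colon\Mbar_{0,P}\to\Mbar_{0,T}$ (since $S_k\subseteq T$), so $\pi_{S_k}^*\psi_{i_k}$ is pulled back from $\Mbar_{0,T}$. Hence $\prod_{k\in K}\pi_{S_k}^*\psi_{i_k}=\pi_T^*(\gamma)$ for some class $\gamma$ of codimension $\abs{K}$ on $\Mbar_{0,T}$; but $\dim\Mbar_{0,T}=\abs{T}-3<\abs{K}$, so $\gamma=0$. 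Therefore $\prod_{k=1}^{n-3}\pi_{S_k}^*\psi_{i_k}$ already vanishes in cohomology, and the intersection number is $0$.

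For sufficiency, first recall that Kapranov's construction realizes $\Mbar_{0,S_k}$ as an iterated blow-up of $\P^{\abs{S_k}-3}$, the contraction morphism $\kappa_k\colon\Mbar_{0,S_k}\to\P^{\abs{S_k}-3}$ satisfying $\kappa_k^*\mathcal O(1)=\psi_{i_k}$; in particular $\psi_{i_k}$ is globally generated, hence so is $\pi_{S_k}^*\psi_{i_k}$ on $\Mbar_{0,P}$. Working over $\C$, Kleiman transversality gives $$\int_{\Mbar_{0,P}}\prod_k\pi_{S_k}^*\psi_{i_k}=\#\Bigl(\bigcap_k Z_k\Bigr)\ge0$$ for general representatives $Z_k$, so it remains to show that, when the Hall-type inequalities hold, this intersection is nonempty. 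Packaging the data as $\Phi:=\prod_k(\kappa_k\circ\pi_{S_k})\colon\Mbar_{0,P}\to\prod_k\P^{\abs{S_k}-3}$, so that $\prod_k\pi_{S_k}^*\psi_{i_k}$ is the pullback under $\Phi$ of the class of a codimension-$(n-3)$ linear subspace of the target, one checks that the intersection number is $0$ if $\dim\Phi(\Mbar_{0,P})<n-3$ and positive if $\dim\Phi(\Mbar_{0,P})=n-3$. Since each $\kappa_k$ is birational, $\Phi$ and the product of forgetful maps $\Mbar_{0,P}\to\prod_k\Mbar_{0,S_k}$ have the same image dimension, so sufficiency reduces to a rigidity statement: if the Hall inequalities hold, a general configuration of $P$-marked points on $\P^1$ is determined up to finite ambiguity by its restrictions to the subsets $S_k$.

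I would prove this rigidity statement by induction on $n$. Applied to the inequalities $\abs{\bigcup_{k\in K}S_k}\ge\abs{K}+3$, Hall's marriage theorem produces a system of distinct representatives $k\mapsto p_k\in S_k$ together with three further points of $P$ used by no representative; after normalizing those three points to $0,1,\infty$, one shows that a suitable marked point $p$ can be solved for from the restrictions to the $S_k$ containing it (together with the points already pinned down), reducing the problem to $\Mbar_{0,P\setminus\{p\}}$ with a modified index set of size $(n-1)-3$. An equivalent organization of the same induction uses the Kapranov-degree recursion of \cite[Thm.~B]{BrakensiekEurLarsonLi2023}: if some proper nonempty $K$ is \emph{tight}, i.e.\ $\abs{\bigcup_{k\in K}S_k}=\abs{K}+3$, then the intersection number factors as a product of two strictly smaller Kapranov degrees, each inheriting the Hall condition; if no proper subset is tight, one forgets a carefully chosen marked point to reduce $n$, again tracking the inequalities.

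The main obstacle — and the technical heart of \cite{BrakensiekEurLarsonLi2023} — is the purely combinatorial lemma underlying this step: that one can always choose the point $p$ (and adjust the sets $S_k$) so that the Hall-with-surplus-$3$ condition is preserved. This rests on submodularity of the function $K\mapsto\abs{\bigcup_{k\in K}S_k}$ and on understanding which index sets are tight (tight sets that share an index are closed under union and intersection). One should also bear in mind that expanding $\prod_k\pi_{S_k}^*\psi_{i_k}$ into boundary strata via Facts~\ref{fact:WritePsiAsBoundary} and~\ref{fact:ProductOfBoundary} can involve cancellation, so the induction must be arranged to exhibit a genuinely nonempty transversal intersection rather than a merely nonzero-looking sum of strata.
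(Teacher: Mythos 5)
Your ``only if'' argument coincides with the paper's own proof: the paper proves \emph{only} that direction (explicitly noting it is the easy one and the only one needed elsewhere in the paper), by the same dimension count --- the sub-product $\prod_{k\in K}\pi_{S_k}^*\psi_{i_k}$ is pulled back from $\Mbar_{0,\bigcup_{k\in K}S_k}$, a space of dimension $\abs{\bigcup_{k\in K}S_k}-3<\abs{K}$, hence vanishes. That part of your proposal is correct and matches the paper exactly.

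For the ``if'' direction the paper offers no proof at all; it cites \cite[Thm.~A(b)]{BrakensiekEurLarsonLi2023}. Your outline of that direction is a fair description of the strategy of the cited reference, but as written it contains two genuine gaps. First, the assertion that the intersection number is positive as soon as $\dim\Phi(\Mbar_{0,P})=n-3$ is false for a general morphism to a product of projective spaces: the number in question is a multidegree of the image, and a $d$-dimensional subvariety of a product of projective spaces can have vanishing $(1,\dots,1)$-multidegree (e.g.\ $\P^2\times\{pt\}\subset\P^2\times\P^1$ with $d=2$). Positivity of such a multidegree is equivalent to a dimension condition on the images of \emph{all} sub-collections of the maps $\kappa_k\circ\pi_{S_k}$, not just the full collection, so this step already requires the full Hall-with-surplus hypothesis and a nontrivial positivity criterion for multidegrees --- it cannot be dispatched with ``one checks.'' Second, the inductive ``rigidity'' step --- that one can always forget a marked point, or split along a tight index set, while preserving the surplus-$3$ condition on the modified collection $\{S_k\}$ --- is precisely the combinatorial heart of the argument, and you explicitly leave it unproved (``the main obstacle''). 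So the proposal establishes exactly what the paper establishes (the necessity direction) and leaves sufficiency at the level of a plausible but incomplete sketch; since the paper itself only needs and only proves necessity, this does not affect any downstream argument, but the ``if and only if'' as stated is not proved by the proposal.
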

We only need the ``only if'' direction, which is the easy one; we give its proof here.
\begin{proof}
    Suppose there exists $K$ for which $\abs{\bigcup_{k\in K}T_k}<\abs{K}+3$. The product $\prod_{k\in K}\pi_{T_k}^*\psi_{i_k}$ is the pullback of a codimension-$\abs{K}$ class on the space $\Mbar_{0,\abs{\bigcup_{k\in K}T_k}}$. Thus $\prod_{k\in K}\pi_{T_k}^*\psi_{i_k}=0$ for dimension reasons. Since the integrand $\prod_{k=1}^{n-3}\pi_{T_k}^*\psi_{i_k}$ is a multiple of $\prod_{k\in K}\pi_{T_k}^*\psi_{i_k}$, the integral vanishes.
\end{proof}
Finally, we need the following basic calculation, which is precisely the base case of Theorem \ref{thm:main} when $(g,m)=(0,3)$ (i.e. the case of graphs with no edges):
\begin{fact}\label{fact:BasicCrossRatioDegree}
    $$\int_{\Mbar_{0,n}}\pi_{1234}^*([pt])\cdot\pi_{1235}^*([pt])\cdot\cdots\cdot\pi_{123n}^*([pt])=1.$$
\end{fact}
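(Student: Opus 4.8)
The plan is to compute the integral directly, as a transverse count of points, using the cross-ratio description of $\Mbar_{0,n+3}$. Identify $\Mbar_{0,4}$ with $\P^1$ so that the three ``fixed'' marks $1,2,3$ sit at $0,1,\infty$. Then for each of the remaining marks $k$, the forgetful map $\pi_{123k}\colon\Mbar_{0,n+3}\to\Mbar_{0,4}\cong\P^1$ is, on the open locus $\M_{0,n+3}$, the cross-ratio coordinate recording the position of $k$; and the joint forgetful map $\Phi=(\pi_{123k})_k\colon\Mbar_{0,n+3}\to(\Mbar_{0,4})^n\cong(\P^1)^n$ restricts to an isomorphism from $\M_{0,n+3}$ onto the open subset of $(\P^1)^n$ consisting of tuples with all entries in $\P^1\setminus\{0,1,\infty\}$ and pairwise distinct. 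Since forgetful maps are flat (Notation~\ref{not:BasicMgnNotation}) and $[pt]\in H^2(\Mbar_{0,4})$ is the class of any chosen point, I would represent each factor as $\pi_{123k}^*([pt])=[\pi_{123k}^{-1}(t_k)]$ for a generic $t_k\in\M_{0,4}$; generic smoothness in characteristic zero makes this fibre a reduced divisor. Then $\int_{\Mbar_{0,n+3}}\prod_k\pi_{123k}^*([pt])$ is the number of points, counted with intersection multiplicity, of $\Phi^{-1}(t_4,\dots,t_{n+3})=\bigcap_k\pi_{123k}^{-1}(t_k)$.

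I would then evaluate that count for generic pairwise-distinct $t_k\in\P^1\setminus\{0,1,\infty\}$. By the isomorphism above, $\Phi^{-1}(t_4,\dots,t_{n+3})$ meets $\M_{0,n+3}$ in exactly one point, the smooth curve whose cross-ratio coordinates are the $t_k$ (a legitimate point of $\M_{0,n+3}$ precisely because the $t_k$ are distinct and avoid $0,1,\infty$); and the intersection is transverse there, since on a neighbourhood of that point $\Mbar_{0,n+3}\cong(\P^1)^n$ via $\Phi$, under which the divisors $\pi_{123k}^{-1}(t_k)$ become the $n$ coordinate hyperplanes. It remains to rule out points of the intersection on the boundary $\partial\Mbar_{0,n+3}$; but $\partial\Mbar_{0,n+3}$ is closed of dimension $n-1$, so $\Phi(\partial\Mbar_{0,n+3})$ is a closed subset of $(\P^1)^n$ of dimension at most $n-1$, which a generic $(t_4,\dots,t_{n+3})$ avoids. (If one prefers to avoid the dimension count, a short case analysis on dual graphs shows that a nodal curve in $\Phi^{-1}(t_4,\dots,t_{n+3})$ would force either two of the $t_k$ to coincide or some $t_k$ to equal $0$, $1$, or $\infty$, again contradicting genericity.) Hence the integral equals $1$.

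The computation is elementary throughout; the only steps demanding care are the identification of each $\pi_{123k}$ with a coordinate function on an open subset of $(\P^1)^n$ and the boundary-avoidance argument --- that is, confirming that $\Phi$ behaves as claimed and that the unique intersection point is reduced and transverse. A cleaner alternative is induction on $n$: let $\pi\colon\Mbar_{0,n+3}\to\Mbar_{0,n+2}$ forget one mark $p\notin\{1,2,3\}$. All factors except $\pi_{123p}^*([pt])$ are pulled back along $\pi$, so the projection formula reduces the claim to $\pi_*\bigl(\pi_{123p}^*([pt])\bigr)=[\Mbar_{0,n+2}]$. This holds because $\pi$ is the universal curve over $\Mbar_{0,n+2}$, and $\pi_{123p}$ restricts on each fibre of $\pi$ to the degree-one stabilization of that fibre onto $\Mbar_{0,4}\cong\P^1$; hence a generic fibre of $\pi_{123p}$ maps birationally onto $\Mbar_{0,n+2}$. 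The base case $n=1$ is just $\int_{\Mbar_{0,4}}[pt]=1$. Either route establishes Fact~\ref{fact:BasicCrossRatioDegree}.
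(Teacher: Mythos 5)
Your proposal is correct. The paper itself does not spell out an argument --- it simply records that the identity is well known and ``follows from \cite[Lem.~3.2]{Silversmith2022} by induction'' --- so the relevant comparison is with that implicit route. Your second argument (forget one mark $p\notin\{1,2,3\}$, observe that every other factor is pulled back along $\pi\colon\Mbar_{0,n+3}\to\Mbar_{0,n+2}$, apply the projection formula, and use that $\pi_*\pi_{123p}^*([pt])=[\Mbar_{0,n+2}]$ because $\pi_{123p}$ restricts to a degree-one map on each fibre of the universal curve) is exactly the induction the paper is gesturing at, now made self-contained. Your first argument is genuinely different and also sound: it buys an explicit geometric picture --- the product of cross-ratio maps $\Phi$ identifies $\M_{0,n+3}$ with the configuration space of $n$ distinct points of $\P^1\setminus\{0,1,\infty\}$, so the generic fibres meet in a single transverse interior point --- at the cost of having to rule out boundary contributions, which your dimension count $\dim\Phi(\partial\Mbar_{0,n+3})\le n-1<n$ handles cleanly (and which is the only step where care is really needed, since a positive-dimensional or non-reduced boundary component would invalidate the point count). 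One small remark: as printed, the product in the Fact runs only up to $\pi_{123n}$, which would give $n-3$ divisor classes on the $n$-dimensional space $\Mbar_{0,n+3}$; your reading --- one factor $\pi_{123k}^*([pt])$ for each of the $n$ marks $k\notin\{1,2,3\}$ --- is the intended one, as the application in the base case of Theorem \ref{thm:main} confirms.
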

\begin{proof}
    This is well-known and elementary, e.g. it follows from \cite[Lem. 3.2]{Silversmith2022} by induction.
\end{proof}

\section{First Proof of the main theorem: Intersection Theory of \texorpdfstring{$\Mbar_{g,n}$}{{M\_{g,n}-bar}}}\label{sec:Proof}
\begin{proof}[Proof of Theorem \ref{thm:main}]
    We proceed by induction on the number of edges in $G$. 
    
    \subsection{Base case}\label{sec:BaseCase} Let $G$ be a graph with no edges. Recall from Section \ref{sec:ProofOutline} that we need to show 
    \begin{align*}
        \omega_{G,g,m}=\begin{cases}(2g-2+m)^{\abs{V}}&(g,m,\abs{V})\ne(1,0,1)\\1&(g,m,\abs{V})=(1,0,1).\end{cases}
    \end{align*}
    First suppose $(g,m)\ne(1,0).$ Let $\Gamma$ be any $M$-marked stable genus-$g$ weighted graph\footnote{As mentioned in Remark \ref{rem:Stacks}, there are unimportant stack-theoretic subtleties in this argument, which are absent if $\Gamma$ has no automorphisms --- choosing such a $\Gamma$ is often (but not always) possible.} such that for all $\alpha\in V(\Gamma),$ $g_\alpha=0$ and $\val(\alpha)=3$, corresponding to a zero-dimensional boundary stratum in $\Mbar_{g,M}.$ Then using Notation \ref{not:BasicMgnNotation}\eqref{not:DecomposeBoundaryStratum}, we have that $\Mtilde_{\Gamma}\cong\prod_{\alpha\in V(\Gamma)}\Mbar_{0,3}$ is a single point, and $\Mbar_{\Gamma}\subseteq\Mbar_{g,M}$ is an orbifold point with isotropy group $\Aut(\Gamma)$. By Fact \ref{fact:BoundaryPullback} (and in particular \eqref{eq:PullbackOfBoundaryStrata1}), 
    \begin{align*}
        \pi_M^*([pt])=\pi_M^*(\iota_{\Gamma})_*([\Mtilde_{\Gamma}])=\sum_{\Lambda}(\iota_{\Lambda})_*([\Mtilde_{\Lambda}]),
    \end{align*} where $\Lambda$ runs over ways of attaching the marks $v\in V$ to vertices of $\Gamma$. We know that $\Gamma$ has Betti number $g,$ hence has $2g-2+m$ vertices. Each mark $v\in V$ can be attached to any of these vertices, so there are exactly $(2g-2+m)^{\abs{V}}$ graphs $\Lambda$. It thus remains to show that for each such $\Lambda$, we have \begin{align}\label{eq:BaseCaseContribution}
        \int_{\Mbar_{g,V\sqcup M}}\Psi_{G,g,m}\cdot(\iota_{\Lambda})_*([\Mtilde_{\Lambda}])=1.
    \end{align}
    Fix one such $\Lambda$. For each vertex $\alpha\in V(\Lambda)$, the set $\HE(\Lambda,\alpha)$ of half-edges based at $\alpha$ is of the form $\HE(\Lambda,\alpha)=A\sqcup B,$ where $A$ consists of half-edges in $\Gamma$, and $B$ consists of elements of $V$ that have been attached at $\alpha$. By choice of $\Gamma$, $\abs{A}=3.$ By the projection formula and Fact \ref{fact:PsiRestrict}, the left side of \eqref{eq:BaseCaseContribution} is equal to \begin{align}\label{eq:ProductOfIntegralsBaseCase}
        \prod_{\alpha\in V(\Lambda)}\left(\int_{\Mbar_{0,\HE(\Lambda,\alpha)}}\prod_{v\in B}\pi_{\{v\}\cup A}^*\psi_{v}\right).
    \end{align}
    Each integral in \eqref{eq:ProductOfIntegralsBaseCase} is precisely of the form in Fact \ref{fact:BasicCrossRatioDegree} --- the forgetful map in each factor remembers the three elements of $A\subseteq\HE(\Lambda,\alpha)$ and one other element of  $\HE(\Lambda,\alpha)$. Thus each factor of \eqref{eq:ProductOfIntegralsBaseCase} is equal to 1, so \eqref{eq:BaseCaseContribution} holds for all $\Lambda$. Thus $$\omega_{G,g,m}=(2g-2+m)^{\abs{V}}$$ as desired.

    Now suppose $(g,m)=(1,0).$ (We are still assuming $G$ has no edges.) We must show that $\omega_{G,g,m}=1$ if $G$ has a single vertex, and $\omega_{G,g,m}=0$ otherwise. If $G$ has a single vertex, then we have \begin{align}\label{eq:Psi11}
        \omega_{G,1,0}=24\cdot\int_{\Mbar_{1,1}}\psi_1=1
    \end{align} by a very well-known computation (see e.g. \cite[Cor. 1.7.2]{KockNotes}). If $G$ has more than one vertex, then by the projection formula it is sufficient to prove the identity $\int_{\Mbar_{1,2}}(\pi_1^*\psi_1)\cdot(\pi_2^*\psi_2)=0.$ This is again a standard computation. Briefly, Facts \ref{fact:PsiPullback} and \ref{fact:PsiRestrict} imply that $$(\pi_1^*\psi_1)\cdot(\pi_2^*\psi_2)=\psi_1\psi_2+D_{12}^2.$$ The first term is equal to $1/24$ by e.g. the dilaton equation \cite[Cor. 2.3.2]{KockNotes}, and the second is equal to $-1/24$ by Fact \ref{fact:ProductOfBoundary} and \eqref{eq:Psi11}. This concludes the base case.

\subsection{Inductive step}\label{sec:InductiveStep} Let $G=(V,E)$ be a graph with $\abs{E}>0,$ and let $e=\{u,w\}\in E.$ Let $V'=V(G/e)=(V\setminus\{u,w\})\sqcup\{\star\}$, where $\star\in V'$ denotes the vertex to which $e$ was contracted.
    
    For brevity, we introduce some additional notation. Let $$S:=V\sqcup M\quad\quad\text{and}\quad\quad S'=V'\sqcup M;$$ thus $\Psi_{G,g,m}$ and $\Psi_{G\setminus e,g,m}$ are cohomology classes on $\Mbar_{g,S}$, while $\Psi_{G/e,g,m}$ is a cohomology class on $\Mbar_{g,S'}$. We will repeatedly use the canonical identification $$\Mbar_{g,S'}\cong D_{uw}\xhookrightarrow{\iota_{uw}}{}\Mbar_{g,S}.$$ For all $v\in V$, let $$S_v:=N[v]\sqcup M.$$ For all $v\in V\setminus\{u,w\}$, let $$S_v'=\begin{cases}
     S_v&S_v\cap\{u,w\}=\emptyset\\
     S_v\setminus\{u,w\}\sqcup\{\star\}&S_v\cap\{u,w\}\ne\emptyset.
    \end{cases}.$$
    Finally, let $$S_\star'=((S_u\cup S_w)\setminus\{u,w\})\cup\{\star\}.$$ In this notation we have $$\Psi_{G,g,m}=\prod_{v\in V}\pi_{S_v}^*(\psi_v)\quad\quad\text{and}\quad\quad \Psi_{G/e,g,m}=\prod_{v\in V'}\pi_{S_v'}^*(\psi_v).$$
    
    In order to prove \eqref{eq:ClassDeletionContraction}, we now manipulate the expression $(\Psi_{G,g,m}-\Psi_{G\setminus e,g,m})\cdot\pi_M^*([pt])$. The products $\Psi_{G,g,m}$ and $\Psi_{G\setminus e,g,m}$ share most of the same factors, differing only in the two factors corresponding to $u$ and $w$:
\begin{align}\label{eq:FirstStep}
    \Psi_{G,g,m}-\Psi_{G\setminus e,g,m}&=\left(\prod_{v\in V\setminus\{u,w\}}\pi_{S_v}^*\psi_v\right)\cdot\left((\pi_{S_u}^*\psi_u)\cdot(\pi_{S_w}^*\psi_w)-(\pi_{S_u\setminus\{w\}}^*\psi_u)\cdot(\pi_{S_w\setminus \{u\}}^*\psi_w)\right).
\end{align}
By Fact \ref{fact:PsiPullback}, we have $$\pi_{S_u}^*\psi_u=\pi_{S_u}^*(\pi_{S_u\setminus\{w\}}^*\psi_u+D_{uw})=\pi_{S_u\setminus\{w\}}^*\psi_u+\pi_{S_u}^*D_{uw},$$ and similarly $$\pi_{S_w}^*\psi_w=\pi_{S_w}^*(\pi_{S_w\setminus\{u\}}^*\psi_w+D_{uw})=\pi_{S_w\setminus\{u\}}^*\psi_w+\pi_{S_w}^*D_{uw}.$$ Thus \eqref{eq:FirstStep} becomes:
\begin{align}
    &\Psi_{G,g,m}-\Psi_{G\setminus e,g,m}\\&=\left(\prod_{v\in V\setminus\{u,w\}}\pi_{S_v}^*\psi_v\right)\cdot\left((\pi_{S_u\setminus\{w\}}^*\psi_u+\pi_{S_u}^*D_{uw})\cdot(\pi_{S_w\setminus\{u\}}^*\psi_w+\pi_{S_w}^*D_{uw})-(\pi_{S_u\setminus\{w\}}^*\psi_u)\cdot(\pi_{S_w\setminus \{u\}}^*\psi_w)\right)\nonumber\\
    &=\left(\prod_{v\in V\setminus\{u,w\}}\pi_{S_v}^*\psi_v\right)\cdot\left((\pi_{S_u\setminus\{w\}}^*\psi_u)(\pi_{S_w}^*D_{uw})+(\pi_{S_w\setminus\{u\}}^*\psi_w)(\pi_{S_u}^*D_{uw})+(\pi_{S_u}^*D_{uw})(\pi_{S_w}^*D_{uw})\right)\nonumber
\end{align}
We therefore wish to show that $(\iota_{uw})_*(\Psi_{G/e,g,m}\cdot\pi_M^*([pt]))$ is equal to 
\begin{align}\label{eq:SimplifyFirstStep}\tag{C}
    \left(\prod_{v\in V\setminus\{u,w\}}\pi_{S_v}^*\psi_v\right)\cdot\left((\pi_{S_u\setminus\{w\}}^*\psi_u)(\pi_{S_w}^*D_{uw})+(\pi_{S_w\setminus\{u\}}^*\psi_w)(\pi_{S_u}^*D_{uw})+(\pi_{S_u}^*D_{uw})(\pi_{S_w}^*D_{uw})\right)\cdot\pi_M^*([pt]).
\end{align}
Now we're going to compute the three terms of \eqref{eq:SimplifyFirstStep} (after expanding the middle factor) using Facts \ref{fact:PsiPullback}--\ref{fact:Surplus}. The first and second terms of \eqref{eq:SimplifyFirstStep} are computed to be \eqref{eq:FirstTermContribution} and \eqref{eq:SecondTermContribution} below, respectively, whereas the third term of \eqref{eq:SimplifyFirstStep} will get expanded into four contributions \eqref{eq:ThirdTermContribution11}, \eqref{eq:ThirdTermContribution12Rewrite1}, \eqref{eq:ThirdTermContribution2}, and \eqref{eq:ThirdTermContribution3}.

\subsection*{The first term of \texorpdfstring{\eqref{eq:SimplifyFirstStep}}{Eqn C}} The first term of \eqref{eq:SimplifyFirstStep} is \begin{align}\label{eq:FirstTerm}
    \left(\prod_{v\in V\setminus\{u,w\}}\pi_{S_v}^*\psi_v\right)\cdot(\pi_{S_u\setminus\{w\}}^*\psi_u)\cdot(\pi_{S_w}^*D_{uw})\cdot\pi_M^*([pt]).
\end{align} We expand the factor $\pi_{S_w}^*D_{uw}$ using Fact \ref{fact:BoundaryPullback}, to see that \eqref{eq:FirstTerm} is equal to
\begin{align}\label{eq:FirstTermSingleTermPushforward}
    &\sum_{J\subseteq S\setminus S_w}\left(\prod_{v\in V\setminus\{u,w\}}\pi_{S_v}^*\psi_v\right)\cdot(\pi_{S_u\setminus\{w\}}^*\psi_u)\cdot\left(\begin{tikzpicture}[baseline=-0.65ex]
            \draw (0,0) -- (1,0);
            \draw (0,0) node {$\bullet$};
            \draw (1,0) node {$\bullet$};
            \draw (0,0)--++(135:.6);
            \draw (135:.6) node[left] {$u$};
            \draw (0,0)--++(180:.6);
            \draw (180:.6) node[left] {$w$};
            \foreach \x in {215,220,225,230,235} {
            \draw (0,0)--++(\x:.6);};
            \draw (225:.6) node[below left] {$J$};
            \foreach \th in {-60,-40,-20,0,20,40,60} {
            \draw (1,0)--++(\th:.6);
            };
            \draw (1.6,0) node[right] {$S\setminus(\{u,w\}\cup J)$};
            \filldraw[fill=white,thick] (0,0) circle(.25);
            \filldraw[fill=white,thick] (1,0) circle(.25);
            \draw (0,0) node {$0$};
            \draw (1,0) node {$g$};
        \end{tikzpicture}\right)\cdot\pi_M^*([pt])\\
        &\quad=\sum_{J\subseteq S\setminus S_w}(\iota_{uwJ})_*\left(\left(\prod_{v\in V\setminus\{u,w\}}\iota_{uwJ}^*\pi_{S_v}^*\psi_v\right)\cdot(\iota_{uwJ}^*\pi_{S_u\setminus\{w\}}^*\psi_u)\cdot\iota_{uwJ}^*\pi_M^*([pt])\right).\nonumber
\end{align}
For each choice of $J$, we are pushing forward from a product (see Notation \ref{not:BasicMgnNotation}\eqref{not:DecomposeBoundaryStratum}):
\begin{align}\label{eq:FirstTermProductDecomp}
\iota_{uwJ}:\Mbar_{0,\abs{J}+3}\times\Mbar_{g,\abs{V}+m-\abs{J}-2}\to\Mbar_{g,V\sqcup M}.
\end{align}
%It follows from Fact \ref{fact:PsiRestrict} that every factor inside the pushforward in \eqref{eq:FirstTermSingleTermPushforward} is itself pulled back from one of the factors of \eqref{eq:FirstTermProductDecomp}.
In fact, every factor inside the pushforward in \eqref{eq:FirstTermSingleTermPushforward} is itself pulled back from one of the factors of \eqref{eq:FirstTermProductDecomp}. To see this, first recall that Fact \ref{fact:PsiRestrict} expresses $\iota_{uwJ}^*\pi_{S_v}^*\psi_v$ as a forgetful-map-pullback of a $\psi$-class associated to some half-edge at a vertex of the graph in \eqref{eq:FirstTermSingleTermPushforward}. For $\iota_{uwJ}^*\pi_M^*([pt])$, we see that this class is pulled back from the second factor via the commutative diagram:
\begin{align*}
    \begin{tikzcd}[ampersand replacement=\&]
        \Mbar_{0,\abs{J}+3}\times\Mbar_{g,\abs{V}+m-\abs{J}-2}\arrow[d,"\iota_{uwJ}"]\arrow[r,"\pr_2"]\&\Mbar_{g,\abs{V}+m-\abs{J}-2}\arrow[d]\\
        \Mbar_{g,V\sqcup M}\arrow[r]\&\Mbar_{g,M}
    \end{tikzcd}
\end{align*} In other words, we can write each summand of \eqref{eq:FirstTermSingleTermPushforward} as 
\begin{align*}
    (\iota_{uwJ})_*(\pr_1^*\alpha_J\cdot\pr_2^*\beta_J),
\end{align*}
where $\alpha_J\in H^*(\Mbar_{0,\abs{J}+3})$ and $\beta_J\in H^*(\Mbar_{g,\abs{V}+m-\abs{J}-2})$. A dimension count shows that this expression is zero unless $\alpha_J$ and $\beta_J$ are zero-dimensional.

The factors $\pi_{S_v}^*\psi_v$ in \eqref{eq:FirstTermSingleTermPushforward} coming from vertices $v\in V\setminus(J\cup\{u,w\})$ will all contribute to $\beta_J.$ This is because the elements of $M$ are never forgotten, so the genus-$g$ vertex of the graph in \eqref{eq:FirstTermSingleTermPushforward} always survives stabilization.

In particular, the only factors that possibly contribute to $\alpha_J$ are the factors $\pi_{S_v}^*\psi_{v}$ for $v\in J$ and the factor $\pi_{S_u\setminus\{w\}}^*\psi_u$ --- and each of these may or may not contribute depending on whether the left vertex survives stabilization with respect to the given forgetful map. Because $J\subseteq S\setminus S_w,$ we see that for every factor that contributes to $\alpha_J$, \emph{$w$ is always forgotten.} It therefore follows from Fact \ref{fact:Surplus} that $\alpha_J=0$ unless $J=\emptyset$. (Actually, it follows that $\int_{\Mbar_{0,\abs{J}+3}}\alpha_J=0$, but this implies $\alpha_J=0$ since the integration map $H^{n-3}(\Mbar_{0,n})\to\Z$ is an isomorphism.)

% \new{Moreover, the above argument implies that $\alpha_J$ is a product of forgetful-map-pullbacks of $\psi$-classes on $\Mbar_{0,\abs{J}+3}$, where \emph{$w$ is always forgotten.} It follows from Fact \ref{fact:Surplus} that $\alpha_J=0$ unless $J=\emptyset$. (Actually, it follows that $\int_{\Mbar_{0,\abs{J}+3}}\alpha_J=0$, but this implies $\alpha_J=0$ since the integration map $H^{n-3}(\Mbar_{0,n})\to\Z$ is an isomorphism.)} 
%where $\alpha_J$ is (the pullback of) a zero-dimensional class on $\Mbar_{0,\abs{J}+3}$ and $\beta_J$ is the pullback of a zero-dimensional class on $\Mbar_{g,\abs{V}+m-\abs{J}-2}$. (For dimension reasons, if either $\alpha_J$ or $\beta_J$ is not zero-dimensional then the product vanishes.) In fact, Fact \ref{fact:PsiRestrict} implies not just that $\alpha_J$ is pulled back from $\Mbar_{0,\abs{J}+3}$ but that it is, after integration, a Kapranov degree (i.e. of the form \eqref{eq:KapranovDegree}) in which every subset $S_k$ excludes $w$.  Therefore $\alpha_J$ vanishes by Fact \ref{fact:Surplus} unless $J=\emptyset$.
Thus the first term of \eqref{eq:SimplifyFirstStep} contributes the single term:
\begin{align}\label{eq:FirstTermContribution}
    &(\iota_{uw})_*\left(\left(\prod_{v\in V\setminus\{u,w\}}\iota_{uw}^*\pi_{S_v}^*\psi_v\right)\cdot(\iota_{uw}^*\pi_{S_u\setminus\{w\}}^*\psi_u)\cdot\iota_{uw}^*\pi_M^*([pt])\right)\nonumber\\
    &\quad\quad=(\iota_{uw})_*\left(\left(\prod_{v\in V'\setminus\{\star\}}\pi_{S_v'}^*\psi_v\right)\cdot(\pi_{(S_u\cup\{\star\})\setminus\{u,w\}}^*\psi_\star)\cdot\pi_M^*([pt])\right).\tag{C1}
\end{align}

\subsection*{The second term of \texorpdfstring{\eqref{eq:SimplifyFirstStep}}{Eqn C}} By an identical calculation, the second term of \eqref{eq:SimplifyFirstStep} is equal to the pushforward: \begin{align}\label{eq:SecondTermContribution}\tag{C2}
    (\iota_{uw})_*\left(\left(\prod_{v\in V'\setminus\{\star\}}\pi_{S_v'}^*\psi_v\right)\cdot(\pi_{(S_w\cup\{\star\})\setminus\{u,w\}}^*\psi_\star)\cdot\pi_M^*([pt])\right).
\end{align}

\subsection*{The third term of \texorpdfstring{\eqref{eq:SimplifyFirstStep}}{Eqn C}} The third term of \eqref{eq:SimplifyFirstStep} is \begin{align}\label{eq:ThirdTerm}
    \left(\prod_{v\in V\setminus\{u,w\}}\pi_{S_v}^*\psi_v\right)\cdot(\pi_{S_u}^*D_{uw})\cdot(\pi_{S_w}^*D_{uw})\cdot\pi_M^*([pt]).
\end{align}
By Fact \ref{fact:BoundaryPullback}, \eqref{eq:ThirdTerm} is equal to 
\begin{align}\label{eq:ThirdTermSimplify}
    \left(\prod_{v\in V\setminus\{u,w\}}\pi_{S_v}^*\psi_v\right)\cdot\left(\begin{tikzpicture}[baseline=-0.65ex]
            \draw (0,0) -- (1,0);
            \draw (0,0) node {$\bullet$};
            \draw (1,0) node {$\bullet$};
            \draw (0,0)--++(135:.6);
            \draw (135:.6) node[left] {$u$};
            \draw (0,0)--++(225:.6);
            \draw (225:.6) node[left] {$w$};
            \foreach \th in {-60,-40,-20,0,20,40,60} {
            \draw (1,0)--++(\th:.6);
            };
            \draw (1.6,0) node[right] {$S_u\setminus(\{u,w\})$};
            \filldraw[fill=white,thick] (0,0) circle(.25);
            \filldraw[fill=white,thick] (1,0) circle(.25);
            \draw (0,0) node {$0$};
            \draw (1,0) node {$g$};
        \end{tikzpicture}\right)\cdot\left(\begin{tikzpicture}[baseline=-0.65ex]
            \draw (0,0) -- (1,0);
            \draw (0,0) node {$\bullet$};
            \draw (1,0) node {$\bullet$};
            \draw (0,0)--++(135:.6);
            \draw (135:.6) node[left] {$u$};
            \draw (0,0)--++(225:.6);
            \draw (225:.6) node[left] {$w$};
            \foreach \th in {-60,-40,-20,0,20,40,60} {
            \draw (1,0)--++(\th:.6);
            };
            \draw (1.6,0) node[right] {$S_w\setminus(\{u,w\})$};
            \filldraw[fill=white,thick] (0,0) circle(.25);
            \filldraw[fill=white,thick] (1,0) circle(.25);
            \draw (0,0) node {$0$};
            \draw (1,0) node {$g$};
        \end{tikzpicture}\right)\cdot\pi_M^*([pt]),
\end{align}
where recall that we are using the shorthand e.g.$$\left(\begin{tikzpicture}[baseline=-0.65ex]
            \draw (0,0) -- (1,0);
            \draw (0,0) node {$\bullet$};
            \draw (1,0) node {$\bullet$};
            \draw (0,0)--++(135:.6);
            \draw (135:.6) node[left] {$u$};
            \draw (0,0)--++(225:.6);
            \draw (225:.6) node[left] {$w$};
            \foreach \th in {-60,-40,-20,0,20,40,60} {
            \draw (1,0)--++(\th:.6);
            };
            \draw (1.6,0) node[right] {$S_u\setminus(\{u,w\})$};
            \filldraw[fill=white,thick] (0,0) circle(.25);
            \filldraw[fill=white,thick] (1,0) circle(.25);
            \draw (0,0) node {$0$};
            \draw (1,0) node {$g$};
        \end{tikzpicture}\right)=\sum_{J_1\subseteq S\setminus S_u}\left(\begin{tikzpicture}[baseline=-0.65ex]
            \draw (0,0) -- (1,0);
            \draw (0,0) node {$\bullet$};
            \draw (1,0) node {$\bullet$};
            \draw (0,0)--++(135:.6);
            \draw (135:.6) node[left] {$u$};
            \draw (0,0)--++(180:.6);
            \draw (180:.6) node[left] {$w$};
            \foreach \x in {215,220,225,230,235} {
            \draw (0,0)--++(\x:.6);};
            \draw (225:.6) node[below left] {$J_1$};
            \foreach \th in {-60,-40,-20,0,20,40,60} {
            \draw (1,0)--++(\th:.6);
            };
            \draw (1.6,0) node[right] {$S\setminus(\{u,w\}\cup J_1)$};
            \filldraw[fill=white,thick] (0,0) circle(.25);
            \filldraw[fill=white,thick] (1,0) circle(.25);
            \draw (0,0) node {$0$};
            \draw (1,0) node {$g$};
        \end{tikzpicture}\right).$$

Similarly the second boundary stratum in \eqref{eq:ThirdTermSimplify} is a sum over $J_2\subseteq S\setminus S_w.$ We now distribute these two sums --- by Fact \ref{fact:ProductOfBoundary} there are three types of nonzero contribution, corresponding to the cases $J_1=J_2$, $J_1\subsetneq J_2$, and $J_2\subsetneq J_1$:
\begin{enumerate}
    \item From the case $J_1=J_2$, we get \begin{align*}\pi_M^*([pt])\cdot\left(\prod_{v\in V\setminus\{u,w\}}\pi_{S_v}^*\psi_v\right)\cdot\sum_{J\subseteq S\setminus(S_u\cup S_w)}&\left({\begin{tikzpicture}[baseline=-0.65ex]
            \draw (0,0) -- (1,0);
            \draw (0,0) node {$\bullet$};
            \draw (1,0) node {$\bullet$};
            \draw (0,0) node[above right] {\tiny $-\psi$};
            \draw (0,0)--++(135:.6);
            \draw (135:.6) node[left] {$u$};
            \draw (0,0)--++(180:.6);
            \draw (180:.6) node[left] {$w$};
            \foreach \x in {215,220,225,230,235} {
            \draw (0,0)--++(\x:.6);};
            \draw (225:.6) node[below left] {$J$};
            \foreach \th in {-60,-40,-20,0,20,40,60} {
            \draw (1,0)--++(\th:.6);
            };
            \draw (1.6,0) node[right] {$S\setminus(\{u,w\}\cup J)$};
            \draw[very thick] (0,0)--(.5,0);
            \filldraw[fill=white,thick] (0,0) circle(.15);
            \filldraw[fill=white,thick] (1,0) circle(.15);
            \draw (0,0) node {\tiny$0$};
            \draw (1,0) node {\tiny$g$};
        \end{tikzpicture}}+\right.\\&\quad\quad\quad\quad\left.{\begin{tikzpicture}[baseline=-0.65ex]
            \draw (0,0) -- (1,0);
            \draw (0,0) node {$\bullet$};
            \draw (1,0) node {$\bullet$};
            \draw (1,0) node[above left] {\tiny $-\psi$};
            \draw (0,0)--++(135:.6);
            \draw (135:.6) node[left] {$u$};
            \draw (0,0)--++(180:.6);
            \draw (180:.6) node[left] {$w$};
            \foreach \x in {215,220,225,230,235} {
            \draw (0,0)--++(\x:.6);};
            \draw (225:.6) node[below left] {$J$};
            \foreach \th in {-60,-40,-20,0,20,40,60} {
            \draw (1,0)--++(\th:.6);
            };
            \draw (1.6,0) node[right] {$S\setminus(\{u,w\}\cup J)$};
            \draw[very thick] (1,0)--(.5,0);
            \filldraw[fill=white,thick] (0,0) circle(.15);
            \filldraw[fill=white,thick] (1,0) circle(.15);
            \draw (0,0) node {\tiny$0$};
            \draw (1,0) node {\tiny$g$};
        \end{tikzpicture}}\right).\end{align*}
        For the term where the $-\psi$ is on the genus-$g$ vertex, we use the same argument as we did in the analysis of the first term of \eqref{eq:SimplifyFirstStep} above --- applying Fact \ref{fact:Surplus} to the genus-zero vertex --- to conclude that we only have a nonzero contribution when $J=\emptyset$. 
        Thus we get the two contributions
        \begin{align}\label{eq:ThirdTermContribution11}
    \pi_M^*([pt])\cdot\left(\prod_{v\in V\setminus\{u,w\}}\pi_{S_v}^*\psi_v\right)\cdot&\left({\begin{tikzpicture}[baseline=-0.65ex]
            \draw (0,0) -- (1,0);
            \draw (0,0) node {$\bullet$};
            \draw (1,0) node {$\bullet$};
            \draw (1,0) node[above left] {\tiny $-\psi$};
            \draw (0,0)--++(135:.6);
            \draw (135:.6) node[left] {$u$};
            \draw (0,0)--++(225:.6);
            \draw (225:.6) node[left] {$w$};
            \foreach \th in {-60,-40,-20,0,20,40,60} {
            \draw (1,0)--++(\th:.6);
            };
            \draw (1.6,0) node[right] {$S\setminus(\{u,w\})$};
            \draw[very thick] (1,0)--(.5,0);
            \filldraw[fill=white,thick] (0,0) circle(.15);
            \filldraw[fill=white,thick] (1,0) circle(.15);
            \draw (0,0) node {\tiny$0$};
            \draw (1,0) node {\tiny$g$};
        \end{tikzpicture}}\right)\nonumber\\&=-(\iota_{uw})_*\left(\pi_M^*([pt])\cdot\psi_\star\cdot\prod_{v\in V'\setminus\{\star\}}\pi_{S_v'}^*\psi_v\right),\tag{C3}
\end{align}
where we have again used Fact \ref{fact:PsiRestrict},
and \begin{align}\label{eq:ThirdTermContribution12}
    \pi_M^*([pt])\cdot\left(\prod_{v\in V\setminus\{u,w\}}\pi_{S_v}^*\psi_v\right)\cdot\sum_{J\subseteq S\setminus(S_u\cup S_w)}\left(\begin{tikzpicture}[baseline=-0.65ex]
            \draw (0,0) -- (1,0);
            \draw (0,0) node {$\bullet$};
            \draw (1,0) node {$\bullet$};
            \draw (0,0) node[above right] {\tiny $-\psi$};
            \draw (0,0)--++(135:.6);
            \draw (135:.6) node[left] {$u$};
            \draw (0,0)--++(180:.6);
            \draw (180:.6) node[left] {$w$};
            \foreach \x in {215,220,225,230,235} {
            \draw (0,0)--++(\x:.6);};
            \draw (225:.6) node[left] {$J$};
            \foreach \th in {-60,-40,-20,0,20,40,60} {
            \draw (1,0)--++(\th:.6);
            };
            \draw (1.6,0) node[right] {$S\setminus(\{u,w\}\cup J)$};
            \draw[very thick] (0,0)--(.5,0);
            \filldraw[fill=white,thick] (0,0) circle(.15);
            \filldraw[fill=white,thick] (1,0) circle(.15);
            \draw (0,0) node {\tiny$0$};
            \draw (1,0) node {\tiny$g$};
        \end{tikzpicture}\right).
\end{align}
Applying Fact \ref{fact:WritePsiAsBoundary} to the left vertex, we may rewrite \eqref{eq:ThirdTermContribution12} as:
\begin{align}\label{eq:ThirdTermContribution12Rewrite1}
    -\pi_M^*([pt])\cdot&\left(\prod_{v\in V\setminus\{u,w\}}\pi_{S_v}^*\psi_v\right)\cdot\sum_{\substack{J\subseteq S\setminus(S_u\cup S_w)\\J'\sqcup J''=J\\
    J''\ne\emptyset}}\left({\begin{tikzpicture}[baseline=-0.65ex]
            \draw (0,0) -- (2,0);
            \draw (0,0) node {$\bullet$};
            \draw (1,0) node {$\bullet$};
            \draw (2,0) node {$\bullet$};
            \draw (0,0)--++(135:.6);
            \draw (135:.6) node[left] {$u$};
            \draw (0,0)--++(180:.6);
            \draw (180:.6) node[left] {$w$};
            \foreach \x in {215,220,225,230,235} {
            \draw (0,0)--++(\x:.6);};
            \draw (225:.6) node[left] {$J'$};
            \foreach \x in {-80,-85,-90,-95,-100} {
            \draw (1,0)--++(\x:.6);};
            \draw (1,0)++(-90:.6) node[below] {$J''$};
            \foreach \th in {-60,-40,-20,0,20,40,60} {
            \draw (2,0)--++(\th:.6);
            };
            \draw (2.6,0) node[right] {$S\setminus(\{u,w\}\cup J)$};
            \filldraw[fill=white,thick] (0,0) circle(.25);
            \filldraw[fill=white,thick] (1,0) circle(.25);
            \filldraw[fill=white,thick] (2,0) circle(.25);
            \draw (0,0) node {$0$};
            \draw (1,0) node {$0$};
            \draw (2,0) node {$g$};
        \end{tikzpicture}}\right)\nonumber\\
        &=-\pi_M^*([pt])\cdot\left(\prod_{v\in V\setminus\{u,w\}}\pi_{S_v}^*\psi_v\right)\cdot\sum_{\substack{J\subseteq S\setminus(S_u\cup S_w)\\J\ne\emptyset}}\left({\begin{tikzpicture}[baseline=-0.65ex]
            \draw (0,0) -- (2,0);
            \draw (0,0) node {$\bullet$};
            \draw (1,0) node {$\bullet$};
            \draw (2,0) node {$\bullet$};
            \draw (0,0)--++(135:.6);
            \draw (135:.6) node[left] {$u$};
            \draw (0,0)--++(225:.6);
            \draw (225:.6) node[left] {$w$};
            \foreach \x in {-80,-85,-90,-95,-100} {
            \draw (1,0)--++(\x:.6);};
            \draw (1,0)++(-90:.6) node[below] {$J$};
            \foreach \th in {-60,-40,-20,0,20,40,60} {
            \draw (2,0)--++(\th:.6);
            };
            \draw (2.6,0) node[right] {$S\setminus(\{u,w\}\cup J)$};
            \filldraw[fill=white,thick] (0,0) circle(.25);
            \filldraw[fill=white,thick] (1,0) circle(.25);
            \filldraw[fill=white,thick] (2,0) circle(.25);
            \draw (0,0) node {$0$};
            \draw (1,0) node {$0$};
            \draw (2,0) node {$g$};
        \end{tikzpicture}}\right)\nonumber\\
        &=-(\iota_{uw})_*\left(\pi_M^*([pt])\cdot\left(\prod_{v\in V'\setminus\{\star\}}\pi_{S_v'}^*\psi_v\right)\cdot\left({\begin{tikzpicture}[baseline=-0.65ex]
            \draw (1,0) -- (2,0);
            \draw (1,0) node {$\bullet$};
            \draw (2,0) node {$\bullet$};
            \draw (1,0)--++(135:.6);
            \draw (1,0)++(135:.6) node[left] {$\star$};
            \foreach \th in {-60,-40,-20,0,20,40,60} {
            \draw (2,0)--++(\th:.6);
            };
            \draw (2.6,0) node[right] {$S_\star'\setminus\{\star\}$};
            \filldraw[fill=white,thick] (1,0) circle(.25);
            \filldraw[fill=white,thick] (2,0) circle(.25);
            \draw (1,0) node {$0$};
            \draw (2,0) node {$g$};
        \end{tikzpicture}}\right)\right).\tag{C4}
\end{align}
where we have yet again applied Fact \ref{fact:Surplus} to the left vertex (where we have a Kapranov degree that excludes $u$ and $w$) to conclude $J'=\emptyset$. 
        
        \bigskip
        
        \item From the case $J_1\subsetneq J_2$, we get 
        \begin{align}\label{eq:ThirdTermContribution2}
        \pi_M^*([pt])\cdot&\left(\prod_{v\in V\setminus\{u,w\}}\pi_{S_v}^*\psi_v\right)\cdot\sum_{\substack{J_1\subset S\setminus S_u\\J_2\subset S\setminus S_w\\J_1\subsetneq J_2}}\left(\begin{tikzpicture}[baseline=-0.65ex]
            \draw (0,0) -- (2,0);
            \draw (0,0) node {$\bullet$};
            \draw (2,0) node {$\bullet$};
            \draw (1,0) node {$\bullet$};
            \draw (0,0)--++(135:.6);
            \draw (135:.6) node[left] {$u$};
            \draw (0,0)--++(180:.6);
            \draw (180:.6) node[left] {$w$};
            \foreach \x in {215,220,225,230,235} {
            \draw (0,0)--++(\x:.6);};
            \draw (225:.6) node[below left] {$J_1$};
            \foreach \x in {-80,-85,-90,-95,-100} {
            \draw (1,0)--++(\x:.6);};
            \draw (1,0)++(-90:.6) node[below] {$J_2\setminus J_1$};
            \foreach \th in {-60,-40,-20,0,20,40,60} {
            \draw (2,0)--++(\th:.6);
            };
            \draw (2.6,0) node[right] {$S\setminus(\{u,w\}\cup J_2)$};
            \filldraw[fill=white,thick] (0,0) circle(.25);
            \filldraw[fill=white,thick] (1,0) circle(.25);
            \filldraw[fill=white,thick] (2,0) circle(.25);
            \draw (0,0) node {$0$};
            \draw (1,0) node {$0$};
            \draw (2,0) node {$g$};
        \end{tikzpicture}\right)\nonumber\\
        &=\pi_M^*([pt])\cdot\left(\prod_{v\in V\setminus\{u,w\}}\pi_{S_v}^*\psi_v\right)\cdot\sum_{\substack{J_2\subset S\setminus S_w\\J_2\ne\emptyset}}\left(\begin{tikzpicture}[baseline=-0.65ex]
            \draw (0,0) -- (2,0);
            \draw (0,0) node {$\bullet$};
            \draw (2,0) node {$\bullet$};
            \draw (1,0) node {$\bullet$};
            \draw (0,0)--++(135:.6);
            \draw (135:.6) node[left] {$u$};
            \draw (0,0)--++(225:.6);
            \draw (225:.6) node[left] {$w$};
            \foreach \x in {-80,-85,-90,-95,-100} {
            \draw (1,0)--++(\x:.6);};
            \draw (1,0)++(-90:.6) node[below] {$J_2$};
            \foreach \th in {-60,-40,-20,0,20,40,60} {
            \draw (2,0)--++(\th:.6);
            };
            \draw (2.6,0) node[right] {$S\setminus(\{u,w\}\cup J_2)$};
            \filldraw[fill=white,thick] (0,0) circle(.25);
            \filldraw[fill=white,thick] (1,0) circle(.25);
            \filldraw[fill=white,thick] (2,0) circle(.25);
            \draw (0,0) node {$0$};
            \draw (1,0) node {$0$};
            \draw (2,0) node {$g$};
        \end{tikzpicture}\right)\nonumber\\
        &=(\iota_{uw})_*\left(\pi_M^*([pt])\cdot\left(\prod_{v\in V'\setminus\{\star\}}\pi_{S_v'}^*\psi_v\right)\cdot\left(\begin{tikzpicture}[baseline=-0.65ex]
            \draw (1,0) -- (2,0);
            \draw (2,0) node {$\bullet$};
            \draw (1,0) node {$\bullet$};
            \draw (1,0)--++(135:.6);
            \draw (1,0)++(135:.6) node[left] {$\star$};
            \foreach \th in {-60,-40,-20,0,20,40,60} {
            \draw (2,0)--++(\th:.6);
            };
            \draw (2.6,0) node[right]{$S_w\setminus\{u,w\}$};
            \filldraw[fill=white,thick] (1,0) circle(.25);
            \filldraw[fill=white,thick] (2,0) circle(.25);
            \draw (1,0) node {$0$};
            \draw (2,0) node {$g$};
        \end{tikzpicture}\right)\right).\tag{C5}
        \end{align}
        Here we again have used Fact \ref{fact:Surplus} to conclude $J_1=\emptyset,$ since otherwise the Kapranov degree on the left vertex excludes $u$ by the fact $J_1\cap S_u=\emptyset.$ 
\item By an identical calculation, from the case $J_1\subsetneq J_2$ we get the contribution
\begin{align}\label{eq:ThirdTermContribution3}\tag{C6}
    (\iota_{uw})_*\left(\pi_M^*([pt])\cdot\left(\prod_{v\in V'\setminus\{\star\}}\pi_{S_v'}^*\psi_v\right)\cdot\left(\begin{tikzpicture}[baseline=-0.65ex]
            \draw (1,0) -- (2,0);
            \draw (2,0) node {$\bullet$};
            \draw (1,0) node {$\bullet$};
            \draw (1,0)--++(135:.6);
            \draw (1,0)++(135:.6) node[left] {$\star$};
            \foreach \th in {-60,-40,-20,0,20,40,60} {
            \draw (2,0)--++(\th:.6);
            };
            \draw (2.6,0) node[right] {$S_u\setminus\{u,w\}$};
            \filldraw[fill=white,thick] (1,0) circle(.25);
            \filldraw[fill=white,thick] (2,0) circle(.25);
            \draw (1,0) node {$0$};
            \draw (2,0) node {$g$};
        \end{tikzpicture}\right)\right).
\end{align}
\end{enumerate}
\subsection*{Adding the contributions \texorpdfstring{$\eqref{eq:FirstTermContribution},\ldots,\eqref{eq:ThirdTermContribution3}$}{C1 up to C6}}
Note that the classes $\eqref{eq:FirstTermContribution},\ldots,\eqref{eq:ThirdTermContribution3}$ are all pushforwards along $\iota_{uw}$ (as originally desired!), and furthermore they are pushforwards of multiples of the product $\prod_{v\in V'\setminus\{\star\}}\pi_{S_v'}^*\psi_v.$ Observe that by Fact \ref{fact:PsiPullback}, we have
\begin{align*}
    \eqref{eq:FirstTermContribution}+\eqref{eq:ThirdTermContribution3}=\eqref{eq:SecondTermContribution}+\eqref{eq:ThirdTermContribution2}=-\eqref{eq:ThirdTermContribution11}
\end{align*}
Thus 
\begin{align*}
    (\Psi_{G,g,m}-\Psi_{G\setminus e,g,m})\cdot\pi_M^*([pt])&=\eqref{eq:FirstTermContribution}+\eqref{eq:SecondTermContribution}+\eqref{eq:ThirdTermContribution11}+\eqref{eq:ThirdTermContribution12Rewrite1}+\eqref{eq:ThirdTermContribution2}+\eqref{eq:ThirdTermContribution3}\\
    &=\eqref{eq:ThirdTermContribution12Rewrite1}-\eqref{eq:ThirdTermContribution11}\\
    &=(\iota_{uw})_*\left(\pi_M^*([pt])\cdot\left(\prod_{v\in V'\setminus\{\star\}}\pi_{S_v'}^*\psi_v\right)\cdot\left(\psi_\star-\left({\begin{tikzpicture}[baseline=-0.65ex]
            \draw (1,0) -- (2,0);
            \draw (1,0) node {$\bullet$};
            \draw (2,0) node {$\bullet$};
            \draw (1,0)--++(135:.6);
            \draw (1,0)++(135:.6) node[left] {$\star$};
            \foreach \th in {-60,-40,-20,0,20,40,60} {
            \draw (2,0)--++(\th:.6);
            };
            \draw (2.6,0) node[right] {$S_\star'\setminus\{\star\}$};
            \filldraw[fill=white,thick] (1,0) circle(.25);
            \filldraw[fill=white,thick] (2,0) circle(.25);
            \draw (1,0) node {$0$};
            \draw (2,0) node {$g$};
        \end{tikzpicture}}\right)\right)\right)\\
        &=(\iota_{uw})_*\left(\pi_M^*([pt])\cdot\left(\prod_{v\in V'\setminus\{\star\}}\pi_{S_v'}^*\psi_v\right)\cdot\pi_{S_\star'}^*\psi_\star\right)\hspace{1in}\text{(by Fact \ref{fact:PsiPullback})}\\
        &=(\iota_{uw})_*\left(\pi_M^*([pt])\cdot\prod_{v\in V'}\pi_{S_v'}^*\psi_v\right)\\
        &=(\iota_{uw})_*\left(\pi_M^*([pt])\cdot\Psi_{G/e,g,m}\right).
\end{align*}
Integrating the above yields
\begin{align}\label{eq:FinalDeletionContraction}
    \omega_{G,g,m}=\omega_{G\setminus e,g,m}+\omega_{G/e,g,m}.
\end{align}
We now complete the induction argument. Note that $G\setminus e$ has $\abs{V}$ vertices and $G/e$ has $\abs{V}-1$ vertices, while both have strictly fewer edges than $G$. By the inductive hypothesis, we have
\begin{align*}
    \omega_{G\setminus e,g,m}&=\begin{cases}
        (-1)^{\abs{V}}\chi_{G\setminus e}(-(2g-2+m))&(g,m)\ne(1,0)\\
        (-1)^{\abs{V}-1}\left.\deriv{}{x}\chi_{G\setminus e}(x)\right|_{x=0}&(g,m)=(1,0)
    \end{cases}
    \end{align*}
    and
    \begin{align*}
        \omega_{G/e,g,m}&=\begin{cases}
        (-1)^{\abs{V}-1}\chi_{G/e}(-(2g-2+m))&(g,m)\ne(1,0)\\
        (-1)^{\abs{V}-2}\left.\deriv{}{x}\chi_{G/e}(x)\right|_{x=0}&(g,m)=(1,0).
    \end{cases}
\end{align*}
Plugging these into the right side of \eqref{eq:FinalDeletionContraction} and applying the deletion-contraction formula \eqref{eq:DeletionContractionChromatic} for chromatic polynomials completes the proof of Theorem \ref{thm:main}.
\end{proof}

\section{Second Proof of the main theorem: Hyperplane Arrangements and Critical Points}
\label{sec:hyperplanes}

In this section we prove Theorems~\ref{thm:num_bounded_components} and \ref{thm:critical_point_interpretation}. This will complete the second proof of Theorem \ref{thm:main} given in Section \ref{sec:MLDegrees}.
In this section, we fix a finite simple graph $G=(V,E)$, and an integer $m\ge3$. Let $\mathcal{A}_{\R}(G;m)$ and $\mathcal{A}_{\C}(G;m)$ be as in Section \ref{sec:MLDegrees}.

Let us set up some notation: for
$v \in V$ and $i \in \set{0, \dots, m-2}$, let $A_{v,i}$ be the hyperplane $\set{\boldz \in \C^V \colon z_v = i}$ and for $e = (v,w) \in E$, let $H_e$ denote the hyperplane $\set{\boldz \in \C^V \colon z_v = z_w}$. For a hyperplane $H$ of either type, we choose a defining equation $f_H$.
We also write $u_{v,i}$ for $u_{A_{v,i}}$ and $u_e$ for $u_{H_e}$. 

The likelihood function from Section \ref{sec:MLDegrees} is given by:
\begin{align}\label{eq:LikelihoodFunctionG}
    F_{\calA(G;m), \boldu}=\prod_{H\in\calA(G;m)}f_H^{u_H}.
\end{align}
The logarithmic derivatives are:
\begin{equation}
    \frac{\partial \log F_{\calA(G;m), \boldu}}{\partial z_v} = \sum_{H \in \calA(G;m)} \frac{u_H}{f_H} \frac{\partial f_H}{\partial z_v} = \sum^{m-2}_{i=0} \frac{u_{v,i}}{z_v - i} + \sum_{e = (v,w) \in E} \frac{u_e}{z_v - z_w}
    \label{eqn:log_diff}
\end{equation}
\subsection{Counting bounded regions and critical points}
 We first prove Theorem \ref{thm:num_bounded_components} from Section \ref{sec:MLDegrees}:

\medskip

\noindent\textbf{Theorem \ref{thm:num_bounded_components} .}
    $\mathcal{A}_{\R}(G;m)$ has exactly $(-1)^{\abs{V}}\chi_G(-(m-2))$ bounded regions.

\begin{proof}[Proof of Theorem~\ref{thm:num_bounded_components}]
    We give a bijection from bounded regions of $\calA_{\R}(G;m)$ to acyclic orientations of $G$ together with a compatible coloring. First note that if $\boldx=(x_v)_{v\in V} \in \R^V \setminus \calA_{\R}(G;m)$ is in a bounded region, then $$0 < \min_{v\in V} x_v \leq \max_{v\in V} x_v < m-2,$$ as otherwise there is a ray in the region that only changes the minimal or the maximal coordinate, respectively.

    So every bounded component is contained in $[0,m-2]^V$.
    Let $\boldx \in [0,m-2]^V \setminus \calA_{\R}(G;m)$. We associate to it an orientation of $G$ as follows: if $e = \set{u,w} \in E$, then $x_u \not= x_w$ as $\boldx \notin \calA_{\R}(G;m)$, so we orient the edge $u\to w$ if $x_u > x_w$, and $w\to u$ otherwise. To every vertex, we associate the color $\sigma(v) = \ceil{x_v}$. This datum defines an acyclic orientation of $G$ together with a compatible coloring.
    
    We note this map defines a bijection between bounded components of $\R^V \setminus \calA_{\R}(G;m)$ and acyclic orientations of $G$ together with a compatible coloring in $\{1, \dots, m - 2\}$. By Theorem~\ref{thm:Stanley}, the number of pairs is $(-1)^{|V|} \chi_G( - (m - 2))$.
\end{proof}
\begin{cor}\label{cor:CountCriticalPoints}
    For general $\boldu\in\C^{\#\mathcal{A}(G;m)}$, the function $F_{\mathcal{A}(G;m),\boldu}$ has exactly $(-1)^{\abs{V}}\chi_G(-(m-2))$ critical points on its domain $\C^V\setminus\calA_{\C}(G;m)$, all of which are non-degenerate.
\end{cor}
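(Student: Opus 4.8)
The plan is to obtain this as a direct consequence of Varchenko's theorem (Theorem~\ref{thm:varschenko}), applied to the arrangement $\calA = \calA(G;m)$ with $n = \abs{V}$, combined with Theorem~\ref{thm:num_bounded_components}, which has just been proved. Finiteness and nondegeneracy of the critical locus, for general $\boldu$, are exactly the content of Theorem~\ref{thm:varschenko}\eqref{it:FinitelyManyCriticalPoints}, so the only thing left to pin down is the numerical value of the count.

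First I would fix a dense open set $U \subseteq \C^{\#\calA(G;m)}$ on which the conclusions of Theorem~\ref{thm:varschenko} hold. Because the positive orthant $\R_{>0}^{\#\calA(G;m)}$ is Zariski-dense in $\C^{\#\calA(G;m)}$, the set $U \cap \R_{>0}^{\#\calA(G;m)}$ is nonempty; fix $\boldu_0$ in it. Applying Theorem~\ref{thm:varschenko}\eqref{it:1PerRegion} to $\boldu_0$, the number of critical points of $F_{\calA(G;m),\boldu_0}$ equals the number of bounded regions of $\R^V \setminus \calA_\R(G;m)$, which by Theorem~\ref{thm:num_bounded_components} is $(-1)^{\abs{V}}\chi_G(-(m-2))$.

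Finally I would invoke Theorem~\ref{thm:varschenko}\eqref{it:CountCriticalPoints}, which asserts that for \emph{every} $\boldu \in U$ the number of critical points of $F_{\calA(G;m),\boldu}$ equals the topological invariant $(-1)^{\abs{V}}\chi\!\left(\C^V \setminus \calA_\C(G;m)\right)$, which does not depend on $\boldu$. Since this invariant has just been evaluated at $\boldu_0$ and found to equal $(-1)^{\abs{V}}\chi_G(-(m-2))$, it follows that for all general $\boldu$ the function $F_{\calA(G;m),\boldu}$ has exactly $(-1)^{\abs{V}}\chi_G(-(m-2))$ critical points on $\C^V \setminus \calA_\C(G;m)$, all nondegenerate.

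I do not expect a genuine obstacle: the statement really is a corollary, and the main point requiring a little care is the density argument used to select $\boldu_0$ simultaneously general (so that Theorem~\ref{thm:varschenko} applies) and real-positive (so that part~\eqref{it:1PerRegion} applies), after which everything is bookkeeping. One could alternatively bypass Theorem~\ref{thm:num_bounded_components} altogether, identifying $(-1)^{\abs{V}}\chi\!\left(\C^V\setminus\calA_\C(G;m)\right)$ with the characteristic polynomial of $\calA(G;m)$ evaluated at $1$ via Orlik--Solomon and then with $(-1)^{\abs{V}}\chi_G(-(m-2))$ directly, but the route above keeps the argument self-contained within this section.
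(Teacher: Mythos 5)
Your proposal is correct and follows essentially the same route as the paper: nondegeneracy from Theorem~\ref{thm:varschenko}\eqref{it:FinitelyManyCriticalPoints}, the count for a real-positive general $\boldu_0$ via Theorem~\ref{thm:varschenko}\eqref{it:1PerRegion} together with Theorem~\ref{thm:num_bounded_components}, and then the extension to all general complex $\boldu$ using the $\boldu$-independence of the count from Theorem~\ref{thm:varschenko}\eqref{it:CountCriticalPoints} and the Zariski-density of the positive orthant. No gaps.
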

\begin{proof}
    Non-degeneracy follows immediately from Theorem \ref{thm:varschenko}\eqref{it:FinitelyManyCriticalPoints}.
    
    By Theorem \ref{thm:num_bounded_components}, $\mathcal{A}_{\R}(G;m)$ has exactly $(-1)^{\abs{V}}\chi_G(-(m-2))$ bounded components. By Theorem~\ref{thm:varschenko}\eqref{it:1PerRegion}, for general $\boldu\in\R_{>0}^{\#\mathcal{A}(G;m)}$, $F_{\mathcal{A}(G;m),\boldu}$ has exactly $(-1)^{\abs{V}}\chi_G(-(m-2))$ critical points. By Theorem~\ref{thm:varschenko}(\ref{it:FinitelyManyCriticalPoints}--\ref{it:CountCriticalPoints}), there is a Zariski open subset $U\subseteq\C^{\#\calA(G;m)}$ such that for $\boldu\in U$, the number of critical points of $F_{\mathcal{A}(G;m),\boldu}$ is constant. Since $\R_{>0}^{\#\mathcal{A}(G;m)}$ is Zariski dense in $\C^{\#\mathcal{A}(G;m)}$, we conclude that for $\boldu\in U$, $F_{\mathcal{A}(G;m),\boldu}$ has exactly $(-1)^{\abs{V}}\chi_G(-(m-2))$ critical points.
\end{proof}
In fact, choosing $\boldu$ ``more generally'' (i.e. from a smaller subset $U$), we get the following slightly stronger statement.
\begin{lemma}\label{lem:CritAvoidsKV}
    For general $\boldu\in\C^{\#\mathcal{A}(G;m)}$, the critical points of $F_{\mathcal{A}(G;m),\boldu}$ all lie in the open subset $$\C^V\setminus\calA_{\C}(K_V;m)\subseteq\C^V\setminus\calA_{\C}(G;m),$$ where $K_V$ denotes the complete graph with vertex set $V$.
 \end{lemma}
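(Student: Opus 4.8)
The plan is to bound, for each non-edge $\{v,w\}$ of $G$, the set of parameters $\boldu$ for which some critical point of $F_{\calA(G;m),\boldu}$ lands on the hyperplane $\{z_v=z_w\}$, showing this set lies in a proper closed subvariety of $\C^{\#\calA(G;m)}$; intersecting with the dense open set furnished by Corollary \ref{cor:CountCriticalPoints} then finishes the proof. Throughout I set $N=\#\calA(G;m)$ and fix a non-edge $\{v,w\}$ (so $v\ne w$, $\{v,w\}\notin E$, and hence $\abs V\ge2$).

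First I would introduce the incidence variety
$$Y_{vw}=\left\{(z,\boldu)\in\left(\C^V\setminus\calA_\C(G;m)\right)\times\C^N\ :\ z_v=z_w,\ z\in\Crit\!\left(F_{\calA(G;m),\boldu}\right)\right\},$$
which is a closed subvariety of $(\C^V\setminus\calA_\C(G;m))\times\C^N$ once the critical-point equations $\partial_{z_u}\log F_{\calA(G;m),\boldu}=0$ ($u\in V$) are cleared of denominators (legitimate away from $\calA_\C(G;m)$). The goal is to show $\dim Y_{vw}\le N-1$; then the second projection carries $Y_{vw}$ to a constructible subset of $\C^N$ of dimension $<N$, whose closure $Z_{vw}$ is a proper closed subvariety. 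To bound the dimension, I would use the first projection $Y_{vw}\to\{z\in\C^V:z_v=z_w\}$, whose image lies in a $(\abs V-1)$-dimensional hyperplane that is \emph{not} one of the hyperplanes of $\calA(G;m)$ (precisely because $\{v,w\}$ is a non-edge), so that $\{z_v=z_w\}\setminus\calA_\C(G;m)$ is dense open in it. The fiber over such a point $z$ is the linear space cut out by the $\abs V$ equations $\sum_{H}\tfrac{1}{f_H(z)}\tfrac{\partial f_H}{\partial z_u}(z)\,u_H=0$, $u\in V$, which are linear in $\boldu$.

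The crux is that these $\abs V$ linear forms are linearly independent for every $z$ in the complement. For this I would single out, for each $u\in V$, the hyperplane $\{z_u=0\}\in\calA(G;m)$ (present since $m\ge2$): its $u_H$-coefficient in the $u'$-th equation equals $\tfrac{1}{z_u}$ when $u'=u$ and $0$ otherwise, so restricting the $\abs V$ forms to the coordinate subspace spanned by $\{u_{\{z_u=0\}}:u\in V\}$ yields nonzero scalar multiples of the $\abs V$ distinct coordinate functions, which are visibly independent. Hence every fiber of $Y_{vw}\to\{z_v=z_w\}\setminus\calA_\C(G;m)$ has dimension exactly $N-\abs V$, giving $\dim Y_{vw}\le(\abs V-1)+(N-\abs V)=N-1$ as wanted. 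Finally, taking $\boldu$ in the dense open set $U\setminus\bigcup_{\{v,w\}\notin E}Z_{vw}$, where $U\subseteq\C^N$ is as in Corollary \ref{cor:CountCriticalPoints}, gives finitely many critical points, none lying on a non-edge hyperplane $\{z_v=z_w\}$; equivalently all of them lie in $\C^V\setminus\calA_\C(K_V;m)$. The only point requiring care is the independence of the $\abs V$ linear forms, which the coordinate-hyperplane observation handles cleanly; the rest is a standard incidence-variety dimension count.
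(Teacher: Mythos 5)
Your proof is correct and follows essentially the same strategy as the paper's: an incidence variety between critical points and parameters, plus a dimension count showing the bad parameters form a proper closed subset. The only real difference is packaging --- the paper works with the full (projectivized) likelihood correspondence $\frakX^\circ$ over all of $\C^V\setminus\calA_{\C}(G;m)$ and imports its irreducibility and projective-bundle structure from the likelihood-correspondence literature (Huh, Huh--Sturmfels), whereas you restrict to each non-edge hyperplane separately and verify by hand, via the coordinate hyperplanes $\{z_u=0\}$, that the $\abs{V}$ critical equations have full rank in $\boldu$ at every point of the complement; your version is therefore self-contained where the paper cites, at the cost of a slightly longer argument.
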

 % \begin{rem}
 %     \new{Theorem \ref{thm:critical_point_interpretation} relates the count of critical points to an intersection problem on $\Mbar_{0,V\sqcup M}$. We will see (Equation \eqref{eq:IdentifyCVWithFiber} below) that the smaller set $\C^V\setminus\calA_{\C}(K_V;m)$ provides the needed link between $F_{\calA(G;m),\boldu}$ and $\M_{0,V\sqcup M}$.}
 % \end{rem}
 \begin{proof}
    We note that $F_{\calA(G;m), \boldu}$ and $F_{\calA(G;m), \lambda \boldu}$ for $\lambda \in \C^*$ have the same critical points, as \eqref{eqn:log_diff} is linear in $\boldu$. We can consider the following set
    \begin{equation*}
        \frakX^\circ := \set{ (\boldz, \boldu) \in \C^V \setminus \calA(G; m) \times \P^{\# \calA(G; m) - 1}  : \boldz \text{ is a critical point of } F_{\calA(G;m), \boldu} }.
    \end{equation*}
    This is called the (open) \emph{likelihood correspondence}, compare \cite[Section 2.2]{Huh2013} or \cite[Theorem 1.6]{HuhSturmfels}. The projection $\pr_1 : \frakX^\circ \rightarrow \C^V \setminus \calA(G; m)$ is a projective bundle of total dimension $\# \calA(G; m) - 1$, in particular the map is surjective, and as $\C^V \setminus \calA(G; m)$ is irreducible, so is $\frakX^\circ$.
    In particular, if $Y \subsetneq \C^V \setminus \calA(G; m)$ is Zariski closed, then $\pr_1^{-1}(Y)$ has positive codimension in $\frakX^\circ$. By dimension considerations, $\pr_2 (\pr_1^{-1}(Y))$ is contained in a proper subvariety of $\P^{\# \calA(G; m) - 1}$. Applying this to $Y=\calA(K_V; m)\setminus\calA(G; m)$, we conclude that for $\boldu \in \C^{\# \calA(G; m)}$ generic, $F_{\calA(G;m), \boldu}$ has no critical points in $\calA(K_V; m)$.
\end{proof}

\subsection{Comparing intersections}
In order to prove Theorem~\ref{thm:critical_point_interpretation}, we must relate the critical points of $F_{\calA(G;m), \boldu}$ to the intersection product on $\Mbar_{0, V \sqcup M}$ that defines $\omega_{G,0,m}$. We now describe this intersection product; the relationship to \eqref{eqn:log_diff} is Theorem \ref{thm:IntersectExplicitHypersurfaces}.

Let $y\in\M_{0, M}$ be the configuration $(0, 1, \dots, m - 2, \infty)$ of points on $\P^1$. Then we have a natural isomorphism 
\begin{align}\label{eq:IdentifyCVWithFiber}
    \C^{V}\setminus\calA_{\C}(K_V;m)\cong\pi_M^{-1}(y) \cap \M_{0,V \sqcup M}\subseteq\Mbar_{0,V\sqcup M},
\end{align}
sending a tuple $(z_v)_{v\in V}\in\C^{V}\setminus\calA_{\C}(K_V;m)$ to the configuration $((z_v)_{v\in V},0,1,\ldots,m-2,\infty)$ of points on $\P^1$.

Next, recall the \emph{Kapranov morphism} \cite{Kapranov1993}.
\begin{lemma}
\label{lem:kapranov_morphism}
    Let $S = \set{v, w_1, \ldots , w_k, p_\infty}$ be a finite set of cardinality $k + 2$.
    Then the complete linear system of the divisor class $\psi_v$ gives a birational morphism $|\psi_v| : \Mbar_{0,S} \rightarrow \P^{k-1}$ called the Kapranov morphism, which can be written in coordinates as follows.

    Let $C\in\Mbar_{0,S}$, and let $C'\subseteq C$ be the irreducible component containing $v$. Contracting all components except $C'$ gives a map $f:C\to C'.$ We choose coordinates $C'\cong\P^1$ sending $f(p_\infty)\mapsto\infty.$ Each element $s\in S$ is then identified with a point $z_s\in\P^1.$ 
    Then \begin{align*}
            \abs{\psi_v}(C) = \left[\frac{1}{z_v - z_{w_1}} : \quad \cdots\quad : \frac{1}{z_v - z_{w_k}}\right]\in\P^{k-1},
        \end{align*}
    with the convention that $\frac{1}{z_v-z_{w_i}}=0$ if $z_{w_i}=\infty$. (Note $z_{w_i}\ne z_v$ for all $i=1,\ldots,k$.)
\end{lemma}
We define, for $\boldu\in (\C^*)^{\calA(G;m)}$, an explicit collection of hypersurfaces $L_{v,\boldu}\subseteq\Mbar_{0,V\sqcup M}$ for all $v\in V$. Let $l_{v,\boldu}\subseteq\P^{\val(v)+m-1}$ be defined by the linear equation 
    \begin{align}\label{eq:HyperplaneEquation}
        \sum_{e=(v,w)}u_{e}x_w+\sum_{i=0}^{m-2}u_{v,i}x_i=0.
    \end{align}
    We then let $L_{v,\boldu}=(\abs{\psi_v}\circ\pi_{N[v]\cup M})^{-1}(l_{v,\boldu}),$ where $\abs{\psi_v}$ is the Kapranov morphism in the coordinates of Lemma \ref{lem:kapranov_morphism}.
    
    The following theorem is what will allow us to match up the computation of the critical points of $F_{\calA(G;m)}$ with the computation of the intersection product $\omega_{G,0,m}$:
\begin{thm}\label{thm:IntersectExplicitHypersurfaces}
    For generic $\boldu\in\C^{\calA(G;m)}$, the hypersurfaces $L_{v,\boldu}\subseteq\Mbar_{0,V\sqcup M}$ satisfy the following properties:
    \begin{enumerate}
        \item $L_{v,\boldu}$ is a representative of the cohomology class $\pi_{N[v]\cup M}^*(\psi_v)$,\label{it:HypersurfaceRepresentsClass}
        \item Under the isomorphism \eqref{eq:IdentifyCVWithFiber}, the intersection $$L_{v,\boldu}\cap(\pi_M^{-1}(y)\cap \M_{0,V\sqcup M})\subseteq\pi_M^{-1}(y)\cap \M_{0,V\sqcup M}$$ is identified with the vanishing locus $$Z\left(\frac{\partial \log F_{\calA(G;m), \boldu}}{\partial z_v}\right)\subseteq\C^{V}\setminus\calA_{\C}(K_V;m),$$ and \label{it:HypersurfaceLogDerivative}
        \item The intersection $\pi_M^{-1}(y)\cap\bigcap_{v\in V}L_{v,\boldu}$ is contained in $\M_{0,V\sqcup M}\subseteq\Mbar_{0,V\sqcup M}$.\label{it:NoIntersectionAtBoundary}
    \end{enumerate}
\end{thm}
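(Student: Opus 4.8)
The plan is to analyze the three claims essentially in order, since (1) is a cohomological statement that follows from the construction, (2) is a direct coordinate computation using Lemma \ref{lem:kapranov_morphism}, and (3) is the genuinely hard part requiring case analysis on boundary strata.

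For \eqref{it:HypersurfaceRepresentsClass}: the Kapranov morphism $|\psi_v|:\Mbar_{0,N[v]\cup M}\to\P^{\val(v)+m-1}$ has the property that the pullback of the hyperplane class $\mathcal{O}(1)$ is exactly $\psi_v$ (this is the defining property of the Kapranov morphism, \cite{Kapranov1993}). Since $l_{v,\boldu}$ is a hyperplane in $\P^{\val(v)+m-1}$, its preimage $|\psi_v|^{-1}(l_{v,\boldu})$ represents $\psi_v\in H^2(\Mbar_{0,N[v]\cup M})$, provided the generic such hyperplane pulls back to an honest (reduced, codimension-one) representative — which holds because $|\psi_v|$ is birational and we choose $\boldu$ generic. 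Pulling back further along $\pi_{N[v]\cup M}$, we conclude $L_{v,\boldu}=(|\psi_v|\circ\pi_{N[v]\cup M})^{-1}(l_{v,\boldu})$ represents $\pi_{N[v]\cup M}^*(\psi_v)$. I would spell out that the only subtlety is ensuring genericity of $\boldu$ keeps the preimage reduced of the expected dimension, which is automatic on the dense open locus where $|\psi_v|$ is an isomorphism and can be checked on the boundary by a dimension count.

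For \eqref{it:HypersurfaceLogDerivative}: restrict to the fiber $\pi_M^{-1}(y)\cap\M_{0,V\sqcup M}$, identified via \eqref{eq:IdentifyCVWithFiber} with $\C^V\setminus\calA_{\C}(K_V;m)$, so a point is a configuration where the $M$-markings sit at $0,1,\dots,m-2,\infty$ and $z_v$ is the coordinate of $v$. On this locus the curve is irreducible, so in Lemma \ref{lem:kapranov_morphism} we may take $C'=\P^1$ with $\eta_\infty=\infty$; the separating points $\eta_i$ are literally the marked points, so the Kapranov coordinates of $|\psi_v|\circ\pi_{N[v]\cup M}$ at such a configuration are $\left[\frac{1}{z_v-z_w}\right]_{w\in N[v]\setminus v}$ together with $\left[\frac{1}{z_v-i}\right]_{i=0}^{m-2}$ (the $\infty$ coordinate being $0$). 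Substituting these into the linear equation \eqref{eq:HyperplaneEquation} for $l_{v,\boldu}$ gives exactly $\sum_{e=(v,w)}\frac{u_e}{z_v-z_w}+\sum_{i=0}^{m-2}\frac{u_{v,i}}{z_v-i}=0$, which is \eqref{eqn:log_diff}. Hence $L_{v,\boldu}$ cut against the fiber is precisely $Z\!\left(\frac{\partial\log F_{\calA(G;m),\boldu}}{\partial z_v}\right)$. This is a routine but careful matching of conventions (handling the $z_w=\infty$ case, i.e. $w\in M$ being the point $\infty$, which contributes $0$, consistent with the fact that $\infty\notin\{0,\dots,m-2\}$).

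For \eqref{it:NoIntersectionAtBoundary}, the main obstacle: I must show that for generic $\boldu$, no point of $\bigcap_{v\in V}L_{v,\boldu}$ lying in $\pi_M^{-1}(y)$ lies on the boundary $\partial\Mbar_{0,V\sqcup M}$. The strategy is to stratify the boundary by dual graphs $\Gamma$ compatible with $\pi_M(\text{curve})=y$ — note since $y\in\M_{0,M}$, the $M$-marks all lie on a single component — and for each such $\Gamma$ bound the dimension of $\Mbar_\Gamma\cap\bigcap_{v}L_{v,\boldu}$. The key input is Lemma \ref{lem:CritAvoidsKV}: we already know the critical points avoid $\calA_{\C}(K_V;m)$, so combined with \eqref{it:HypersurfaceLogDerivative} the intersection inside the open part has the expected cardinality $(-1)^{|V|}\chi_G(-(m-2))$; what remains is purely to rule out extra boundary intersections. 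On a stratum $\Mbar_\Gamma$, Fact \ref{fact:PsiRestrict} tells us that $\iota_\Gamma^*\pi_{N[v]\cup M}^*\psi_v$ is pulled back from a single factor $\Mbar_{0,\HE(\Gamma,\alpha_{v})}$; when two or more markings $v$ route to the same factor, or when a marking's $\psi$-class becomes pulled back from a low-dimensional space, the restricted classes fail the Cerberus/surplus condition of Fact \ref{fact:Surplus} and their product vanishes on that stratum — so the corresponding intersection is empty for generic $\boldu$. I would organize this as: (i) reduce to strata where the $M$-marks and at least one $V$-mark share a component (forced by $y$); (ii) on each boundary stratum, use Fact \ref{fact:PsiRestrict} to rewrite the product $\prod_v\iota_\Gamma^*L_{v,\boldu}$ as a product of pulled-back Kapranov-type hypersurfaces on the factors $\Mtilde_\Gamma$; (iii) apply Fact \ref{fact:Surplus} (or a direct dimension count via Fact \ref{fact:ProductOfBoundary}-style bookkeeping) to see that either the expected-dimension count drops below zero — forcing emptiness for generic $\boldu$ by a Bertini/incidence-variety argument as in Lemma \ref{lem:CritAvoidsKV} — or the stratum contributes and must be tracked, but one checks combinatorially that no stratum survives. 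The delicate point is handling strata where $N[v]$-markings are split across a node in a way that still satisfies the surplus inequality on each piece; here I expect to need the specific structure of graphical arrangements (the fact that each $L_{v,\boldu}$ only involves markings in $N[v]\cup M$, and $M$ is ``shared'' by all) to close the argument. I would conclude by an incidence-variety/generic-smoothness argument: the total space of pairs $(p,\boldu)$ with $p\in\partial\Mbar_{0,V\sqcup M}\cap\pi_M^{-1}(y)$ and $p\in\bigcap_v L_{v,\boldu}$ has dimension strictly less than $\#\calA(G;m)$, so its projection to the $\boldu$-space misses a dense open set.
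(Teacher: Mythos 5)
Your treatments of parts \eqref{it:HypersurfaceRepresentsClass} and \eqref{it:HypersurfaceLogDerivative} match the paper's proof: \eqref{it:HypersurfaceRepresentsClass} is the functoriality of pullback combined with $\abs{\psi_v}^*\mathcal O(1)=\psi_v$, and \eqref{it:HypersurfaceLogDerivative} is the coordinate computation via Lemma \ref{lem:kapranov_morphism}. Both are fine.

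For part \eqref{it:NoIntersectionAtBoundary} there is a genuine gap. Your core mechanism --- restrict the classes to a boundary stratum via Fact \ref{fact:PsiRestrict}, observe that the surplus/Cerberus condition of Fact \ref{fact:Surplus} fails, and conclude that ``the corresponding intersection is empty for generic $\boldu$'' --- does not work. Fact \ref{fact:Surplus} is a statement about the vanishing of an intersection \emph{number} (equivalently, of a product of cohomology classes); it does not imply that specific effective representatives have empty set-theoretic intersection, even generically. The reason a Bertini-type argument cannot be invoked off the shelf is that the restrictions $L_{v,\boldu}\cap\M_\Gamma$ do not move in full linear systems: by Lemma \ref{lem:kapranov_morphism_restriction_gen_stratum}, on a stratum $\M_\Gamma$ each $L_{v,\boldu}$ becomes the pullback of a hyperplane $l_{v,\boldu,\Gamma}$ whose coefficients $a_q$ are prescribed sums of the $u$'s, with the coefficients of $x_{q_v}$ and $x_{q_{v,\infty}}$ forced to vanish, and several $v\in V_\alpha$ pull back through the \emph{same} low-dimensional factor $\Mbar_{0,\HE(\Gamma,\alpha)}$. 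So the dimension bound on your proposed incidence variety --- which is exactly what is needed --- has to be proved by hand, and the surplus condition supplies no leverage for it.

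The paper closes this by a mechanism you have not identified. First a counting/pigeonhole step produces a single vertex $\alpha$ of $\Gamma$ with $\abs{V_\alpha}>\val(\alpha)-m$ (if $\alpha=\alpha_M$) or $\abs{V_\alpha}>\val(\alpha)-3$ (otherwise). Then the combinatorial Lemma \ref{lem:combinatorial_assignment} (whose proof is itself a nontrivial case analysis with spanning forests and the induced partition of $M$) produces an ordering $v_1,\ldots,v_k$ of (part of) $V_\alpha$ and an assignment $\phi(v_i)\in N[v_i]\sqcup M$ such that the parameter $u_{v_{i},\phi(v_{i})}$ is ``fresh'' --- it appears in the equation of $l_{v_{i},\boldu,\Gamma}$ with a nonzero coordinate $x_{h_{\alpha\to\phi(v_i)}}$ (this is what conditions \eqref{it:phi_neighbor} and \eqref{it:phi_q_distinct} guarantee) but in none of the earlier equations (condition \eqref{it:phi_sorted}). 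This lets one realize each successive hyperplane in the universal family over $\boldu$-space as the graph of a rational function in the fresh variable, forcing the codimension of the universal intersection to increase by exactly one at each step; condition \eqref{it:phi_overdet} then makes the total codimension exceed the dimension of the stratum. Without this construction (or a substitute for it), your step (iii) does not go through, and the ``delicate point'' you flag at the end is precisely where the proof lives.
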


As we will see below, statements \eqref{it:HypersurfaceRepresentsClass} and \eqref{it:HypersurfaceLogDerivative} are straightforward to prove, while proving \eqref{it:NoIntersectionAtBoundary}, i.e. showing that there are no intersection points in the boundary of $\Mbar_{0,V\sqcup M}$, is a somewhat involved computational argument. In preparation, we now introduce some notation. 
\begin{notation}\label{not:alphav}
    Let $\Gamma$ be a stable $(V\sqcup M)$-marked genus-zero weighted graph. We use the following notation for the rest of this section --- see Figure \ref{fig:PsiRestrict2} for an illustration.
    \begin{itemize}
        \item As usual, $\Mbar_\Gamma\subseteq\Mbar_{0,V\sqcup M}$ denotes the corresponding boundary stratum, with interior $\M_\Gamma.$ We have $\M_{\Gamma}\cong\prod_{\alpha\in V(\Gamma)}\M_{0,\HE(\Gamma,\alpha)},$ and we write $\pr_\alpha:\M_{\Gamma}\to\M_{0,\HE(\Gamma,\alpha)}$ for the $\alpha$th projection.
        \item For $\alpha\in V(\Gamma)$ and $p\in V\sqcup M$, we write $h_{\alpha\to p}\in\HE(\Gamma,\alpha)$ for the half-edge at $\alpha$ in the direction of $p$. (This is well-defined since $\Gamma$ is a tree.)
        \item Let $v\in V$. For brevity, we write
        \begin{align*}
            \alpha_v&:=\alpha_{N[v]\sqcup M,v}\in V(\Gamma),\\q_v&:=q_{N[v]\sqcup M,v}\in\HE(\Gamma,\alpha_v),\quad\quad\text{and}\\Q_v&:=Q_{N[v]\sqcup M,v}\subseteq\HE(\Gamma,\alpha_v),
        \end{align*}
        where $\alpha_{N[v]\sqcup M,v}$, $q_{N[v]\sqcup M,v}$, and $Q_{N[v]\sqcup M,v}$ are as in Fact \ref{fact:PsiRestrict}.
        \item For $\alpha\in V(\Gamma)$, we write $V_\alpha=\{v\in V:\alpha_v=\alpha\}$.
        \item We name a distinguished element $p_\infty\in M$. For brevity, we write $q_{v,\infty}:=h_{\alpha_v\to p_\infty}\in Q_v$.
    \end{itemize}
\end{notation}
\begin{figure}
        \centering
        \begin{tikzpicture}[scale=1.5]
            \draw[very thick] (-1,0) -- (2,0);
            \draw[very thick] (1,0)--(1,-1);
            \draw[very thick] (1,-1)--++(-30:1);
            \draw (-1,0) node {$\bullet$};
            \draw (1,0) node {$\bullet$};
            \draw (2,0) node {$\bullet$};
            \draw (1,-1) node {$\bullet$};
            \draw (1,-1)++(-30:1) node {$\bullet$};
            \draw[very thick,densely dashed] (1,-1)++(-30:1)--++(0:.6);
            \draw (1,-1)++(-30:1)--++(-60:.6);
            \foreach \th in {-60,-30,0,30,60} {
            \draw[very thick,densely dashed] (-1,0)--++(180+\th:.6);
            };
            \foreach \th in {20,60} {
            \draw[very thick,densely dashed] (2,0)--++(\th:.6);
            };
            \draw[very thick,densely dashed] (1,-1)--++(-90:.6);
            \draw[very thick,densely dashed] (1,-1)--++(-90+20:.6);
            \draw[red,very thick] (1,-1)--++(-90+60:.5);
            \draw[red,very thick] (1,-1)--++(90:.5);
            \draw (-1,0)++(240:.6) node[below] {$v$};
            \draw (2.3,-2) node[below] {$p_{\infty}$};
            \draw[line width=3 pt] (-1,0)++(240:.6)--(-1,0);
            \draw[very thick,red] (1,-1)--++(-90-40:.6);
            \draw[very thick] (1,-1)++(-90-40:.6)--++(-90-40:.6);
            \draw[line width=3 pt] (1,-1)++(-90-40:1.2)--++(-180:.6);
            \draw[line width=3 pt] (1,-1)++(-90-40:1.2)--++(-130:.6);
            \draw[line width=3 pt,red] (1,-1)--++(-90-20:.6);
            \draw[line width=3 pt,red] (1,-1)--++(-90+40:.6);
            \draw[line width=3 pt] (1,-1)++(-90+60:1)--++(-60:.6);
            \draw (.4,-1)++(-90-40:1.2) node[left] {$a$};
            \draw (1,-1)++(-90-40:1.2)++(-130:.6) node[below left] {$b$};
            \draw (2.4,-.55) node {$\alpha_{v}$};
            \draw (2.2,-1.05) node[right] {$q_{v,\infty}=h_{\alpha_v\to p_\infty}$};
            \draw[->] (2.2,-.5) to[out=170,in=45] (1.2,-.9);
            \draw[->] (2.2,-1) to[out=170,in=45] (1.35,-1.15);
            \draw (.4,-.75) node[left] {$q_v=h_{\alpha_v\to v}$};
            \draw[->] (-.2,-.6) to[out=45,in=180] (.95,-.7);
            \draw (-.4,-1.5) node[left] {$h_{\alpha_v\to a}=h_{\alpha_v\to b}$};
            \draw[->] (-.8,-1.4) to[out=10,in=135] (.7,-1.3);
            \filldraw[fill=white,very thick] (-1,0) circle(.17);
            \filldraw[fill=white,very thick] (1,0) circle(.17);
            \filldraw[fill=white,very thick] (2,0) circle(.17);
            \filldraw[fill=white,very thick] (1,-1) circle(.17);
            \filldraw[fill=white,very thick] (1,-1)++(-30:1) circle(.17);

            \filldraw[fill=white,very thick] (1,-1)++(-130:1.2) circle(.17);
            \draw (-1,0) node {$0$};
            \draw (1,0) node {$0$};
            \draw (2,0) node {$0$};
            \draw (1,-1) node {$0$};
            \draw (1,-1)++(-30:1) node {$0$};
            \draw (1,-1)++(-130:1.2) node {$0$};
        \end{tikzpicture}
        
        \caption{An illustration of Notation \ref{not:alphav}. Here $N[v]\sqcup M$ is the set of solid marked half-edges, and $Q_{v}$ is the set of half-edges in red.}
        \label{fig:PsiRestrict2}
    \end{figure}
    
The following lemma describes how $L_{v,\boldu}$ intersects locally closed boundary strata $\M_{\Gamma}$.
\begin{lemma}
    \label{lem:kapranov_morphism_restriction_gen_stratum}
    Let $\Gamma$ be a stable $(V\sqcup M)$-marked genus-0 weighted graph.
    \begin{enumerate}
        \item For generic $\boldu\in\C^{\calA(G;m)},$ $L_{v,\boldu}$ does not contain $\Mbar_{\Gamma}$.\label{it:LvuDoesntContainStratum}
        \item 
        For $q\in Q_v\setminus\{q_v,q_{v,\infty}\}$, let $$a_q = \sum_{\substack{p \in N[v]\sqcup M \\ h_{\alpha_v\to p} = q}} u_{v,p}.$$ Let $l_{v,\boldu,\Gamma}\subseteq\P^{\abs{Q_v}-3}$ be the hyperplane defined by the equation
        \begin{align}
        \label{eqn:kapranov_morphism_restriction_gen_stratum}
            \sum_{q \in Q_v \setminus\{q_v, q_{v,\infty}\}} a_q x_q = 0.
        \end{align} 
        Then $$L_{v,\boldu}\cap\M_\Gamma=(\abs{\psi_{q_v}}\circ\pi_{Q_v}\circ\pr_{\alpha_v})^{-1}(l_{v,\boldu,\Gamma})\subseteq\M_{\Gamma}.$$
        \label{it:EquationOfRestrictionOfLvu} 
    \end{enumerate}
\end{lemma}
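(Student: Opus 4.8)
The plan is to establish part~(2) first, for every $\boldu\in(\C^*)^{\calA(G;m)}$, and then obtain part~(1) as a corollary. The two ingredients are Lemma~\ref{lem:kapranov_morphism} (the coordinate description of the Kapranov morphism) and the analysis of forgetful maps along boundary strata contained in the proof of Fact~\ref{fact:PsiRestrict}. Write $\Gamma'$ for the stabilization of $\Gamma$ obtained by forgetting the marked half-edges in $V\setminus N[v]$ (this is the graph $\Gamma'$ of Fact~\ref{fact:PsiRestrict} with $P=V\sqcup M$, $P'=N[v]\sqcup M$, $i=v$), let $\alpha_i'\in V(\Gamma')$ be the vertex carrying the half-edge $v$, so that $\alpha_v=\st_{\Gamma'}^{v}(\alpha_i')$ and $\st_{\Gamma',\alpha_i'}^{he}$ identifies $\HE(\Gamma',\alpha_i')$ with $Q_v$. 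The commutative diagram in the proof of Fact~\ref{fact:PsiRestrict} shows that $\pi_{N[v]\cup M}$ carries $\M_\Gamma$ into $\M_{\Gamma'}$, and that on the factor over $\alpha_v$ the induced map is the ``forget-and-rename'' morphism $\pi_{Q_v}\colon\Mbar_{0,\HE(\Gamma,\alpha_v)}\to\Mbar_{0,Q_v}$; the remaining components of the forgotten curve are irrelevant to $\abs{\psi_v}$.

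I would then compute $\abs{\psi_v}\circ\pi_{N[v]\cup M}$ restricted to $\M_\Gamma$ in coordinates. By Lemma~\ref{lem:kapranov_morphism}, for $C''\in\M_{\Gamma'}$ and a trivialization of the component of $C''$ through $v$ that places the half-edge toward $p_\infty$ at $\infty$, the coordinate of $\abs{\psi_v}(C'')$ indexed by $p\in(N[v]\cup M)\setminus\{v,p_\infty\}$ equals $1/(z_v-z_p)$, where $z_p$ is the position on that component of the half-edge pointing toward $p$. The point is that $z_p$ depends only on the direction $h_{\alpha_v\to p}\in Q_v$; two combinatorial facts, both consequences of the definition of stabilization, are needed for this to make sense: (i) no $p\ne v$ satisfies $h_{\alpha_v\to p}=q_v$, so there is no $1/0$ term --- indeed, if $v$ is not attached directly at $\alpha_v$, the subtree of $\Gamma$ in direction $q_v$ is entirely contracted during stabilization, and a surviving marked point inside it would produce a trivalent (hence stable, hence uncontracted) vertex of that subtree; and (ii) $q_v\ne q_{v,\infty}$ (by the same argument), and every element of $Q_v$ is $h_{\alpha_v\to p}$ for some surviving $p$, so $\abs{Q_v}=\abs{\HE(\Gamma',\alpha_i')}\ge 3$ and $Q_v\setminus\{q_v,q_{v,\infty}\}\ne\emptyset$. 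Together these show that $\abs{\psi_v}\circ\pi_{N[v]\cup M}|_{\M_\Gamma}$ factors as $\phi\circ\abs{\psi_{q_v}}\circ\pi_{Q_v}\circ\pr_{\alpha_v}$, where $\phi$ is the linear embedding of $\P^{\abs{Q_v}-3}$ into the target $\P^{\val(v)+m-1}$ of $\abs{\psi_v}$ whose $p$-th coordinate function is $x_{h_{\alpha_v\to p}}$ (read as $0$ when $h_{\alpha_v\to p}=q_{v,\infty}$).

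Pulling the linear form $\sum_{e=(v,w)}u_e x_w+\sum_i u_{v,i}x_i$ cutting out $l_{v,\boldu}$ back along $\phi$ replaces each $x_p$ by $x_{h_{\alpha_v\to p}}$ (or by $0$), and collecting terms by direction produces exactly the form $\sum_{q\in Q_v\setminus\{q_v,q_{v,\infty}\}}a_q x_q$ defining $l_{v,\boldu,\Gamma}$. Since $\phi$ is a closed embedding and $L_{v,\boldu}=(\abs{\psi_v}\circ\pi_{N[v]\cup M})^{-1}(l_{v,\boldu})$, taking preimages through the factorization gives $L_{v,\boldu}\cap\M_\Gamma=(\abs{\psi_{q_v}}\circ\pi_{Q_v}\circ\pr_{\alpha_v})^{-1}(l_{v,\boldu,\Gamma})$, which is part~(2); the $\Aut(\Gamma)$-action on $\Mtilde_\Gamma$ is compatible with all of this, so passing between $\Mtilde_\Gamma$ and $\M_\Gamma$ causes no difficulty. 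For part~(1), observe that for each $q\in Q_v\setminus\{q_v,q_{v,\infty}\}$ the coefficient $a_q$ is a nonempty sum of distinct coordinates of $\boldu$, hence a nonzero linear function of $\boldu$; so for $\boldu$ away from the finite union of hyperplanes $\{a_q=0\}$ (over all $v$, all stable $(V\sqcup M)$-marked genus-zero graphs $\Gamma$, and all such $q$) each $l_{v,\boldu,\Gamma}$ is a proper linear subspace of $\P^{\abs{Q_v}-3}$. Since $\abs{\psi_{q_v}}\circ\pi_{Q_v}\circ\pr_{\alpha_v}$ is dominant ($\pr_{\alpha_v}$ is a projection, $\pi_{Q_v}$ a forgetful morphism, $\abs{\psi_{q_v}}$ birational), its preimage $L_{v,\boldu}\cap\M_\Gamma$ is a proper closed subset of $\M_\Gamma$; and since $\M_\Gamma$ is dense in $\Mbar_\Gamma$ while $L_{v,\boldu}$ is closed, $\Mbar_\Gamma\not\subseteq L_{v,\boldu}$.

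The main obstacle is the combinatorial bookkeeping of the second paragraph: reconciling the separating-points recipe of Lemma~\ref{lem:kapranov_morphism} with the stabilization data $(\alpha_v,q_v,Q_v)$ of Fact~\ref{fact:PsiRestrict}, and in particular proving facts (i) and (ii) --- these are precisely what guarantees that the restricted Kapranov map still lands in projective space and that its coordinates degenerate exactly according to the directions recorded by $Q_v$. Once that is in hand, pulling back the hyperplane and the dominance-and-closure argument for part~(1) are routine.
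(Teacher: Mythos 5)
Your proposal is correct and follows essentially the same route as the paper: your linear map $\phi$ is precisely the paper's $T_{v,\Gamma}$, the factorization $\abs{\psi_v}\circ\pi_{N[v]\cup M}|_{\M_\Gamma}=T_{v,\Gamma}\circ\abs{\psi_{q_v}}\circ\pi_{Q_v}\circ\pr_{\alpha_v}$ is the paper's key identity, and part (1) is deduced from part (2) in both treatments via dominance of that composition together with the non-vanishing of some coefficient $a_q$ for generic $\boldu$. The only (welcome) difference is that you spell out the combinatorial facts (i)--(ii) about stabilization that the paper leaves as a parenthetical remark.
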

\begin{proof}
    For generic $\boldu$, we have that $l_{v,\boldu,\Gamma}$ is really a hyperplane (at least one $a_q$ doesn't vanish). Together with the fact that the composition $\abs{\psi_{q_v}}\circ\pi_{Q_v}\circ\pr_{\alpha_v} \colon \M_\Gamma \rightarrow \P^{\abs{Q_v}-3}$ is dominant, we see that \eqref{it:LvuDoesntContainStratum} follows from 
    \eqref{it:EquationOfRestrictionOfLvu}.

    Note that each element of $(N[v]\sqcup M) \setminus v$ corresponds to a half-edge in $Q_v \setminus \set{q_v}$, where $p_\infty$ corresponds to $q_{v,\infty}$. This induces a  natural inclusion $F \colon\P^{\abs{Q_v} - 3}\into\P^{\abs{N[v]\sqcup M}-3},$ identifying $\P^{\abs{Q_v} - 3}$ with the subspace of $\P^{\abs{N[v]\sqcup M}-3}$ where coordinates corresponding to the same half-edge are equal (and the coordinate of $p_\infty$ is set to 0). Explicitly:
    \begin{align*}
        F \colon \P^{\abs{Q_v} - 3}&\into\P^{\abs{N[v]\sqcup M}-3}\\
        [x_q]_{q\in Q_v\setminus\{q_v,q_{v,\infty}\}}&\mapsto\left[x_{h_{\alpha_v\to w}}\right]_{w\in N[v]\sqcup M \setminus \{v, p_\infty\}},
    \end{align*}
    where we take $x_{h_{\alpha_v\to w}}=0$ if $h_{\alpha_v\to w}=q_{v,\infty}.$ (Note that for $w\in N[v]\sqcup M \setminus \{v, p_\infty\}$, we have $h_{\alpha_v\to w}\ne q_v$, so this map is well-defined.) By definition of $l_{v,\boldu,\Gamma}$, we have: $$F^{-1}(l_{v,\boldu}) = l_{v,\boldu,\Gamma}.$$
    
    %Consider the spaces $\P^{\abs{Q_v} - 3}, \P^{\abs{N[v]\sqcup M}-3}$ with homogeneous coordinates $x_q$ for $q \in Q_v \setminus \set{q_v, q_{v,\infty}}$ and $y_w$ for $w \in N[v]\sqcup M \setminus \{v, p_\infty\}$, respectively. Let $T_{v,\Gamma} \colon \P^{\abs{Q_v} - 3} \rightarrow \P^{\abs{N[v]\sqcup M}-3}$ be the linear map given by $$T_{v,\Gamma}([x_q]_{q\in Q_v\setminus\{q_v,q_{v,\infty}\}})=\left[x_{h_{\alpha_v\to w}}\right]_{w\in N[v]\sqcup M \setminus \{v, p_\infty\}},$$ where we take $x_{h_{\alpha_v\to w}}=0$ if $h_{\alpha_v\to w}=q_{v,\infty}.$ (Note that for $w\in N[v]\sqcup M \setminus \{v, p_\infty\}$, we have $h_{\alpha_v\to w}\ne q_v$, so this map is well-defined.) By definition of $l_{v,\boldu,\Gamma}$, we have $T_{v,\Gamma}^{-1}(l_{v,\boldu}) = l_{v,\boldu,\Gamma}$.

    Moreover, by applying  Lemma \ref{lem:kapranov_morphism} we see that
    \begin{align*}
        \abs{\psi_v} \circ \pi_{N[v]\sqcup M}(C) &= (F \circ \abs{\psi_{q_v}}\circ\pi_{Q_v}\circ\pr_{\alpha_v})(C)
    \end{align*}
    holds for $C \in \M_{\Gamma}$.
    With $L_{v,\boldu}=(\abs{\psi_v}\circ\pi_{N[v]\cup M})^{-1}(l_{v,\boldu})$, the statement now follows.
\end{proof}
\begin{rem}
    Lemma \ref{lem:kapranov_morphism_restriction_gen_stratum} is a manifestation, via explicit effective divisors, of the equation \eqref{eq:PsiRestrict2} which holds in cohomology.
\end{rem}

We will use the following combinatorial lemma as the basis of our genericity conditions on $\boldu$.
\begin{lemma}
    \label{lem:combinatorial_assignment}
    Let $\Gamma$ be a stable $(V\sqcup M)$-marked genus-0 weighted graph, such that there exists a (necessarily unique) vertex $\alpha_M$ separating all elements of $M$. 
    Suppose there exists $\alpha\in V(\Gamma)$ such that one of the following two holds:
    \begin{enumerate}[label=(\roman*)]
        \item $\alpha=\alpha_M$ and $\abs{V_{\alpha}}>\val(\alpha)-m$, or\label{it:AlphaM}
        \item $\alpha\ne\alpha_M$ and $\abs{V_\alpha}>\val(\alpha)-3$.\label{it:NotAlphaM}
    \end{enumerate}
    Then there is a finite sequence $W=(v_1, \dots, v_k),$ with $ v_i\in V_\alpha$ all distinct, and a map $\phi \colon \{v_1,\ldots,v_k\} \rightarrow V \sqcup M$, such that the following hold:
    \begin{enumerate}
        \item \label{it:phi_neighbor}
        For $i=1,\ldots,k$, we have $\phi(v_i) \in (N[v_i] \sqcup M)$,
        \item \label{it:phi_q_distinct}
        For $i=1,\ldots,k$, we have $h_{\alpha\to\phi(v_i)}\ne h_{\alpha\to p_\infty}$,
        \item \label{it:phi_sorted}
        If $i>j,$ then $\phi(v_i) \ne v_j$, and
        \item \label{it:phi_overdet}
        Let $Q_W = \bigcup_{i=1}^k Q_{ v_i}$. Then $$k > \begin{cases}
            |Q_W| - m&\text{if $\alpha = \alpha_M$}\\
            |Q_W| - 3&\text{if $\alpha \not= \alpha_M$}.
        \end{cases}
        $$
    \end{enumerate}
\end{lemma}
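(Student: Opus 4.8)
The plan is to take $W$ to be an enumeration of the whole set $V_\alpha$ and to build the pivot map $\phi$ so that, whenever it is possible, $\phi(v_i)$ points into the hyperplane equation defining $l_{v_i,\boldu,\Gamma}$ (this flexibility is what makes the lemma useful later), with a harmless fallback $\phi(v_i)=v_i$ otherwise. With such a choice, conditions \eqref{it:phi_neighbor} and \eqref{it:phi_sorted} become formal, \eqref{it:phi_overdet} is a one-line count, and essentially all of the content is concentrated in \eqref{it:phi_q_distinct}. So I would fix $\alpha\in V(\Gamma)$ satisfying \ref{it:AlphaM} or \ref{it:NotAlphaM}, put $c:=m$ in case \ref{it:AlphaM} and $c:=3$ in case \ref{it:NotAlphaM}, so that the hypothesis reads $\abs{V_\alpha}>\val(\alpha)-c$.

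First I would record three facts, all immediate from the definitions and Fact \ref{fact:PsiRestrict} (compare Figure \ref{fig:PsiRestrict}): for every $v\in V_\alpha$ one has $q_v=h_{\alpha\to v}$; the half-edge $q_{v,\infty}=h_{\alpha\to p_\infty}$ lies in $Q_v$; and $\abs{Q_v}\ge3$, because $Q_v$ is the set of half-edges at the vertex of the $(N[v]\sqcup M)$-stabilization $\Gamma'$ corresponding to $\alpha_v=\alpha$, and $\Gamma'$ is a stable genus-zero graph. Now take $W=(v_1,\dots,v_k)$ to be any enumeration of $V_\alpha$; note $k=\abs{V_\alpha}\ge1$ since $\val(\alpha)\ge c$. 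Define $\phi$ by: for each $i$, if some $p\in N[v_i]\sqcup M$ has $h_{\alpha\to p}\in Q_{v_i}\setminus\{q_{v_i},q_{v_i,\infty}\}$ and $p\notin\{v_1,\dots,v_{i-1}\}$, set $\phi(v_i):=p$; otherwise set $\phi(v_i):=v_i$. Then \eqref{it:phi_neighbor} holds in either branch, and \eqref{it:phi_sorted} holds since in the first branch $\phi(v_i)\ne v_j$ for $j<i$ is imposed while in the fallback $\phi(v_i)=v_i$ and the $v_i$ are distinct. For \eqref{it:phi_overdet} one just observes $Q_W=\bigcup_i Q_{v_i}\subseteq\HE(\Gamma,\alpha)$, so $\abs{Q_W}\le\val(\alpha)$ and hence $\abs{Q_W}-c\le\val(\alpha)-c<\abs{V_\alpha}=k$, which is exactly the claimed bound in both cases. (The reason for preferring $\phi(v_i)\ne v_i$ will be for Theorem \ref{thm:IntersectExplicitHypersurfaces}\eqref{it:NoIntersectionAtBoundary}, where one wants the coefficient of $l_{v_i,\boldu,\Gamma}$ indexed by $\phi(v_i)$ to involve a $\boldu$-variable not occurring in $l_{v_j,\boldu,\Gamma}$ for $j<i$; the fallback is irrelevant to the statement as phrased.)

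The crux is \eqref{it:phi_q_distinct}, i.e.\ $h_{\alpha\to\phi(v_i)}\ne h_{\alpha\to p_\infty}$. In the first branch this is built in ($h_{\alpha\to\phi(v_i)}\in Q_{v_i}\setminus\{q_{v_i,\infty}\}$), so everything reduces to the fallback branch, where the claim is $q_v\ne q_{v,\infty}$, that is $h_{\alpha\to v}\ne h_{\alpha\to p_\infty}$, for every $v\in V_\alpha$. This purely combinatorial inequality about $\Gamma$ and stabilization is the one genuinely non-formal point, and the step I expect to take the most care.

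To prove $h_{\alpha\to v}\ne h_{\alpha\to p_\infty}$, I would argue by contradiction: assuming equality, let $\delta\in V(\Gamma)$ be the meet in the tree $\Gamma$, relative to the base vertex $\alpha$, of the components carrying $v$ and $p_\infty$ (equivalently, the last vertex on both the $\alpha$-to-$v$ and $\alpha$-to-$p_\infty$ paths). Since those paths share their first edge, $\delta\ne\alpha$. After forgetting all marks outside $N[v]\sqcup M$, the vertex $\delta$ still has three pairwise distinct incident directions each carrying a remembered mark: towards $v$ (which lies in $N[v]$), towards $p_\infty$ (which lies in $M$), and along the edge towards $\alpha$ (because $\alpha$ survives stabilization, so it carries a remembered mark in some direction other than the one pointing at $\delta$). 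Hence $\delta$ has valence $\ge3$ and is not contracted; but $v$ sits at a vertex lying weakly beyond $\delta$ away from $\alpha$, so under stabilization it cannot migrate past $\delta$ to $\alpha$, contradicting $\alpha_v=\alpha$. This yields $q_v\ne q_{v,\infty}$ and completes the proof. (The hypothesis that some $\alpha_M$ separates all of $M$ is used only to make the dichotomy \ref{it:AlphaM}/\ref{it:NotAlphaM} meaningful; it plays no further role here.)
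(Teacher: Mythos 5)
Your construction does prove the lemma exactly as printed, and your verification of the one nontrivial distinctness fact --- that $q_v\ne h_{\alpha\to p_\infty}$ for $v\in V_\alpha$, via the meet vertex $\delta$ of the $\alpha$-to-$v$ and $\alpha$-to-$p_\infty$ paths --- is correct and in fact more detailed than the paper, which only asserts that the half-edges $(q_v)_{v\in V_\alpha}$ and $(h_{\alpha\to p})_{p\in M}$ are pairwise distinct. The problem is that the printed statement is weaker than what the paper's proof establishes and what the downstream application consumes, and your fallback $\phi(v_i)=v_i$ lands squarely in that gap. (Symptomatically, the literal statement is already satisfied by the empty sequence $W=()$.) In the proof of Theorem \ref{thm:IntersectExplicitHypersurfaces}\eqref{it:NoIntersectionAtBoundary} one solves the equation \eqref{eqn:kapranov_morphism_restriction_gen_stratum} of $l_{v_{s+1},U,\Gamma}$ for the coordinate $u_{v_{s+1},\phi(v_{s+1})}$, whose coefficient is $x_{h_{\alpha\to\phi(v_{s+1})}}$; this requires $\phi(v_{s+1})\notin\{v_{s+1},p_\infty\}$ (so that $u_{v_{s+1},\phi(v_{s+1})}$ is an actual coordinate of $\C^{\#\calA(G;m)}$) and $h_{\alpha\to\phi(v_{s+1})}\notin\{q_{v_{s+1}},h_{\alpha\to p_\infty}\}$ (so that $x_{h_{\alpha\to\phi(v_{s+1})}}$ is an actual homogeneous coordinate appearing in the equation). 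The paper's constructions all secure these extra properties; with $\phi(v_i)=v_i$ both fail, since there is no hyperplane $z_{v_i}=z_{v_i}$ in $\calA(G;m)$ and $x_{q_{v_i}}$ is excluded from \eqref{eqn:kapranov_morphism_restriction_gen_stratum}.

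Moreover the fallback is not a removable convenience: it can be forced. Let $G'$ be the graph obtained from $G$ by deleting the vertices $w$ with $h_{\alpha\to w}=h_{\alpha\to p_\infty}$. If some connected component $C$ of $G'$ has all its vertices in $V_\alpha$ and all of $M$ lies in the $p_\infty$-direction from $\alpha$, then for $w\in V(C)$ every value of $\phi(w)$ compatible with \eqref{it:phi_neighbor} and \eqref{it:phi_q_distinct}, other than $w$ itself, is a $G$-neighbor of $w$ inside $V(C)$; a map $V(C)\to V(C)$ sending each $v_i$ into $\{v_{i+1},\ldots,v_k\}$ cannot be defined on all of $V(C)$, so some vertex must be dropped from $W$. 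This is exactly why the paper's proof passes to spanning forests of $G'$ rooted outside $V_\alpha$ when possible, and otherwise shrinks $W$ to $V(C)\setminus\{r\}$ (resp.\ $V(C)$) and re-verifies \eqref{it:phi_overdet} with the finer bound $\abs{Q_W}\le k+2$ coming from $N[v_i]\subseteq V(C)$ --- your one-line count $\abs{Q_W}\le\val(\alpha)$ no longer suffices once $W\subsetneq V_\alpha$. In short: you have proved the lemma as phrased, but the combinatorial case analysis you bypassed is precisely the content that makes the lemma usable in Theorem \ref{thm:IntersectExplicitHypersurfaces}, so relative to the intended statement this is a genuine gap.
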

\begin{proof}
    Fix $\alpha$ as in the statement of the Lemma.

Note that by the discussion in Fact \ref{fact:PsiRestrict}, the half-edges $(q_v)_{v\in V_\alpha}$ and $h_{\alpha\to p}$ for $p\in M$ are all distinct.

    \smallskip
    
    \noindent\textit{Case \ref{it:AlphaM}.} Suppose $\alpha=\alpha_M$. In this case $h_{\alpha\to a} \not= h_{\alpha\to b}$ for $a \not= b \in M$. Let $W$ be any ordering of $V_\alpha$, and for each $v\in V_\alpha,$ let $\phi(v)$ be any element of $M$ such that $h_{\alpha\to\phi(v)} \not\in\{q_v,h_{\alpha\to\infty}\}$. (This is possible since $\{h_{\alpha\to a}:a\in M\}$ has at least 3 elements.) Then \eqref{it:phi_neighbor}, \eqref{it:phi_q_distinct}, and \eqref{it:phi_sorted} hold immediately. Note that $\val(\alpha)\ge\abs{Q_W}$ by definition. Thus $$k=\abs{V_\alpha}>\val(\alpha)-m\ge\abs{Q_W}-m,$$ so \eqref{it:phi_overdet} holds.
    \smallskip

\noindent\textit{Case \ref{it:NotAlphaM}.} Suppose $\alpha\ne\alpha_M.$ Let $G'$ be the graph obtained from $G$ by deleting all vertices $w\in V$ with $h_{\alpha\to w}=h_{\alpha\to\infty}$.  (Note that $V_\alpha\subseteq V(G').$)

    Observe that disconnecting $\Gamma$ at $\alpha$ induces a partition of the set $M$. We treat separately the following three cases:
    \begin{enumerate}[label=\textit{Subcase (\alph*):},leftmargin=2\parindent,align=left]
        \item Every connected component of $G'$ contains a vertex not in $V_\alpha$.
        \item There exists a connected component of $G'$ all of whose vertices are in $V_\alpha$, and $\alpha$ induces the trivial partition of $M$ (with one part).
        \item There exists a connected component of $G'$ all of whose vertices are in $V_\alpha$, and $\alpha$ induces a nontrivial partition $M$.
    \end{enumerate}

    \medskip
    
    \noindent\textit{Subcase (a).} Suppose every connected component of $G'$ contains a vertex not in $V_\alpha.$ Let $T$ be a spanning forest of $G'$. For each connected component $C$ of $G'$, we choose a root of $T$ not in $V_\alpha$. This induces an orientation on $T$, with roots as sinks, as well as a partial order on the vertices of $G$, with the roots as the maximal elements. Let $W$ be any ordering of $V_\alpha$ that extends this partial order. For $w\in V_\alpha$, define $\phi(w)$ to be the unique out-neighbor of $w$ in $T$ with the above orientation, which is well-defined since $w$ is not a root of $T$. Then \eqref{it:phi_neighbor}, \eqref{it:phi_q_distinct}, and \eqref{it:phi_sorted} all hold by construction. Again,  $\val(\alpha)\ge\abs{Q_W}$ by definition. Thus $$k=\abs{V_\alpha}>\val(\alpha)-3\ge\abs{Q_W}-3,$$ so \eqref{it:phi_overdet} holds.

    \medskip

    \noindent\textit{Subcase (b).} Suppose there exists a connected component $C$ of $G'$ whose vertices are all in $V_\alpha,$ and disconnecting $\Gamma$ at $\alpha$ induces the trivial partition of $M$, i.e. $h_{\alpha,m_i}=h_{\alpha,p_\infty}$ for all $m_i\in M$. Choose a spanning tree $T$ of $C$ and a root vertex $r$ in $C$, which induces an orientation on $T$ and partial order on $V(C)$ as above. Let $W$ be any ordering of $V(C)\setminus\{r\}$ extending the above partial order. For $w\in V(C)\setminus\{r\}$, define $\phi(w)\in V(C)$ to be the unique out-neighbor of $w$ in the orientation on $T$. Then \eqref{it:phi_neighbor}, \eqref{it:phi_q_distinct}, and \eqref{it:phi_sorted} all hold by construction, while \eqref{it:phi_overdet} follows from 
    \begin{align*}
        \abs{Q_W}=\abs{\{q_v:v\in V(C)\}\cup\{h_{\alpha\to\infty}\}}\leq (\abs{V(C)}-1)+2&=k+2.
    \end{align*}
    (The first equality is by definition of $Q_W$, and the fact that $N[v_i]\subseteq V(C)$.)

    \medskip

    \noindent\textit{Subcase (c).} Suppose there exists a connected component $C$ of $G'$ whose vertices are all in $V_\alpha,$ and disconnecting $\Gamma$ at $\alpha$ induces a nontrivial partition of $M$. The existence of the vertex $\alpha_M\ne\alpha$ separating all elements of $M$ implies that the induced partition of $M$ has exactly two parts $M=M_1\sqcup M_2$, one of which has a single element. Assume without loss of generality $p_\infty\in M_2$. Let $m_i\in M_1.$
    
    Choose a spanning tree $T$ of $C$ and a root vertex $r$ in $C$, which induces an orientation on $T$ and partial order on $V(C)$ as above. Let $W$ be any ordering of $V(C)\subseteq V_\alpha$ extending this partial order. For $w\in V(C)\setminus\{r\}$, define $\phi(w)\in V(C)$ to be the unique out-neighbor of $w$ in the orientation on $T$, and define $\phi(r)=m_i$. Then \eqref{it:phi_neighbor}, \eqref{it:phi_q_distinct}, and \eqref{it:phi_sorted} all hold by construction, while \eqref{it:phi_overdet} follows from the observation $$\abs{Q_W}=\abs{\{q_v:v\in V(C)\}\cup\{h_{\alpha\to m_i},h_{\alpha\to\infty}\}} \leq \abs{V(C)}+2=k+2.$$ 
    (Again, the first equality is by definition of $Q_W$, and the fact that $N[v_i]\subseteq V(C)$.)
\end{proof}

\begin{proof}[Proof of Theorem \ref{thm:IntersectExplicitHypersurfaces}]
    We have $[L_{v,\boldu}]=(\abs{\psi_v}\circ\pi_{N[v]\cup M})^*([l_{v,\boldu}])$ by the definition of the pullback of a divisor. On the other hand, $$(\abs{\psi_v}\circ\pi_{N[v]\cup M})^*([l_{v,\boldu}])=\pi_{N[v]\cup M}^*(\abs{\psi_v}^*([l_{v,\boldu}]))=\pi_{N[v]\cup M}^*\psi_v,$$ so \eqref{it:HypersurfaceRepresentsClass} holds.

    Let $\iota:\C^V\setminus\calA_{\C}(K_V;m)\to\Mbar_{0,V\sqcup M}$ be the inclusion induced by \eqref{eq:IdentifyCVWithFiber}. The pullback of the equation \eqref{eq:HyperplaneEquation} along the composition $\abs{\psi_v}\circ\pi_{N[v]\sqcup M}\circ\iota$ is precisely the logarithmic derivative \eqref{eqn:log_diff}, proving \eqref{it:HypersurfaceLogDerivative}.

It remains to show \eqref{it:NoIntersectionAtBoundary}. It is sufficient to  show that for generic $\boldu\in\C^{\calA(G;m)},$ 
    \begin{align}\label{eq:EmptyIntersectionGoal}
        \M_\Gamma\cap\pi_M^{-1}(y)\cap\bigcap_{v\in V}L_{v,\boldu}=\emptyset
    \end{align} for every stable $(V\sqcup M)$-marked genus-zero weighted graph $\Gamma$ with $\abs{V(\Gamma)}\ge2,$ where $\M_\Gamma \subset \Mbar_{0,V \sqcup M}$ is the corresponding locally closed boundary stratum. (This is because these strata cover $\Mbar_{0,V \sqcup M}\setminus\M_{0,V\sqcup M}$.) Let $\Gamma$ be a stable $(V\sqcup M)$-marked genus-zero weighted graph with $\abs{V(\Gamma)}\ge2$.

    First, observe that if there is no vertex of $\Gamma$ that separates all elements of $M$, then $\pi_M^{-1}(y)\cap\M_\Gamma$ is already empty, since in this case $\pi_M(\M_\Gamma)\subseteq\Mbar_{0,M}\setminus\M_{0,M}$, while $y\in\M_{0,M}$ by definition. Therefore we assume there exists $\alpha_M\in V(\Gamma)$ that separates all elements of $M$.

    Note that $V=\bigsqcup_{\alpha\in V(\Gamma)}V_\alpha$. Note also that since $\Gamma$ is a $(V\sqcup M)$-marked tree, we have $$\sum_{\alpha\in V(\Gamma)}\val(\alpha)=2(\abs{V(\Gamma)}-1)+\abs{V}+m.$$ We therefore compute:
    \begin{align*}
        (\abs{V_{\alpha_M}}-\val(\alpha_M)+m)+\sum_{\alpha \in V(\Gamma)\setminus\{\alpha_M\}} (\abs{V_{\alpha}}-\val(\alpha)+3) &= \abs{V}-\left(\sum_{\alpha\in V(\alpha)}\val(\alpha)\right)+m+3(\abs{V(\Gamma)}-1)\\
        &=-2(\abs{V(\Gamma)}-1)+3(\abs{V(\Gamma)}-1)\\
        &=\abs{V(\Gamma)}-1\\
        &>0.
    \end{align*}
    Thus there exists $\alpha\in V(\Gamma)$ such that $$\abs{V_{\alpha_M}}>\begin{cases}
        \val(\alpha)-m&\text{if } \alpha=\alpha_M,\\
        \val(\alpha_M)-3&\text{if } \alpha\ne\alpha_M.
    \end{cases}$$
    Fix such an $\alpha.$ By Lemma~\ref{lem:combinatorial_assignment}, there exists a finite sequence $W=(v_1,\ldots,v_k)$ and a function $\phi:\{v_1,\ldots,v_k\}\to V\sqcup M$ satisfying conditions \eqref{it:phi_neighbor}-\eqref{it:phi_overdet} of Lemma~\ref{lem:combinatorial_assignment}. We treat the cases $\alpha=\alpha_M$ and $\alpha\ne\alpha_M$ separately.

\medskip

    \noindent\textit{Case \ref{it:AlphaM}.} Suppose $\alpha=\alpha_M.$ Then we have a natural injection $M\into Q_W$ by $p\mapsto h_{\alpha\to p}$, giving a forgetful map $\pi:\M_{0,Q_W}\to\Mbar_{0,M}.$ We therefore have a commutative diagram
    \begin{align*}
        \begin{tikzcd}[ampersand replacement=\&]
            \M_\Gamma\arrow[d,"\pr_\alpha"]\arrow[r,"\iota_\Gamma",hook]\&\Mbar_{0,V\sqcup M}\arrow[dd,"\pi_M"]\\
            \M_{0,\HE(\Gamma,\alpha)}\arrow[d,"\pi_{Q_W}"]\&\\
            \M_{0,Q_W}\arrow[r,"\pi"]\&\Mbar_{0,M}
        \end{tikzcd}
    \end{align*}
    In particular, $\pi_M^{-1}(y)\cap\M_\Gamma\subseteq\M_\Gamma$ can be written as a preimage under the map $\pi_{Q_W}\circ\pr_{\alpha}:\M_\Gamma\to\M_{0,Q_W}$: specifically, $$\pi_M^{-1}(y)\cap\M_\Gamma=(\pi_{Q_W}\circ\pr_{\alpha})^{-1}(\pi^{-1}(y)).$$ By Lemma \ref{lem:kapranov_morphism_restriction_gen_stratum}\eqref{it:EquationOfRestrictionOfLvu}, for all $i=1,\ldots,k$, $L_{v_i,\boldu}\cap\M_\Gamma$ can also be written as a preimage under this same map $\pi_{Q_W}\circ\pr_{\alpha}:\M_\Gamma\to\M_{0,Q_W}$:  specifically, $$L_{v_i,\boldu}\cap\M_\Gamma=(\pi_{Q_W}\circ\pr_{\alpha})^{-1}((\abs{\psi_{q_{v_i}}}\circ\pi_{Q_{v_i}})^{-1}(l_{{v_i},\boldu,\Gamma})).$$
    Therefore, to prove \eqref{eq:EmptyIntersectionGoal} in this case, it is sufficient to show that for generic $\boldu\in\C^{\calA(G;m)},$ the intersection \begin{align}\label{eq:AlphaMIntersectionOnQW}
        \pi^{-1}(y)\cap\bigcap_{i=1}^{k} (\abs{\psi_{q_{v_i}}}\circ\pi_{Q_{v_i}})^{-1}(l_{{v_i},\boldu,\Gamma})\subseteq\M_{0,Q_W}
    \end{align} is empty. We will prove this inductively; specifically, we will show that for $0\le s\le k$, and for general $\boldu$, the intersection $\pi^{-1}(y)\cap\bigcap_{i=1}^s(\abs{\psi_{q_{v_i}}}\circ\pi_{Q_{v_i}})^{-1}(l_{{v_i},\boldu,\Gamma})$ has codimension at least $m-3+s$ in $\M_{0,Q_W}.$ Since $k>\abs{Q_W}-m$ by condition \eqref{it:phi_overdet} of Lemma \ref{lem:combinatorial_assignment}, this will imply that the intersection \eqref{eq:AlphaMIntersectionOnQW} has codimension at least $$m-3+k>m-3+\abs{Q_W}-m=\abs{Q_W}-3=\dim(\M_{0,Q_W})$$ in $\M_{0,Q_W}$, hence is empty.
    
    The base case $s=0$ is immediate from flatness of $\pi$, see Notation \ref{not:BasicMgnNotation}. For the inductive step, suppose there exists a dense open subset $U\subseteq\C^{\calA(G;m)}$ such that for $\boldu\in U,$ the intersection \begin{align}\label{eq:IntersectionUpToS}
        \pi^{-1}(y)\cap\bigcap_{i=1}^s(\abs{\psi_{q_{v_i}}}\circ\pi_{Q_{v_i}})^{-1}(l_{{v_i},\boldu,\Gamma})\subseteq\M_{0,Q_W}
    \end{align} has codimension at least $m-3+s$. We may equivalently state this in terms of the ``universal'' intersection over $U$. That is, define $$l_{{v},U,\Gamma}=\{(x,\boldu):\boldu\in U,x\in l_{v,u,\Gamma}\}\subset\P^{\abs{Q_{v}}-3}\times U$$ and let 
    \begin{align*}
        Z_s:&=\{(x,\boldu):\boldu\in U,x\in\pi^{-1}(y)\cap\bigcap_{i=1}^s(\abs{\psi_{q_{v_i}}}\circ\pi_{Q_{v_i}})^{-1}(l_{{v_i},\boldu,\Gamma})\}\\
        &=(\pi^{-1}(y)\times U)\cap\bigcap_{i=1}^s(\abs{\psi_{q_{v_i}}}\circ\pi_{Q_{v_i}})^{-1}(l_{{v_i},U,\Gamma})\subseteq\M_{0,Q_W}\times U.
    \end{align*} 
    By construction of $U$, $Z_s$ has codimension at least $m-3+s$ in $\M_{0,Q_W}\times U.$ Note that $l_{{v},U,\Gamma}\subset\P^{\abs{Q_v}-3}\times U$ is still cut out by the equation \eqref{eqn:kapranov_morphism_restriction_gen_stratum}, where the variables $u_{v,p}$ are considered as functions on $U$. 
    
    By Lemma \ref{lem:combinatorial_assignment} condition \eqref{it:phi_sorted}, we have $\phi(v_{s+1})\not\in\{v_1,\ldots,v_s\}$, and therefore the variable $u_{v_{s+1},\phi(v_{s+1})}$ does not appear in any of the equations \eqref{eqn:kapranov_morphism_restriction_gen_stratum} for the hyperplanes $l_{v_i,U,\Gamma}$ for $i=1,\ldots,s$. That is, $Z_s$ is the preimage of a subvariety $\underline{Z_s}\subseteq\M_{0,Q_W}\times\C^{\calA(G;m)\setminus\{(v_{s+1},\phi(v_{s+1}))\}}$, of codimension at least $m-3+s,$ under the map $$\M_{0,Q_W}\times U\to\M_{0,Q_W}\times\C^{\calA(G;m)\setminus\{(v_{s+1},\phi(v_{s+1}))\}}$$ that projects along the $u_{v_{s+1},\phi(v_{s+1})}$-direction.
    
    On the other hand, by \eqref{eqn:kapranov_morphism_restriction_gen_stratum}, $l_{v_{s+1},U,\Gamma}$ is the graph of the rational function $$g:\P^{\abs{Q_{v_{s+1}}}-3}\times\C^{\calA(G;m)\setminus\{(v_{s+1},\phi(v_{s+1}))\}}\dashrightarrow\A^1$$ defined by $$g([x_q],(u_{H}))=-\frac{1}{x_{h_{\alpha\to\phi(v_{s+1})}}}\sum_{\substack{q\in Q_v\setminus\{q_{v_{s+1}},h_{\alpha\to p_\infty}\}}}x_q\sum_{\substack{p\in N[v_{s+1}]\sqcup M\\h_{\alpha\to p=q}\\p\ne\phi(v_{s+1})}}u_{v_{s+1},p}.$$ (Note that the codomain $\A^1$ of $g$ is identified with the coordinate direction corresponding to the hyperplane $(v_{s+1},\phi(v_{s+1}))$.) Note that in writing $x_{h_{\alpha\to\phi(v_{s+1})}}$, we have used $h_{\alpha\to\phi(v_{s+1})}\not\in\{q_{v_{s+1}},h_{\alpha\to p_{\infty}}\}.$ This follows from conditions \eqref{it:phi_neighbor} and \eqref{it:phi_q_distinct} of Lemma \ref{lem:combinatorial_assignment}, and the fact that $v_{s+1}\in V_\alpha$.
    
    Since $l_{v_{s+1},U,\Gamma}$ is the graph of a rational function on $\P^{\abs{Q_{v_{s+1}}}-3}\times\C^{\calA(G;m)\setminus\{(v_{s+1},\phi(v_{s+1}))\}},$ and $$\abs{\psi_{q_{v_{s+1}}}}\circ\pi_{Q_{v_{s+1}}}:\M_{0,Q_W}\to\P^{\abs{Q_{v_{s+1}}}-3}$$ is dominant, we have that $(\abs{\psi_{q_{v_i}}}\circ\pi_{Q_{v_i}})^{-1}(l_{v_{s+1},U,\Gamma})$ is the graph of a rational function $$\tilde g:\M_{0,Q_W}\times\C^{\calA(G;m)\setminus\{(v_{s+1},\phi(v_{s+1}))\}}\dashrightarrow\A^1.$$ Thus $Z_{s+1}=Z_s\cap(\abs{\psi_{q_{v_i}}}\circ\pi_{Q_{v_i}})^{-1}(l_{v_{s+1},U,\Gamma})$ is the graph of the rational function $\tilde g|_{\underline{Z_s}}$. Hence we have a natural inclusion $Z_{s+1}\into\underline{Z_{s}}$. Thus $\dim(Z_{s+1})\le\dim(\underline{Z_s})=\dim(Z_s)-1,$ so the codimension of $Z_{s+1}\subseteq\M_{0,Q_W}\times U$ is at least $m-3+s+1$. Thus for generic $\boldu\in U,$ \eqref{eq:IntersectionUpToS} has codimension at least $m-3+s+1$ in $\M_{0,Q_W}$.

    Since $k>\abs{Q_W}-m$, we conclude by induction that \eqref{eq:AlphaMIntersectionOnQW} is empty, and hence that \eqref{eq:EmptyIntersectionGoal} is empty. Since this holds for all $\Gamma,$ we conclude part \eqref{it:NoIntersectionAtBoundary} of the Theorem.

\medskip

    \noindent\textit{Case \ref{it:NotAlphaM}} The argument when $\alpha\ne\alpha_M$ is essentially identical --- one simply removes $\pi^{-1}(y)$ from the intersection \eqref{eq:AlphaMIntersectionOnQW} and replaces the codimension $m-3+s$ with $s$ throughout.
    \end{proof}

Finally, we show that Theorem \ref{thm:IntersectExplicitHypersurfaces} implies Theorem \ref{thm:critical_point_interpretation}.

\medskip

\noindent\textbf{Theorem \ref{thm:critical_point_interpretation}.}
    For general $\boldu \in \C^{\# \calA(G;m)}$, $F_{\calA(G;m), \boldu}$ has exactly $\omega_{G,0,m}$ critical points.

\begin{proof}
    Let $\boldu \in \C^{\#\calA(G;m)}$ be general, and let $L_{v,\boldu}$ be as in Theorem \ref{thm:IntersectExplicitHypersurfaces}. By Lemma \ref{lem:CritAvoidsKV}, the critical locus $\Crit(F_{\calA(G;m),\boldu})$ is contained in $\C^V\setminus\calA_{\C}(K_V;m)$. By definition, we have $$\Crit(F_{\calA(G;m),\boldu})=\bigcap_{v\in V}Z\left(\frac{\partial \log F_{\calA(G;m), \boldu}}{\partial z_v}\right)\subseteq\C^V\setminus\calA_{\C}(K_V;m).$$ By Theorem \ref{thm:IntersectExplicitHypersurfaces}\eqref{it:HypersurfaceLogDerivative}, the isomorphism \eqref{eq:IdentifyCVWithFiber} restricts to an isomorphism $$\Crit(F_{\calA(G;m),\boldu})\cong\pi_M^{-1}(y)\cap \M_{0, V\sqcup M}\cap\bigcap_{v\in V}L_{v,\boldu}\subseteq\Mbar_{0,V\sqcup M}.$$ By Theorem \ref{thm:IntersectExplicitHypersurfaces}\eqref{it:NoIntersectionAtBoundary}, the subscheme $\pi_M^{-1}(y)\cap\bigcap_{v\in V}L_{v,\boldu}$ in contained in $\M_{0,V\sqcup M}\subset\Mbar_{0,V\sqcup M}$, i.e. we have
    \begin{align}\label{eq:CritAsIntersectionInMbar}
        \Crit(F_{\calA(G;m),\boldu})\cong\pi_M^{-1}(y)\cap\bigcap_{v\in V}L_{v,\boldu}\subseteq\Mbar_{0,V\sqcup M}.
    \end{align}
    The intersection in \eqref{eq:CritAsIntersectionInMbar} has expected dimension zero and consists of finitely many reduced points by Theorem \ref{thm:varschenko}\eqref{it:FinitelyManyCriticalPoints}. It follows that the intersection in \eqref{eq:CritAsIntersectionInMbar} is transverse, and hence $\Crit(F_{\calA(G;m),\boldu})$ represents the intersection product $[\pi_M^{-1}(y)]\cdot\prod_{v\in V}[L_{v,\boldu}]\in H^*(\Mbar_{0,V\sqcup M})$ as a cycle. That is,
    \begin{align}\label{eq:CritIntersectionNumber}
        \#\Crit(F_{\calA(G;m),\boldu})=\int_{\Mbar_{0,V\sqcup M}}[\pi_M^{-1}(y)]\cdot\prod_{v\in V}[L_{v,\boldu}].
    \end{align}
    
    By Theorem \ref{thm:IntersectExplicitHypersurfaces}\eqref{it:HypersurfaceRepresentsClass}, we have $[L_{v,\boldu}]=\pi_{N[v]\cup M}^*(\psi_v)$. We also have $[\pi_M^{-1}(y)]=\pi_M^*([pt]);$ this follows from the fact that $y$ is a smooth point of $\Mbar_{0,M}$ and the fact that $\pi_M^{-1}(y)$ has the expected dimension. Substituting these into \eqref{eq:CritIntersectionNumber} completes the proof.
\end{proof}

\section{Polynomials associated to directed graphs}\label{sec:Digraphs}
In this section we prove Definition/Theorem \ref{defthm:DigraphPolynomials} and Proposition \ref{prop:SourceRecursion}.

\medskip

\noindent \textbf{Definition/Theorem \ref{defthm:DigraphPolynomials}.}
    There exist monic polynomials $\chi_G^{\mathrm{in}},\chi_G^{\mathrm{out}}\in\Z[x]$ of degree $\abs{V}$ such that, for $g\ge0$ and $m\ge0$ with $2g-2+m>0,$ we have
    \begin{align*}
        \omega_{G,g,m}^{\mathrm{in}}&=(-1)^{\abs{V}}\chi_G^{\mathrm{in}}(-(2g-2+m))\\
        \omega_{G,g,m}^{\mathrm{out}}&=(-1)^{\abs{V}}\chi_G^{\mathrm{out}}(-(2g-2+m)).\nonumber
    \end{align*}
    Furthermore, for the case $(g,m)=(1,0),$ we have
    \begin{align*}
        \omega_{G,1,0}^{\mathrm{in}}&=(-1)^{\abs{V}-1}\left.\deriv{}{x}\chi_G^{\mathrm{in}}(x)\right|_{x=0}&&\text{and}&
        \omega_{G,1,0}^{\mathrm{out}}&=(-1)^{\abs{V}-1}\left.\deriv{}{x}\chi_G^{\mathrm{out}}(x)\right|_{x=0}.
    \end{align*}

\medskip

\begin{proof}
    Since $\omega^{\mathrm{in}}_{G,g,m}$ and $\omega^{\mathrm{out}}_{G,g,m}$ are permuted by reversing the directions of all edges of $G$, by symmetry, it is sufficient to prove the Theorem for $\chi_G^{\mathrm{in}}.$ We do this first in the case $g=0$.
    
    The proof in genus zero is by induction, which works as follows. For a graph $G$ with at least one edge, we will reduce the statement of the Theorem to the statement for graphs with either fewer vertices than $G$ or fewer edges. As before, the bases cases are all of the edgeless directed graphs $G$ --- this case is immediate from (the base case of) Theorem \ref{thm:main}, taking $\chi_G^{\mathrm{in}}(x)=\chi_G^{\mathrm{out}}(x)=\chi_G(x)=x^{\abs{V}}$.
    
    For the inductive step, let $G$ be a directed graph with at least one edge, say $e=u\to w$. Then 
    \begin{align*}
        \omega^{\mathrm{in}}_{G,g,m}-\omega^{\mathrm{in}}_{G\setminus e,g,m}&=\int_{\Mbar_{g,V\sqcup M}}\pi_M^*([pt])\cdot(\pi_{N^{\mathrm{in}}[w]\sqcup M}^*(\psi_w)-\pi_{(N^{\mathrm{in}}[w]\setminus u)\sqcup M}^*(\psi_w))\cdot\prod_{v\in V\setminus w}\pi_{N^{\mathrm{in}}[w]\sqcup M}^*(\psi_v)\nonumber\\
        &=\int_{\Mbar_{g,V\sqcup M}}\pi_M^*([pt])\cdot(\pi_{N^{\mathrm{in}}[w]\sqcup M}^*(D_{uw}))\cdot\prod_{v\in V\setminus w}\pi_{N^{\mathrm{in}}[w]\sqcup M}^*(\psi_v).
    \end{align*}
    By Fact \ref{fact:BoundaryPullback}, we have $$\pi_{N^{\mathrm{in}}[w]\sqcup M}^*(D_{uw})=\sum_{\substack{J\subseteq V\setminus N^{\mathrm{in}}[w]}}\left(\begin{tikzpicture}[baseline=-0.65ex]
            \draw (0,0) -- (1,0);
            \draw (0,0) node {$\bullet$};
            \draw (1,0) node {$\bullet$};
            \draw (0,0)--++(135:.6);
            \draw (135:.6) node[left] {$u$};
            \draw (0,0)--++(180:.6);
            \draw (180:.6) node[left] {$w$};
            \foreach \x in {215,220,225,230,235} {
            \draw (0,0)--++(\x:.6);};
            \draw (225:.6) node[below left] {$J$};
            \foreach \th in {-60,-40,-20,0,20,40,60} {
            \draw (1,0)--++(\th:.6);
            };
            \draw (1.6,0) node[right] {$S\setminus(\{u,w\}\cup J)$};
            \filldraw[fill=white,thick] (0,0) circle(.25);
            \filldraw[fill=white,thick] (1,0) circle(.25);
            \draw (0,0) node {$0$};
            \draw (1,0) node {$g$};
        \end{tikzpicture}\right)$$
    That is, we have $$\omega^{\mathrm{in}}_{G,g,m}-\omega^{\mathrm{in}}_{G\setminus e,g,m}=\sum_{\substack{J\subseteq V\setminus N^{\mathrm{in}}[w]}}I_1(J,m)I_2(J,m),$$ where $I_1(J,m)$ is an integral over $\Mbar_{0,\{u,w\}\cup J \cup *}$, and $I_2(J,m)$ is an integral over $\Mbar_{g,(S\setminus(\{u,w\}\cup J)) \cup *}$. Observe that $I_1(J):=I_1(J,m)$ is actually independent of $m$. By dimension-counting, $I_1(J)$ is nonzero \emph{only} if there is a unique $j\in J\cup\{u\}$ such that $N^{\mathrm{in}}[j]\cap(J\cup\{u, w\})=\{j\}.$ By Fact \ref{fact:PsiRestrict}, we have $I_2(J,m)=\omega_{G'_J,g,m}$, where $G'_J$ is obtained from $G$ by deleting the vertices in $(J\cup\{u,w\})\setminus\{j\}$, and for any remaining vertex $v$, adding an edge $v\to j$ if (no such edge already exists and) there exists an edge from $v$ to an element of $(J\cup\{u,w\})$. Note that $G'_J$ has $\abs{V}-(\abs{J}+1)<\abs{V}$ vertices.

    We have now shown $$\omega^{\mathrm{in}}_{G,g,m}=\omega^{\mathrm{in}}_{G\setminus e,g,m}+\sum_{\substack{J\subseteq V\setminus N^{\mathrm{in}}[w]}}I_1(J)\omega_{G'_J,g,m}.$$
    By the inductive hypothesis, we may write 
    \begin{align*}
        \omega_{G\setminus e,g,m}&=(-1)^{\abs{V}}\chi_{G\setminus e}^{\mathrm{in}}(-(2g-2+m))\\
        \omega_{G'_J,g,m}&=(-1)^{\abs{V}-\abs{J}-1}\chi_{G'_J}^{\mathrm{in}}(-(2g-2+m)),
    \end{align*}
    where $\chi_{G\setminus e}^{\mathrm{in}}$ is a monic polynomial of degree $\abs{V}$ and $\chi_{G'_J}^{\mathrm{in}}$ is a polynomial of degree less than $\abs{V}$. Thus we define a monic polynomial of degree $\abs{V}$: $$\chi_{G}^{\mathrm{in}}:=\chi_{G\setminus e}^{\mathrm{in}}+\sum_{\substack{J\subseteq V\setminus N^{\mathrm{in}}[w]}}(-1)^{-\abs{J}-1}I_1(J)\chi_{G'_J}^{\mathrm{in}},$$ which satisfies
    \begin{align*}
        (-1)^{\abs{V}}\chi_{G}^{\mathrm{in}}(-(2g-2+m))&=(-1)^{\abs{V}}\chi_{G\setminus e}^{\mathrm{in}}(-(2g-2+m))+\sum_{\substack{J\subseteq V\setminus N^{\mathrm{in}}[w]}}(-1)^{\abs{V}-\abs{J}-1}I_1(J)\chi_{G'_J}^{\mathrm{in}}(-(2g-2+m))\\
        &=\omega_{G\setminus e}^{\mathrm{in}}+\sum_{\substack{J\subseteq V\setminus N^{\mathrm{in}}[w]}}I_1(J)\omega_{G'_J,g,m}^{\mathrm{in}}\\
        &=\omega_{G,g,m}^{\mathrm{in}}.
    \end{align*}
    This proves the $g=0$ case of Definition/Theorem \ref{defthm:DigraphPolynomials}.

Now suppose $g>0$ and $m\ge0$ with $(g,m)\ne(1,0)$. By the $g=0$ case, it suffices to show that $$\omega^{\mathrm{in}}_{G,g,m}=\omega^{\mathrm{in}}_{G,0,m+2g}\quad\quad\text{and}\quad\quad\omega^{\mathrm{out}}_{G,g,m}=\omega^{\mathrm{out}}_{G,0,m+2g}.$$ Let $\Gamma$ denote the $V\sqcup M$-marked genus-$g$ weighted stable graph that has a single weight-zero vertex (at which all marks are attached) and $g$ loops, and let $\Gamma'$ denote the $M$-marked graph obtained from $\Gamma$ be forgetting $V$. We have $\Mtilde_{\Gamma}=\Mbar_{0,V\sqcup M\sqcup Q},$ where $\abs{Q}=2g,$ giving the following commutative diagram:
    $$\begin{tikzcd}
        \Mbar_{0,V\sqcup M\sqcup Q}\arrow[r,"\iota_\Gamma"]\arrow[d,"\pi_{M\sqcup Q}"]&\Mbar_{g,V\sqcup M}\arrow[d,"\pi_M"]\\
        \Mbar_{0,M\sqcup Q}\arrow[r,"\iota_{\Gamma'}"]&\Mbar_{g,M}
    \end{tikzcd}$$
    Observe that by Fact \ref{fact:PsiRestrict}, we have $$\Psi_{G,0,m+2g}=\iota_\Gamma^*\Psi_{G,g,m}.$$
    Thus we have
    \begin{align}\label{eq:HigherGenusReduceToGenusZero}
        \omega^{\mathrm{in}}_{G,0,2g+m}&=\int_{\Mbar_{0,V\sqcup M\sqcup Q}}\Psi_{G,0,m+2g}\cdot\pi_{M\sqcup Q}^*([pt])\\
        &=\int_{\Mbar_{g,V\sqcup M}}(\iota_\Gamma)_*(\Psi_{G,0,m+2g}\cdot\pi_{M\sqcup Q}^*([pt]))\nonumber\\
        &=\int_{\Mbar_{g,V\sqcup M}}(\iota_\Gamma)_*(\iota_\Gamma^*\Psi_{G,g,m}\cdot\pi_{M\sqcup Q}^*([pt]))\nonumber\\
        &=\int_{\Mbar_{g,V\sqcup M}}\Psi_{G,g,m}\cdot(\iota_\Gamma)_*\pi_{M\sqcup Q}^*([pt]). \nonumber
    \end{align}
    Let $(C,\vec p)\in\Mbar_{0,M\sqcup Q}$ be a general point, and let $(C',{\vec p\,}')=\iota_{\Gamma'}(C,\vec p)$. Since $\pi_{M\sqcup Q}$ has reduced fibers (see the proof of Fact \ref{fact:BoundaryPullback}), $\pi_{M\sqcup Q}^{-1}(C,\vec p)$ is reduced, and furthermore $\pi_{M\sqcup Q}^{-1}(C,\vec p)$ is an iterated blowup of $C^{\abs{V}}.$ Similarly $\pi_{M}^{-1}(C',{\vec p\,}')$ is a reduced scheme\footnote{Here $\pi_{M}^{-1}(C',{\vec p\,}')$ means the fiber product $\{(C,p)\}\times_{\Mbar_{g,m}}\Mbar_{g,V\sqcup M}$, which is a scheme since $\pi_M$ is representable \cite{Knudsen1983}.}, and is an iterated blowup of $(C')^{\abs{V}}.$ Under these identifications, the restriction of $\iota_\Gamma$ to $\pi_{M\sqcup Q}^{-1}(C,\vec p)$ is, up to birational transformation, identified with the (power of the) gluing map $C^{\abs{V}}\to(C')^{\abs{V}}$. In particular, we have $$\pi_M^*([pt])=\pi_M^*(\iota_{\Gamma'})_*([pt])=\pi_M^*([(C',{\vec p\,}')])=[\pi_{M}^{-1}(C',{\vec p\,}')]=(\iota_{\Gamma})_*([\pi_{M\sqcup Q}^{-1}(C,\vec p)])=(\iota_{\Gamma})_*\pi_{M\sqcup Q}^*([pt]).$$
    Thus by \eqref{eq:HigherGenusReduceToGenusZero}, we have $$\omega^{\mathrm{in}}_{G,0,2g+m}=\int_{\Mbar_{g,V\sqcup M}}\Psi_{G,g,m}\cdot\pi_M^*([pt])=\omega_{G,g,m},$$ as desired. The case $(g,m)=(1,0)$ is essentially identical.
\end{proof}
\begin{rem}\label{rem:HigherGenusFromGenusZero}
    The second half of the above argument (i.e. the case $g>0$) also gives an alternative way to derive the case $g>0$ of Theorem \ref{thm:main} from the $g=0$ case.
\end{rem}
\medskip

\noindent\textbf{Proposition \ref{prop:SourceRecursion}}
    Let $G=(V,E)$ be a finite simple directed graph, let $v\in V$, and let $G\setminus\{v\}$ be the graph obtained by deleting $v$. If $v$ is a sink, then
    \begin{align*}
        \chi_G^{\mathrm{in}}(x)=(x-\indeg(v))\cdot\chi_{G\setminus v}^{\mathrm{in}}(x).
    \end{align*}
    If $v$ is a source, then
    \begin{align*}
        \chi_G^{\mathrm{out}}(x)=(x-\outdeg(v))\cdot\chi_{G\setminus v}^{\mathrm{out}}(x).
    \end{align*}
    Here $\indeg(v)$ and $\outdeg(v)$ denote the number of incoming and outgoing edges at $v$, respectively.

\medskip

\begin{proof}
    As in the proof of Definition/Theorem \ref{defthm:DigraphPolynomials}, it is sufficient to prove the statement for $\chi_{G}^{\mathrm{in}}.$ Let $w\in V$ be a sink. Then every factor in $\pi_M^*([pt])\cdot\prod_{v\in V}\pi_{N^{\mathrm{in}}[v]\sqcup M}^*(\psi_v)$ \emph{except} for $\pi_{N^{\mathrm{in}}[w]\sqcup M}^*(\psi_w)$ is pulled back from $\Mbar_{g,(V\setminus\{w\})\sqcup M}.$ By the projection formula, we have
    \begin{align*}
        \omega_{G,g,m}&=\int_{\Mbar_{g,V\sqcup M}}\Psi_{G,g,m}^{\mathrm{in}}\cdot\pi_M^*([pt])\\
        &=\int_{\Mbar_{g,(V\setminus\{w\})\sqcup M}}(\pi_{(V\setminus\{w\})\sqcup M})_*\pi_{N^{\mathrm{in}}[w]\sqcup M}^*(\psi_w)\cdot\Psi_{G\setminus w,g,m}^{\mathrm{in}}\cdot\pi_M^*([pt])
    \end{align*}
    By Fact \ref{fact:PsiPullback}, we have 
    \begin{align}\label{eq:SinkPsiPullback}
        \pi_{N^{\mathrm{in}}[w]\sqcup M}^*(\psi_w)=\psi_w\quad-\quad\sum_{\substack{J\subseteq S\setminus(N^{\mathrm{in}}[w]\sqcup M)\\J\ne\emptyset}}\quad
        {\begin{tikzpicture}[baseline=-0.65ex]
            \draw (0,0) -- (1,0);
            \draw (0,0)--++(135:.6);
            \draw (135:.6) node[left] {$w$};
            \foreach \x in {215,220,225,230,235} {
            \draw (0,0)--++(\x:.6);};
            \draw (225:.6) node[below left] {$J$};
            \foreach \th in {-60,-40,-20,0,20,40,60} {
            \draw (1,0)--++(\th:.6);
            };
            \draw (1.6,0) node[right] {$S\setminus(J\cup\{w\})$};
            \filldraw[fill=white,thick] (0,0) circle(.25);
            \filldraw[fill=white,thick] (1,0) circle(.25);
            \draw (0,0) node {$0$};
            \draw (1,0) node {$g$};
        \end{tikzpicture}}
    \end{align}
    We now apply $(\pi_{(V\setminus\{w\})\sqcup M})_*$ to the right side of \eqref{eq:SinkPsiPullback}. From the first term we get $$(\pi_{(V\setminus\{w\})\sqcup M})_*\psi_w=(2g-2+(\abs{V}-1)+m)\cdot[\Mbar_{g,(V\setminus\{w\})\sqcup M}],$$ see \cite[Rem. 2.2.1]{KockNotes}. (The factor comes from the degree of the relative cotangent bundle on the universal curve over $\Mbar_{g,(V\setminus\{w\})\sqcup M}$.) The boundary divisors in \eqref{eq:SinkPsiPullback} are contracted in dimension unless $\abs{J}=1,$ in which case $\pi_{(V\setminus\{w\})\sqcup M}$ is an isomorphism on each such boundary divisor. Thus the pushforward via $\pi_{(V\setminus\{w\})\sqcup M}$ of the sum in \eqref{eq:SinkPsiPullback} is $$(\abs{V}-\abs{N^{\mathrm{in}}[w]})\cdot[\Mbar_{g,(V\setminus\{w\})\sqcup M}].$$ We thus have
    \begin{align}\label{eq:SinkPushforwardFormula}
        \omega_{G,g,m}
        &=\int_{\Mbar_{g,(V\setminus\{w\})\sqcup M}}(\pi_{(V\setminus\{w\})\sqcup M})_*\pi_{N^{\mathrm{in}}[w]\sqcup M}^*(\psi_w)\cdot\Psi_{G\setminus w,g,m}^{\mathrm{in}}\cdot\pi_M^*([pt])\\
        &=(2g-3+m+\abs{N^{\mathrm{in}}[w]})\cdot[\Mbar_{g,(V\setminus\{w\})\sqcup M}]\cdot\Psi_{G\setminus w,g,m}^{\mathrm{in}}\cdot\pi_M^*([pt])\nonumber\\
        &=(2g-3+m+\abs{N^{\mathrm{in}}[w]})\cdot\omega_{G\setminus w,g,m}\nonumber\\
        &=(2g-2+m+\indeg(w))\cdot\omega_{G\setminus w,g,m}.\nonumber
    \end{align}
    By Definition/Theorem \ref{defthm:DigraphPolynomials} and \eqref{eq:SinkPushforwardFormula}, we have $$(-1)^{\abs{V}}\chi_G(-(2g-2+m))=(2g-2+m+\indeg(w))\cdot(-1)^{\abs{V}-1}\chi_{G\setminus w}(-(2g-2+m))$$ for all $g$ and $m$. Thus the polynomials $(-1)^{\abs{V}}\chi_G(x)$ and $(-x+\indeg(w))\cdot(-1)^{\abs{V}-1}\chi_G(x)$ agree at every negative integer, hence they coincide, i.e.: 
    \begin{align*}
        \chi_G(x)&=(x-\indeg(w))\chi_{G\setminus w}(x).\qedhere
    \end{align*}
\end{proof}

\bibliography{StableCurvesAndChromaticPolynomialsAccepted.bbl}
\bibliographystyle{amsalpha}

\end{document}